\documentclass[11pt]{article}
\usepackage{fullpage}
\usepackage{amsmath,amssymb,amsthm}
\usepackage[algo2e,linesnumbered,ruled]{algorithm2e}
\usepackage{graphicx}


\newtheorem{assumption}{Assumption}
\newtheorem{theorem}{Theorem}
\newtheorem{lemma}{Lemma}
\newtheorem{remark}{Remark}
\newtheorem{corollary}{Corollary}

\newcommand{\maximize}{\mathop{\textrm{maximize}}}
\newcommand{\minimize}{\mathop{\textrm{minimize}}}
\newcommand{\xk}{x^k}
\newcommand{\xke}{x^{k+1}}
\newcommand{\xki}{x^k_i}
\newcommand{\xkz}{x^k_0}
\newcommand{\xkt}{x^k_t}
\newcommand{\zki}{z^k_i}
\newcommand{\hxkie}{\hat{x}^{k}_{i+1}}
\newcommand{\xkie}{x^{k}_{i+1}}
\newcommand{\Dki}{\Delta_i^k}
\newcommand{\xkim}{x^{k}_{i-1}}
\newcommand{\Gki}{G_i^k}
\newcommand{\cGki}{\mathcal{G}(\xki)}
\newcommand{\tcGki}{\widetilde{\mathcal{G}}(\xki)}
\newcommand{\tcGkt}{\widetilde{\mathcal{G}}(\xkt)}

\newcommand{\tgk}{\tilde{g}^k}
\newcommand{\tJk}{\tilde{J}^k}
\newcommand{\tgi}{\tilde{g}_i}
\newcommand{\tJi}{\tilde{J}_i}
\newcommand{\tgki}{\tilde{g}^k_i}
\newcommand{\tJki}{\tilde{J}^k_i}
\newcommand{\tgkz}{\tilde{g}^k_0}
\newcommand{\tJkz}{\tilde{J}^k_0}
\newcommand{\tgkim}{\tilde{g}^k_{i-1}}
\newcommand{\tJkim}{\tilde{J}^k_{i-1}}
\newcommand{\Bki}{{\mathcal{B}^k_i}}
\newcommand{\Ski}{{\mathcal{S}^k_i}}
\newcommand{\Bkr}{{\mathcal{B}^k_r}}
\newcommand{\Skr}{{\mathcal{S}^k_r}}
\newcommand{\Bkz}{{\mathcal{B}^k_0}}
\newcommand{\Skz}{{\mathcal{S}^k_0}}
\newcommand{\R}{\mathbf{R}} 
\newcommand{\E}{\mathbf{E}}

\newcommand{\cB}{\mathcal{B}}

\newcommand{\cG}{\mathcal{G}}

\newcommand{\cS}{{\mathcal{S}}}
\newcommand{\cO}{{\mathcal{O}}}

\newcommand{\dom}{\mathrm{dom\,}}
\newcommand{\argmin}{\mathop{\rm argmin}}
\newcommand{\half}{\frac{1}{2}}

\title{Stochastic Variance-Reduced Prox-Linear Algorithms for Nonconvex Composite Optimization}

\author{Junyu Zhang\thanks{%
Department of Electrical Engineering, Princeton University, Princeton, NJ 08544, USA. Email: \texttt{junyuz@princeton.edu}}
\and Lin Xiao\thanks{%
Facebook AI Research (FAIR), Seattle, WA 98109, USA. 
Email: \texttt{linx@fb.com}}
}

\date{May 12, 2021}

\begin{document}
\maketitle 

\begin{abstract} 
    We consider the problem of minimizing composite functions of the form $f(g(x))+h(x)$, where~$f$ and~$h$ are convex functions (which can be nonsmooth) and $g$ is a smooth vector mapping. In addition, we assume that $g$ is the average of finite number of component mappings or the expectation over a family of random component mappings.
    We propose a class of stochastic variance-reduced prox-linear algorithms for solving such problems and bound their sample complexities for finding an $\epsilon$-stationary point in terms of the total number of evaluations of the component mappings and their Jacobians. 
    When~$g$ is a finite average of~$N$ components, we obtain sample complexity $\cO(N+ N^{4/5}\epsilon^{-1})$ for both mapping and Jacobian evaluations.
    When $g$ is a general expectation, we obtain sample complexities of $\cO(\epsilon^{-5/2})$ and $\cO(\epsilon^{-3/2})$ for component mappings and their Jacobians respectively. If in addition~$f$ is smooth, then improved sample complexities of $\cO(N+N^{1/2}\epsilon^{-1})$ and $\cO(\epsilon^{-3/2})$ are derived for $g$ being a finite average and a general expectation respectively, for both component mapping and Jacobian evaluations. 

\paragraph{Keywords:} stochastic composite optimization, nonsmooth optimization, variance reduction, proximal mapping, prox-linear algorithm, sample complexity.

\end{abstract}

\section{Introduction}
\label{sec:intro} 

We consider composite optimization problems of the form
\begin{equation}
\label{eqn:composite-opt}
\minimize_{x\in\R^n} \quad f(g(x)) + h(x), 
\end{equation}
where $f:\R^m\rightarrow\R$ is a convex and possibly nonsmooth function, $g:\R^n\rightarrow\R^m$ is a smooth mapping (vector-valued function), and $h:\R^n\rightarrow\R$ is a convex and lower-semicontinuous function. 
Although both $f$ and $h$ are convex, the problem is in general nonconvex due to the composition of~$f$ and~$g$. 
In addition, we assume that~$g$ is either the average of finite number of component mappings, i.e., $g(x) = \frac{1}{N}\sum_{i=1}^N g_i(x)$, 
or the expectation of a family of random component mappings, i.e.,
$g(x) = \E_\xi [g_\xi(x)]$ where~$\xi$ is a random variable.
More explicitly, we consider the problems
\begin{equation}
\label{eqn:composite-finite}
\minimize_{x\in\R^n} \quad f\biggl(\frac{1}{N}\sum_{i=1}^N g_i(x)\biggr) + h(x)
\end{equation}
and 
\begin{equation}
\label{eqn:composite-expect}
    \minimize_{x\in\R^n} \quad f\bigl(\E_\xi [g_\xi(x)]\bigr) + h(x).
\end{equation}
Clearly, problem~\eqref{eqn:composite-finite} is a special case of~\eqref{eqn:composite-expect} where the random variable~$\xi$ follows the uniform distribution over the finite set $\{1,2,\ldots,N\}$.
We consider them separately because the sample complexity for solving problem~\eqref{eqn:composite-finite} can be much lower than that of the general case~\eqref{eqn:composite-expect}.

An effective method for solving the composite optimization problem~\eqref{eqn:composite-opt} is the (deterministic) \emph{prox-linear} algorithm (e.g., \cite{Prox-Linear-Errbounds-Dmitriy-2018,Gauss-Newton-Nesterov}, which iteratively minimizes a model of the objective function where $g(x)$ is replaced by a linear approximation.
Specifically, let $g':\R^n\to\R^{m\times n}$ denote the Jacobian of~$g$, then
each iteration of prox-linear algorithm takes the form 
\begin{equation}
\label{eqn:prox-linear}
\xke = \argmin_x \left\{f\bigl(g(\xk) + g'(\xk)(x-\xk)\bigr) + h(x) + \frac{M}{2}\|x-\xk\|^2\right\},
\end{equation} 
where $M>0$ is a parameter to penalize the deviation of $\xke$ from~$\xk$ in squared Euclidean distance.
Since~$f$ and~$h$ are convex, the subproblem in~\eqref{eqn:prox-linear} is a convex optimization problem.
For the algorithm to be efficient in practice, we also need the functions~$f$ and~$h$ to be relatively simple, meaning that the subproblem in~\eqref{eqn:prox-linear} admits a closed-form solution or can be solved efficiently. 

For problems~\eqref{eqn:composite-finite} and~\eqref{eqn:composite-expect},
the finite-average and expectation structure of~$g$ allow us to use a randomly sampled subset of $g_i$ or $g_\xi$ and their Jacobians to approximate the expectations~$g$ and~$g'$.
Specifically, during each iteration~$k$, let $\cB^k$ and $\cS^k$ be two subsets of $\{1,2,\ldots,N\}$ sampled uniformly at random or two sets of realizations of $\xi$ sampled from its distribution.
A straightforward approach is to construct the mini-batch approximations
\begin{equation}\label{eqn:mini-batch}
    \tgk =\frac{1}{|\cB^k|}\sum_{i\in\cB^k} g_i(\xk), \qquad
    \tJk =\frac{1}{|\cS^k|}\sum_{i\in\cS^k} g'_i(\xk),
\end{equation}
and use them to replace $g(\xk)$ and $g'(\xk)$ in~\eqref{eqn:prox-linear}, leading to the \emph{stochastic prox-linear} algorithm:
\begin{equation}
\label{eqn:stoch-prox-linear}
\xke = \argmin_x \left\{f\bigl(\tgk + \tJk(x-\xk)\bigr) + h(x) + \frac{M}{2}\|x-\xk\|^2\right\}.
\end{equation} 
While each iteration of~\eqref{eqn:stoch-prox-linear} uses less samples of $g_\xi$ and $g'_\xi$ than the full-batch method~\eqref{eqn:prox-linear}, the simple mini-batch construction in~\eqref{eqn:mini-batch} may not be able to reduce the overall sample complexity due to increased number of iterations required
(see, e.g., \cite{DekelGSX2012} and \cite[Section~3]{NestedSpider}).

In this paper, we develop a class of stochastic \emph{variance-reduced} prox-linear algorithms for solving problems~\eqref{eqn:composite-finite} and~\eqref{eqn:composite-expect}.
By leveraging the variance reduction techniques of SVRG \cite{SVRG-TongZhang,SVRG-LinXiao} and SARAH/\textsc{Spider} \cite{SARAH-1,fang2018spider}, we obtain significantly lower sample complexities than that of the full-batch prox-linear method.
Before getting to the details, we first present several applications.

\subsection{Application examples} 
\label{sec:examples}

Composite optimization problems of the forms~\eqref{eqn:composite-finite} and~\eqref{eqn:composite-expect} arise from risk-averse optimization 
(e.g, \cite{Rockafellar2007CoherentRisk,Ruszczynski2013risk-averse} and a mean-variance tradeoff example in \cite{C-SAGA}) 
and stochastic variational inequalities 
(e.g., \cite{IusemJofre2017,KoshalNedicShanbhag2013},
through a reformulation in~\cite{GhadimiRuszWang2018}).
In machine learning, 
a well-known example is policy evaluation for reinforcement learning
(e.g., \cite{dann2014policy,sutton1998reinforcement,SCGD-M.Wang,ASC-PG-M.Wang}).
Here we give several additional examples, and explain how the stochastic prox-linear algorithms can be applied.

\paragraph{Systems of nonlinear equations for ERM}
Solving systems of nonlinear equations is one of the most fundamental problems in computational science and engineering (e.g., \cite{OrtegaRheinboldt1970}). 
Given a system of nonlinear equations $g(x)=0$ where $g:\R^n\to\R^m$ is a smooth mapping, a standard approach is to minimize the composite function $f(g(x))$ 
where~$f$ is non-negative merit function and $f(z)=0$ if only if $z=0$.
A popular choice is the squared Euclidean norm $f(\cdot)=\|\cdot\|^2$.
The classical Gauss-Newton method iteratively minimizes a simple model by 
linearizing~$g$ at $x^k$:
\[
    x^{k+1} = \argmin_{x} ~\bigl\|g(x^k)+g'(x^k)(x-x^k)\bigr\|^2.
\]
Nesterov \cite{Gauss-Newton-Nesterov} proposed a modified scheme with sharp merit functions such as $f(\cdot)=\|\cdot\|$ and a quadratic penalty term as in~\eqref{eqn:prox-linear}. 
For empirical risk minimization (ERM) problems of the form 
\[
    \minimize_x \quad F(x)\triangleq \frac{1}{N} \sum_{i=1}^N F_i(x),
\]
where each $F_i$ is twice differentiable, we can apply Gauss-Newton type of methods by letting $g_i(x)=F'_i(x)$ and $g'(x)=F''_i(x)$ (the gradient and Hessian of $F_i$ respectively) and use either a smooth or a sharp merit function~$f$. The resulting optimization problem is of the form~\eqref{eqn:composite-finite} and we can exploit the finite-average structure with the sub-sampled prox-linear algorithm~\eqref{eqn:stoch-prox-linear}. This approach can be particularly useful for solving non-convex ERM problems (see, e.g., \cite{Newton-MR} and \cite{Dingo}). Efficient numerical algorithms for solving the subproblem in each iteration are discussed in \cite{Newton-MR} for $f(\cdot)=\|\cdot\|^2$ and in \cite{Gauss-Newton-Nesterov} for $f(\cdot)=\|\cdot\|$.

\paragraph{Truncated stochastic gradient method}
Consider the stochastic optimization problem
\[
    \minimize_x \quad g(x) \triangleq \E\bigl[g_\xi(x)\bigr],
\]
where each $g_\xi:\R^n\to\R$ is smooth.
Suppose we know the minimum value $g^*=\inf_x g(x)$ or a lower bound of it (in many machine learning problems $g(x)\geq 0$), then the problem is equivalent to
\[
    \minimize_x \quad f(g(x)), \qquad\mbox{where}\quad f(z)=\max\{z,\, g^*\}.
\]
In this case, the mini-batch stochastic prox-linear method~\eqref{eqn:stoch-prox-linear} becomes
\begin{equation}\label{eqn:truncated-sg}
\xke = \argmin_x \left\{ \max\left\{\tgk + \tJk(x-\xk),~g^*\right\} + \frac{M}{2}\|x-\xk\|^2\right\},
\end{equation}
which has a closed-form solution
\[
    x_{k+1} = x_k - \min\left\{\frac{1}{M},\,\frac{\tgk-g^*}{\|\tJk\|^2}\right\}\cdot\tJk.
\]
This update has a very similar step-size rule as Polyak's rule for subgradient method \cite{Polyak87book}.
Because the simple model used in~\eqref{eqn:truncated-sg} truncates the linear model with the known lower bound, it is called the \emph{truncated stochastic gradient method}.
Recent studies \cite{AsiDuchi2019Truncated,AsiDuchi2019StoProxPoint,DamekDima2019ModelBased}
show that it converges faster and is more stable than the classical stochastic gradient method with a wide range of step sizes. 
In this paper, we use variance reduction techniques to construct the estimates
$\tgk$ and $\tJk$ and obtain better sample complexity for this method.

\paragraph{Minimax stochastic optimization}
Consider the problem of minimizing the maximum of $m$ expectations:
\[
    \minimize_{x\in\mathcal{X}}~ \max_{1\leq j\leq m} g^{(j)}(x), \qquad
    \mbox{where}\quad g^{(j)}(x)=\E_{\xi_j}\bigl[g^{(j)}_{\xi_j}(x)\bigr].
\]
Here we assume that $\mathcal{X}$ is a closed convex set and the random variables $\xi_i$ follow (slightly) different probability distributions. 
This is a special case of \emph{distributionally robust optimization}
(see \cite{RahimianMehrotra19DRO} and references therein), which has many applications in operations research and statistical machine learning.
It can be put into the form of~\eqref{eqn:composite-expect} with the definitions
$\xi=[\xi_1,\ldots,\xi_m]$ and
\[
    f(z)=\max_{1\leq j\leq m}z_j, \qquad
    g_{\xi}(x) = \bigl[g^{(1)}_{\xi_1}(x),\ldots,g^{(m)}_{\xi_m}(x)\bigr],\qquad
    h(x) = \delta_{\mathcal{X}}(x),
\]
where $\delta_{\mathcal{X}}$ denotes the indicator function of~$\mathcal{X}$.
In this case, the update in~\eqref{eqn:stoch-prox-linear} requires solving a convex quadratic programming problem.
Similar formulations may apply to other distributionally robust optimization problems.

\paragraph{Exact penalty method for stochastic optimization}
Consider the following constrained stochastic optimization problem
\begin{eqnarray*}
& \displaystyle\minimize_{x\in\mathcal{X}} & \E_{\xi_0}\bigl[g^{(0)}_{\xi_0}(x)\bigr] \\
&\mbox{subject to} & \E_{\xi_j}\bigl[g^{(j)}_{\xi_j}(x)\bigr] \geq 0, \quad j=1,\ldots, m_I,\\ 
& & \E_{\xi_j}\bigl[g^{(j)}_{\xi_j}(x)\bigr] = 0, \quad j=m_I+1,\ldots, m.
\end{eqnarray*}
Using the exact penalty approach (see, e.g., \cite{ExactPenalty-2,ExactPenalty-1}),  this problem can be reformulated as
\begin{align*}
    \minimize_x \quad & \E_{\xi_0}\bigl[g^{(0)}_{\xi_0}(x)\bigr] 
    + \sum_{j=1}^{m_I}c_j\max\left\{0,\,\E_{\xi_j}\bigl[g^{(j)}_{\xi_j}(x)\bigr]\right\} \\ 
    & + \sum_{j=m_I+1}^m c_j\left|\E_{\xi_j}\bigl[g^{(j)}_{\xi_j}(x)\bigr]\right| + \delta_\mathcal{X}(x),
\end{align*}
where $c_j>0$ for $j=1,\ldots,m$ are sufficiently large positive constants
(to ensure the penalty terms vanish at optimality).
It is straightforward to rewrite the above problem as~\eqref{eqn:composite-expect} and we omit the details.
The update in~\eqref{eqn:stoch-prox-linear} also requires solving a convex quadratic programming problem.

\subsection{Related work}
\label{sec:related-work}

The deterministic composite optimization problem \eqref{eqn:composite-opt} is a classical problem in nonconvex and nonsmooth optimization, and its study can date back to the late 70s in the last century; see, e.g., \cite{Composite:Bertsekas,Composite:Fletcher,Composite:Poljak}. 
Recently, there has been a renewed interest in such problems due to many emerging applications,  including the robust phase retrieval problem considered in \cite{PhaseRetrieval-Duchi-2017}, the low-rank semidefinite programming (SDP) problem considered in \cite{LRSDP}, and the robust blind deconvolution problem considered in \cite{Deconvolution}, and so on. 
In fact, many of these applications involve the average or expectation over large amount of component loss functions, similar to those shown in problems~\eqref{eqn:composite-finite} and~\eqref{eqn:composite-expect}.  

For solving the nonlinear least-square problems (when $f=\|\cdot\|^2$), the idea of linearizing the inner mapping~$g$ is well-known from the classical Gauss-Newton method (e.g, \cite[Section~10.3]{NocedalWright2006book}).
For nonsmooth~$f$, the trial of linearizing the inner mapping $g$ was made in
\cite{Prox-Linear:Burke-1985,Prox-Linear:Burke-Ferris-1995}, where the
linearization is used to construct a descent direction for line-search. 
In \cite{Gauss-Newton-Nesterov}, Nesterov proposed the Gauss-Newton type of algorithm \eqref{eqn:prox-linear} for nonsmooth~$f$, analyzed its general convergence properties and proved local quadratic convergence under a non-degeneracy assumption. 
More recently, it has received more attention under the name of prox-linear algorithm. 
The authors of \cite{Prox-Linear-Cartis-2011,Prox-Linear-Dmitriy-2016,Prox-Linear-Ochs-2017} discussed its iteration complexity and the numerical cost of solving the subproblem in each iteration. 
In \cite{Prox-Linear-ProxRivisited-Dmitriy-2017,Prox-Linear-Errbounds-Dmitriy-2018},
the authors studied its fast local convergence property under the
quadratic growth or the error-bound conditions. 
Additional references can be found in
\cite{Burke-1,Burke-2,Prox-Linear-StephenW-2016}. 

In the stochastic settings, it is worth noting that \cite{DamekDima2019ModelBased,Sto-Prox-LR-3,Sto-Prox-LR-2,HazanSabachVoldman2020} have considered the problem
\[
    \minimize_{x\in\mathcal{X}}\quad \E_{\xi}\bigl[f_\xi(g_\xi(x))\bigr],
\] 
where the expectation is taken outside of the composition (in many cases~$f$ does not depend on the random variable~$\xi$).
This problem is essentially a special case of the classical stochastic programming problem. The problems we consider in~\eqref{eqn:composite-finite} and~\eqref{eqn:composite-expect} are quite different.

Algorithms for solving stochastic composite optimization problems of the
forms~\eqref{eqn:composite-finite} and~\eqref{eqn:composite-expect} have been
studied recently in
\cite{blanchet2017unbiased,VRSC-PG,SVR-SCGD,Min-Max,SCGD-M.Wang,ASC-PG-M.Wang,SVR-ADMM,C-SAGA,C-SARAH}.
Since these are all stochastic or randomized algorithms, 
a common measure of performance is their sample complexity, i.e., the total number of samples of the component mappings $g_i$ or $g_\xi$ and their Jacobians required to output some point~$\bar{x}$ such that 
$\E\bigl[\|\cG(\bar{x})\|^2\bigr]\leq\epsilon$, where $\epsilon$ is a predefined precision and $\cG(\bar{x})$ is the composite gradient mapping at~$\bar{x}$
(for a precise definition, see~\eqref{eqn:grad-mapping} in Section~\ref{sec:framework}).
When both~$f$ and~$g$ are smooth and~$g$ is a finite-average, the best sample complexity is $\cO(N+N^{1/2}\epsilon^{-1})$ given in \cite{C-SARAH}, which matches the best known complexity for nonconvex finite-sum optimization without composition \cite{fang2018spider,SARAH-3,ProxSARAH,SpiderBoost}. 
When both~$f$ and~$g$ are smooth and~$g$ is a general expectation, the state-of-the-art sample complexity is the $\cO(\epsilon^{-3/2})$ obtained in \cite{C-SARAH}. 
When $f$ is convex but nonsmooth and $g$ is a finite sum of~$N$ smooth mappings, the authors of \cite{Min-Max} applied the conjugate function of~$f$ and transformed problem \eqref{eqn:composite-finite} to a min-max saddle-point problem. 
The sample complexity of their method (without counting subproblem cost) is $\cO(N\epsilon^{-1})$.

After the initial submission of this paper, we were brought to
attention the independent work \cite{tran2020stochastic}. The authors also consider problems~\eqref{eqn:composite-finite} and~\eqref{eqn:composite-expect} and develop stochastic Gauss-Newton methods (same form as prox-linear algorithms) using SARAH \cite{SARAH-1} for variance reduction. They obtained sample complexity $\cO(\epsilon^{-5/2})$ for $g_\xi$ and $\cO(\epsilon^{-3/2})$ for $g'_\xi$, but for a slightly different stationarity measure than the one used in this paper. We will comment on the connections to our results at the ends of Sections~\ref{sec:nonsmooth-finite} and~\ref{sec:nonsmooth-expect}.

\subsection{Contributions and outline} 
In this paper, we develop a class of stochastic \emph{variance-reduced} prox-linear algorithms for solving problems~\eqref{eqn:composite-finite} and~\eqref{eqn:composite-expect}, by constructing the estimates~$\tgk$ and~$\tJk$ in~\eqref{eqn:stoch-prox-linear} with the variance reduction techniques of SVRG \cite{SVRG-TongZhang,SVRG-LinXiao} and SARAH/\textsc{Spider} \cite{SARAH-1,fang2018spider}. 
Our main results are summarized below.
\begin{itemize}
    \item When $f$ is convex and nonsmooth and $g$ is a finite average, we construct an SVRG type estimator augmented with additional first-order correction, and obtain the sample complexity $\cO(N+N^{4/5}\epsilon^{-1})$ for both component mapping ($g_i$) and Jacobian ($g'_i$) evaluations. 
    \item When $f$ is convex and nonsmooth and $g$ is an expectation of random smooth mappings, we use the SARAH/\textsc{Spider} estimator, and obtain a sample complexity of $\cO(\epsilon^{-5/2})$ for the random mappings ($g_\xi$) and $\cO(\epsilon^{-3/2})$ for the Jacobians ($g'_\xi$).
    \item When $f$ is smooth, we also adopt the SARAH/\textsc{Spider} estimator. For both component mapping and Jacobian evaluations, we obtain the sample complexities $\cO(N + \sqrt{N}\epsilon^{-1})$ and $\cO(\epsilon^{-3/2})$ for the finite average case and expectation case respectively.
\end{itemize}
The first result above (with nonsmooth~$f$ and finite-sum $g$) appears to be new and our sample complexity improves over the best known in the literature \cite{Min-Max}.
The second result is among the first in the literature to derive improved sample complexity for nonsmooth~$f$ and with~$g$ being an expectation (see also~\cite{tran2020stochastic}).
These results can be extended to the cases when $f$ is \emph{weakly convex} (see its definition in, e.g., \cite{DamekDima2019ModelBased,Prox-Linear-Dmitriy-2016}). We omit details to keep the presentation relatively simple, but will make remarks on the necessary changes where it is applicable. 

Note that most work on stochastic composite optimization (SCO) construct the gradient estimators based on chain-rule (see e.g. \cite{SCGD-M.Wang,C-SARAH,NestedSpider}), and they all fail when $f$ is nonsmooth. The significance of our results (and those in \cite{tran2020stochastic}) is to show that using the prox-linear framework, instead of the chain-rule, can take advantage of variance reduction techniques in the nonsmooth composite setting to achieve better sample complexity. 
Another feature that distinguishes our first two results from the existing smooth SCO literature is the imbalance between the required estimation accuracy of $\tilde{g}$ and $\tilde{J}.$ Unlike the chain rule based algrithms for smooth SCO problems where the required accuracy for $\tilde{g}$ and $\tilde{J}$ are of the same order (see e.g. \cite{VRSC-PG,C-SAGA,C-SARAH}), the nonsmooth SCO problem requires the \emph{order} of estimation accuracy for $\tilde{g}$ to be much higher than $\tilde{J}$. New techniques are required to handle this challenge.

Our results with~$f$ being smooth match those in \cite{C-SARAH}, which are obtained by using variance-reduced gradient estimators based on the chain rule, i.e.,
$(\tJk)^Tf'(\tgk)$, in contrast to using the proximal mapping of~$f$ in~\eqref{eqn:stoch-prox-linear}. It is often observed in practice that algorithms based on proximal mappings can be more efficient than those based on gradients, even though in theory they have the same sample complexity (e.g., \cite{AsiDuchi2019Truncated,AsiDuchi2019StoProxPoint,DamekDima2019ModelBased}). Therefore it is very meaningful to establish the convergence and complexity of proximal-mapping based methods even when~$f$ is smooth.
In addition, we comment on its effectiveness by relating to the classical Gauss-Newton method at the end of this paper.

\paragraph{Organization} 
In Section~\ref{sec:framework}, we present a general framework of stochastic variance-reduced prox-linear algorithms using the update formula~\eqref{eqn:stoch-prox-linear}, without specifying how the estimates $\tgk$ and $\tJk$ are constructed. 
In Sections~\ref{sec:nonsmooth-finite} and~\ref{sec:nonsmooth-expect}, we assume that~$f$ can be nonsmooth, and present the constructions of~$\tgk$ and~$\tJk$ and the resulting sample complexities for solving problems~\eqref{eqn:composite-finite} and~\eqref{eqn:composite-expect} respectively. 
In Sections~\ref{sec:smooth-finite} and~\ref{sec:smooth-expect},  we assume that~$f$ is smooth and present the estimators and the corresponding sample complexities for solving these two problems respectively.
In Section~\ref{sec:experiments}, we present preliminary numerical experiments to demonstrate the effectiveness of the proposed algorithms.
We conclude the paper in Section~\ref{sec:discussions} with further discussions on different variance reduction techniques for stochastic composite optimization.

\section{The algorithm framework}
\label{sec:framework}
In this section, we present a framework of stochastic variance-reduced prox-linear algorithms using the update formula~\eqref{eqn:stoch-prox-linear}.
In order to simplify notations, we define
\begin{equation}\label{eqn:Phi-def}
\Phi(x) \triangleq f(g(x)) + h(x),
\end{equation}
where $g$ is either the average of finite number of component mappings as in problem~\eqref{eqn:composite-finite}, 
or the expectation of a family of random component mappings as in~\eqref{eqn:composite-expect}.
We make the following assumptions throughout the paper.
\begin{assumption}\label{assumption:f-cvx} 
    The function $f:\R^m\to\R\cup\{+\infty\}$ is convex and $\ell_f$-Lipschitz continuous, i.e.,
    \[
        |f(u)-f(v)|\leq\ell_f\|u-v\|, \qquad \forall\, u, v\in \R^m.
    \]
The function $h:\R^n\to\R\cup\{+\infty\}$ is convex and lower semi-continuous. 
\end{assumption} 
\begin{assumption}\label{assumption:g-lip}
    The vector mapping $g:\R^n\to\R^m$ is $\ell_g$-Lipschitz continuous and its Jacobian $g':\R^n\to\R^{m\times n}$ is $L_g$-Lipschitz continuous, i.e., 
    \begin{align*}
        \|g(x) - g(y)\| &\leq \ell_g \|x-y\|, \\
        \|g'(x) - g'(y)\| &\leq L_g \|x-y\|, 
    \end{align*}
for all $x,y\in\dom h$, where $\|\cdot\|$ for matrices denotes the spectral norm.
\end{assumption}
A direct consequence of the Lipschitz condition on $g'$ in Assumption~\ref{assumption:g-lip} is 
\begin{equation}\label{eqn:g-quadratic-bd}
    \bigl\|g(x)-g(y)-g(y)'(x-y)\bigr\| \leq \frac{L_g}{2}\|x-y\|^2.
\end{equation}
(See, e.g., \cite[Theorem~3.2.12]{OrtegaRheinboldt1970}.) 
Throughout the paper, we also assume the objective function $\Phi$ is lower bounded, as stated in the following assumption.
\begin{assumption}\label{assumption:phi-lowerbound}
	There exists $\Phi_*$ such that  $\Phi_* = \inf_x\Phi(x)>-\infty$.
\end{assumption}
Under these assumptions, we have the following result.  
\begin{lemma}\label{lemma:upper-bound} 
Suppose Assumption \ref{assumption:f-cvx} and \ref{assumption:g-lip} hold, then 
for any $x,y\in\dom h$,
\begin{equation}
    f(g(x)) \leq f(g(y) + g'(y)(x-y)) + \frac{\ell_f L_g}{2}\|x-y\|^2.
\end{equation}
\end{lemma}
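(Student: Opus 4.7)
The plan is to prove this bound by decomposing $f(g(x)) - f(g(y) + g'(y)(x-y))$ and combining the Lipschitz continuity of $f$ from Assumption~\ref{assumption:f-cvx} with the quadratic bound on the linearization error of $g$ that already appears in equation~\eqref{eqn:g-quadratic-bd}. No convexity argument is needed here; only the two Lipschitz conditions come into play.

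First I would write
\[
    f(g(x)) = f(g(y) + g'(y)(x-y)) + \bigl[f(g(x)) - f(g(y) + g'(y)(x-y))\bigr].
\]
Applying $\ell_f$-Lipschitz continuity of $f$ to the bracketed difference bounds it above by $\ell_f \bigl\|g(x) - g(y) - g'(y)(x-y)\bigr\|$. Then I would invoke the consequence~\eqref{eqn:g-quadratic-bd} of Assumption~\ref{assumption:g-lip} to replace this norm by $\tfrac{L_g}{2}\|x-y\|^2$, yielding the claimed inequality
\[
    f(g(x)) \leq f(g(y) + g'(y)(x-y)) + \frac{\ell_f L_g}{2}\|x-y\|^2.
\]

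There is essentially no obstacle: the estimate~\eqref{eqn:g-quadratic-bd} has already been stated in the text as a direct consequence of the $L_g$-Lipschitz continuity of $g'$, and the rest is a one-line triangle-inequality style argument using the Lipschitz modulus of $f$. The only minor point to watch is that both $x$ and $y$ lie in $\dom h$ so that Assumption~\ref{assumption:g-lip} applies, which is exactly the hypothesis of the lemma.
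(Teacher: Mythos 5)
Your proposal is correct and follows essentially the same argument as the paper: decompose $f(g(x))$ around the linearization, apply the $\ell_f$-Lipschitz continuity of $f$ to the difference, and then invoke the quadratic bound~\eqref{eqn:g-quadratic-bd} coming from the $L_g$-Lipschitz continuity of $g'$. No gaps; this matches the paper's proof of Lemma~\ref{lemma:upper-bound}.
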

\begin{proof} 
    By the Lipschitz continuity of $f$ and $g'$, we have
	\begin{eqnarray*}
    f(g(x)) 
    &=&  f\bigl(g(y) + g'(y)(x-y)\bigr) + f(g(x)) - f\bigl(g(y) + g'(y)(x-y)\bigr)\\
    & \leq &  f\bigl(g(y) + g'(y)(x-y)\bigr) + \bigl|f(g(x)) - f\bigl(g(y) + g'(y)(x-y)\bigr)\bigr|\\
	&\leq &f\bigl(g(y) + g'(y)(x-y)\bigr) + \ell_f\bigl\|g(x) - g(y) - g'(y)(x-y)\bigr\|\\
	& \leq & f\bigl(g(y) + g'(y)(x-y)\bigr) + \frac{\ell_f L_g}{2}\|x-y\|^2,
	\end{eqnarray*}
    where the last inequality is due to~\eqref{eqn:g-quadratic-bd}.
\end{proof}

As a result of Lemma~\ref{lemma:upper-bound}, $f(g(y) + g'(y)(x-y)) + h(x) + \frac{M}{2}\|x-y\|^2$ is an upper bound of the objective function $f(g(x)) + h(x)$ as long as $M\geq \ell_fL_g$. 
This is exactly the principle of majorization used in the update~\eqref{eqn:prox-linear}.
In order to exploit the finite-average structure of problem~\eqref{eqn:composite-finite}, we can approximate the full average $g(x^k)$ and $g'(x^k)$ with randomly sampled mini-batch estimators $\tgk$ and $\tJk$ as in~\eqref{eqn:mini-batch}.
For problem~\eqref{eqn:composite-expect}, sampling based methods are the only choices because the full expectations $\E_\xi[\cdot]$ are impossible to evaluate in most cases.
As shown in several previous work (see, e.g., \cite{DekelGSX2012} and \cite[Section~3]{NestedSpider}), the simple mini-batching scheme~\eqref{eqn:mini-batch} 
usually does not reduce the overall sample complexity for problems with similar structure, compared with using the full-batch in the finite-average case and using a single sample in the expectation case.

\begin{algorithm2e}[t]
    \caption{Stochastic variance-reduced prox-linear algorithm} 
	\label{algo:SVR-PL} 
	{\bf input:} initial point $x^1_0$, $M>0$, number of outer and inner iterations $K$ and $\tau$.\\
	\For{$k = 1,\ldots,K$}{ 
		\For{$i = 0,\ldots,\tau-1$}{
            \eIf{$i==0$}{
                compute $\tgk_0$ and $\tJk_0$ using \emph{large} batches $\Bkz$ and $\Skz$ respectively. \label{alg-line:gk0-Jk0}
            }{
                compute $\tgki$ and $\tJki$ using \emph{small} batches $\Bki$ and $\Ski$ respectively. \label{alg-line:gki-Jki}
            }
            $\xkie = \displaystyle\argmin_x ~\textstyle\Bigl\{f\bigl(\tgki + \tJki(x-\xki)\bigr) + h(x) + \frac{M}{2}\|x-\xki\|^2\Bigr\}$. \label{alg-line:argmin}
		}
		Set $x_0^{k+1} = x^k_{\tau}.$
	}  
    {\bf output:} choose $x^{k^*}_{i^*}$ from $\{\xki\}_{i = 0,\ldots,\tau-1}^{k = 1,\ldots,K}$ uniformly at random.
\end{algorithm2e}   

In this paper, we propose a class of stochastic variance-reduced prox-linear algorithms, outlined in Algorithm~\ref{algo:SVR-PL}, and shown that they achieve better sample complexities than simple mini-batching. 
Following the celebrated SVRG method \cite{SVRG-TongZhang,SVRG-LinXiao}, 
our framework employs an outer loop of~$K$ stages and an inner loop of~$\tau$ iterations. 
During the first iteration of each inner loop, the mapping and Jacobian approximations $\tgkz$ and $\tJkz$ are computed using relatively large sample batches.
In the rest of inner iterations, they are computed with relatively small sample batches. 
It turns out that different variance-reduced estimators are needed to obtain the best sample complexity under different assumptions on~$f$ and the structure of~$g$.
We will present the details of constructing different estimators and their convergence analysis in the remaining sections of this paper.

In order to characterize the sample complexity of different algorithms, we first define what is an $\epsilon$-stationary point.
For any $x\in\dom h$, we define the proximal point
\[
x_+\triangleq
\argmin_y \left\{ f\bigl(g(x)+g'(x)(y-x))+h(y) + \frac{M}{2}\|y-x\|^2\right\}
\]
and the composite gradient mapping at~$x$,
\begin{equation}
\label{eqn:grad-mapping}
\cG_M(x)\triangleq M(x-x_+).
\end{equation}
Given any $\epsilon>0$, we call $\bar{x}$ an $\epsilon$-stationary point of~$\Phi$ defined in~\eqref{eqn:Phi-def} if $\|\cG_M(\bar{x})\|^2\leq\epsilon$.
Note that when $h=0$ and $f$ is the identity mapping, we have $\cG_M(x) = \nabla \Phi(x)$ for any $M>0$ and the definition of $\epsilon$-stationary point reduces to its classical form $\|\nabla\Phi(x)\|^2\leq\epsilon$ for smooth optimization. 
For the validity of $\|\cG_M(\cdot)\|^2$ as an optimality measure under nontrivial~$h$ and nonsmooth~$f$, the readers are referred to \cite{Prox-Linear-Errbounds-Dmitriy-2018}. 
To simplify notation, we will omit the subscript~$M$ (which is a constant throughout this paper) and denote the composite gradient mapping as $\cG(x)$.

The sample complexity of a randomized algorithm, such as Algorithm~\ref{algo:SVR-PL}, is the total number of evaluations of the component mappings $g_i$ or $g_\xi$ and their Jacobians required in order to output some $\bar{x}$ satisfying
\begin{equation}
\label{eqn:eps-solu}
\E\bigl[\|\cG(\bar{x})\|^2\bigr]\leq \epsilon,
\end{equation}
where the expectation is taken over all the random samplings during the iterations of the algorithm.

Notice that the proximal point $x_+$ used in the definition of $\cG(x)$ is computed with~$g(x)$ and~$g'(x)$, which can be very costly if not impossible to evaluate. 
In Algorithm~\ref{algo:SVR-PL}, the proximal point $\xkie$ is computed using the estimates $\tgki$ and $\tJki$, i.e.,
\begin{equation}\label{eqn:xkie-def}
    \xkie = \argmin_x \left\{f\bigl(\tgki + \tJki(x-\xki)\bigr) + h(x) + \frac{M}{2}\|x-\xki\|^2\right\}.
\end{equation}
This leads to a convenient approximation, 
\begin{equation}\label{eqn:approx-gm}
\widetilde{\cG}(\xki)\triangleq M(\xki-\xkie),
\end{equation}
of the true gradient mapping $\cGki= M(\xki-\hat{x}^k_{i+1})$, where 
\begin{equation}\label{eqn:x-hat-def}
    \hat{x}^k_{i+1} 
    = \argmin_x \left\{f\bigl(g(\xki) + g'(\xki)(x-\xki)\bigr) + h(x) + \frac{M}{2}\|x-\xki\|^2\right\}.
\end{equation}
Since the definitions of $\epsilon$-stationary point and sample complexity are based on the true gradient mapping $\cG$ but computationally we only have access to the approximation $\widetilde{\cG}$, we need to derive a bound between them for the purpose of complexity analysis.
Not surprisingly, such a bound depends on the approximation quality of the estimators $\tgki$ and $\tJki$, as shown in the following lemma.

\begin{lemma}
\label{lemma:PG-finite-sum-nonsmooth}
Under Assumptions \ref{assumption:f-cvx} and \ref{assumption:g-lip}, 
the iterates generated by Algorithm~\ref{algo:SVR-PL} satisfy
\begin{eqnarray*}
    \frac{M - \ell_f L_g}{M^2}\bigl\|\cG(\xki)\bigr\|^2 
    &\leq& \frac{2M+\ell_f L_g}{M^2}\bigl\|\widetilde{\cG}(\xki)\bigr\|^2  \\
    && \,+\, 4\ell_f\bigl\|\tgki-g(\xki)\bigr\| + \frac{2\ell_f}{L_g}\bigl\|\tJki-g'(\xki)\bigr\|^2.
\end{eqnarray*}
\end{lemma}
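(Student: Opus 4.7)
The plan is to reduce the claim to controlling the displacement $\|\xkie - \hxkie\|$ between the exact and approximate prox-linear iterates, and then to convert estimator errors into that displacement via strong convexity of the two subproblems. First I would use $(a+b)^2 \le 2a^2 + 2b^2$ on the identity $\cGki = (\cGki - \tcGki) + \tcGki$ to obtain $\|\cGki\|^2 \le 2\|\tcGki\|^2 + 2M^2\|\xkie - \hxkie\|^2$, reducing the problem to bounding $M^2\|\xkie - \hxkie\|^2$.

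Next, let $\Psi(x) = f(g(\xki) + g'(\xki)(x - \xki)) + h(x) + \frac{M}{2}\|x - \xki\|^2$ and let $\widetilde{\Psi}(x)$ be the analogous function with $(g(\xki), g'(\xki))$ replaced by $(\tgki, \tJki)$. Both are $M$-strongly convex with minimizers $\hxkie$ and $\xkie$ respectively. Strong convexity at the minimizers gives $\Psi(\xkie) - \Psi(\hxkie) \ge \frac{M}{2}\|\xkie - \hxkie\|^2$ and $\widetilde{\Psi}(\hxkie) - \widetilde{\Psi}(\xkie) \ge \frac{M}{2}\|\xkie - \hxkie\|^2$. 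Adding these and noting that the quadratic and $h$ parts cancel inside $\Psi - \widetilde{\Psi}$, I would bound each difference by $\ell_f$-Lipschitz continuity of $f$: $|\Psi(x) - \widetilde{\Psi}(x)| \le \ell_f\|g(\xki) - \tgki\| + \ell_f\|g'(\xki) - \tJki\|\|x - \xki\|$. Evaluating at $x = \xkie$ (where $\|\xkie - \xki\| = \|\tcGki\|/M$) and $x = \hxkie$ (where $\|\hxkie - \xki\| = \|\cGki\|/M$) and multiplying through by $M$ produces $M^2\|\xkie - \hxkie\|^2 \le 2M\ell_f\|g(\xki) - \tgki\| + \ell_f\|g'(\xki) - \tJki\|(\|\cGki\| + \|\tcGki\|)$.

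Plugging this back in yields $\|\cGki\|^2 \le 2\|\tcGki\|^2 + 4M\ell_f\|g(\xki) - \tgki\| + 2\ell_f\|g'(\xki) - \tJki\|(\|\cGki\| + \|\tcGki\|)$. To finish, I would split each of the two cross terms $2\ell_f\|g'(\xki) - \tJki\|\cdot\|\cGki\|$ and $2\ell_f\|g'(\xki) - \tJki\|\cdot\|\tcGki\|$ using Young's inequality $2ab \le a^2/c + cb^2$ with the specific parameter $c = \ell_f L_g/M$ in both applications, absorb the resulting $(\ell_f L_g/M)\|\cGki\|^2$ term into the left-hand side, and finally divide by $M$. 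With this single choice of $c$, the four coefficients collapse to $(M - \ell_f L_g)/M^2$, $(2M + \ell_f L_g)/M^2$, $4\ell_f$, and $2\ell_f/L_g$, exactly as claimed (the inequality being informative only when $M > \ell_f L_g$). The main obstacle is simply pinning down the correct Young parameter: reverse engineering from the target coefficients shows the choice $c = \ell_f L_g/M$ is essentially forced once the displacement bound is in hand, and no other tuning is needed.
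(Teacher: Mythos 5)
Your proposal is correct and follows essentially the same route as the paper: bound the displacement $\|\xkie-\hxkie\|$ by adding the two strong-convexity inequalities for the exact and approximate subproblems, control the resulting function-value differences via the $\ell_f$-Lipschitzness of $f$, and then combine with $\|a+b\|^2\le 2\|a\|^2+2\|b\|^2$ and Young's inequality to absorb the $\|\cG(\xki)\|^2$ term. The only cosmetic difference is that the paper applies Young's inequality (with parameter $L_g$ on the $\|x-\xki\|^2$ terms) before substituting the gradient-mapping identities, whereas you apply it afterwards with parameter $\ell_f L_g/M$; the two choices are equivalent and yield identical constants.
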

\begin{proof}
For the ease of notation,  we denote 
\begin{eqnarray}
    F(x;\xki) &=& f\bigl(g(\xki) + g'(\xki)(x-\xki)\bigr), \label{eqn:F-def}\\
    \widetilde{F}(x;\xki) &=& f\bigl(\tgki + \tJki(x-\xki)\bigr).
    \label{eqn:tF-def}
\end{eqnarray}
Since both~$f$ and~$h$ are convex (Assumption~\ref{assumption:f-cvx}),
the following two functions are $M$-strongly convex:
\begin{eqnarray}
    && F(x;\xki) + h(x) + \frac{M}{2}\|x-\xki\|^2, \label{eqn:F+h+M} \\
    && \widetilde{F}(x;\xki) + h(x) + \frac{M}{2}\|x-\xki\|^2.
    \label{eqn:tF+h+M} 
\end{eqnarray} 
According to~\eqref{eqn:x-hat-def} and~\eqref{eqn:xkie-def}, $\hat{x}_{i+1}^k$ and $\xkie$ are the minimizers of these two functions respectively. 
Therefore
\begin{eqnarray*}
    F(\hat{x}^k_{i+1};\xki) + h(\hat{x}^k_{i+1}) + \frac{M}{2}\|\hat{x}^k_{i+1}-\xki\|^2 
	& \leq & F(\xkie;\xki) + h(\xkie) + \frac{M}{2}\|\xkie-\xki\|^2  \\
    &&  - \frac{M}{2}\|\hat{x}^k_{i+1}-\xkie\|^2,  
\end{eqnarray*} 
and
\begin{eqnarray*}
    \widetilde{F}(x^k_{i+1};\xki) + h(x^k_{i+1}) + \frac{M}{2}\|x^k_{i+1}-\xki\|^2 
& \leq & \widetilde{F}(\hat{x}^k_{i+1};\xki) + h(\hat{x}^k_{i+1}) + \frac{M}{2}\|\hat{x}^k_{i+1}-\xki\|^2 \\
&& - \frac{M}{2}\|\hat{x}^k_{i+1}-\xkie\|^2. 
\end{eqnarray*} 
Summing the two inequalities above and rearranging the terms, we obtain
\begin{eqnarray}
	\label{lm:PG-finite-sum-3}
	M\|\hat{x}^k_{i+1}-\xkie\|^2 
    &\leq& F(\xkie;\xki)-\widetilde{F}(\xkie;\xki) 
     + \widetilde{F}(\hat{x}^k_{i+1};\xki)-F(\hat{x}^k_{i+1};\xki).
\end{eqnarray}
Using the Lipschitz property of~$f$, we have 
\begin{eqnarray*}
\bigl| F(\xkie;\xki)-\widetilde{F}(\xkie;\xki)\bigr| 
    &=& \left| f\bigl(g(\xki)+g'(\xki)(\xkie-\xki)\bigr) - f\bigl(\tgki+\tJki(\xkie-\xki)\bigr)\right| \\
    & \leq & \ell_f\left\|\bigl(g(\xki) - \tgki\bigr) + \bigl(g'(\xki) - \tJki\bigr)\bigl(\xkie - \xki\bigr)\right\|\\
	& \leq & \ell_f\left(\bigl\|\tgki - g(\xki)\bigr\| + \bigl\|\tJki - g'(\xki)\bigr\|\bigl\|\xkie - \xki\bigr\|\right)\\
    & \leq & \ell_f\left(\bigl\|\tgki - g(\xki)\bigr\| + \frac{1}{2L_g}\bigl\|\tJki - g'(\xki)\bigr\|^2 + \frac{L_g}{2}\bigl\|\xkie-\xki\bigr\|^2 \right).
\end{eqnarray*}
Replacing $\xkie$ in the above inequality with $\hxkie$, we get  
\begin{eqnarray*}
\bigl|F(\hxkie;\xki)-\widetilde{F}(\hxkie;\xki)\bigr| 
	& \leq & \ell_f \left(\bigl\|\tgki - g(\xki)\bigr\| + \frac{1}{2L_g}\bigl\|\tJki - g'(\xki)\bigr\|^2 + \frac{L_g}{2}\bigl\|\hxkie-\xki\bigr\|^2\right).
\end{eqnarray*}
Combining the two bounds above with~\eqref{lm:PG-finite-sum-3} gives
\begin{eqnarray*}
	M\bigl\|\hxkie-\xkie\bigr\|^2 
    & \leq & 2\ell_f\bigl\|\tgki - g(\xki)\bigr\| + \frac{\ell_f}{L_g}\bigl\|\tJki - g'(\xki)\bigr\|^2 \\
    && +\, \frac{\ell_f L_g}{2}\bigl\|\xkie-\xki\bigr\|^2+\frac{\ell_f L_g}{2}\bigl\|\hxkie-\xki\bigr\|^2 .
\end{eqnarray*}
Next, using the fact that $\|a+b\|^2\leq 2\|a\|^2+2\|b\|^2$ and the above inequality, we have
\begin{eqnarray*} 
	M\bigl\|\hxkie-\xki\bigr\|^2] 
    & \leq & 2M\bigl\|\xkie-\xki\bigr\|^2 + 2M\bigl\|\hxkie -\xkie\bigr\|^2 \\
	& \leq & 2M\bigl\|\xkie-\xki\bigr\|^2 + 4\ell_f\bigl\|\tgki - g(\xki)\bigr\| + \frac{2\ell_f}{L_g}\bigl\|\tJki - g'(\xki)\bigr\|^2 \\
	& & + \ell_f L_g\bigl\|\xkie-\xki\bigr\|^2 + \ell_f L_g\bigl\|\hxkie-\xki\bigr\|^2.
\end{eqnarray*}
Rearranging the terms yields 
\begin{eqnarray*}
    (M - \ell_f L_g)\bigl\|\hxkie-\xki\bigr\|^2 
    &\leq& (2M+\ell_f L_g)\bigl\|\xkie-\xki\bigr\|^2  \\
    && +\, 4\ell_f\bigl\|\tgki-g(\xki)\bigr\| 
     \,+\, \frac{2\ell_f}{L_g}\bigl\|\tJki-g'(\xki)\bigr\|^2.
\end{eqnarray*}
Finally, using the definitions $\cG(\xki)=M(\xki-\hxkie)$ and
$\widetilde{\cG}(\xki)=M(\xki-\xkie)$, we obtain the desired result.
\end{proof}

\paragraph{Extension to the weakly convex case.}
The function~$f$ is $\rho$-weakly convex if $f(x)+\frac{\rho}{2}\|x\|^2$ is convex. In order to extends results in this paper for weakly convex~$f$, we need to increase~$M$ to ensure that the functions in~\eqref{eqn:F+h+M} and~\eqref{eqn:tF+h+M} are strongly convex (in fact, strong convexity in expectation is sufficient).

\section{The nonsmooth and finite-average case}
\label{sec:nonsmooth-finite} 

In this section, we consider the composite finite-average problem~\eqref{eqn:composite-finite} with nonsmooth~$f$ and smooth~$g_i$'s.
In particular, we replace Assumption~\ref{assumption:g-lip} with the following more structured one, which implies Assumption~\ref{assumption:g-lip}.
\begin{assumption}
	\label{assumption:g-lip-finite}
    For each $i = 1,\ldots,N$, the mapping $g_i:\R^n\to\R^m$, is $\ell_{g,i}$-Lipschitz continuous and its Jacobian matrix $g'_i:\R^n\to\R^{m\times n}$ is $L_{g,i}$-Lipschitz continuous. Namely, 
\begin{align*}
\|g_i(x) - g_i(x)\| & \leq\ell_{g,i}\|x-y\|, \\
\|g'_i(x) - g'_i(x)\| & \leq L_{g,i}\|x-y\|,
\end{align*}
for all $x,y\in\dom h$ and $i = 1,\ldots,N$. 
\end{assumption}
A direct consequence of this assumption is that $g$ is $\big(\frac{1}{N}\sum_{i}^{N}\ell_{g,i}\big)$-Lipschitz continuous and $g'$ is $\big(\frac{1}{N}\sum_{i}^{N}L_{g,i}\big)$-Lipschitz continuous. Due to the root-mean square inequality $\frac{z_1+...+z_N}{N}\leq \sqrt{\frac{z_1^2+...+z_N^2}{N}}$, 
We define 
\begin{equation}\label{eqn:ms-Lip-constants}
\ell_g = \sqrt{\frac{1}{N}\sum_{i}^{N}\ell_{g,i}^2}, \qquad
L_g = \sqrt{\frac{1}{N}\sum_{i}^{N}L_{g,i}^2},
\end{equation}
which can serve as the Lipschitz constants of $g$ and $g'$ respectively
as in Assumption~\ref{assumption:g-lip}.

In this case, we construct the estimates $\tgkz$ and $\tJkz$ using the full batch. In other words, we let $\Bkz=\Skz=\{1,2,\ldots,N\}$ and replace Line~\ref{alg-line:gk0-Jk0} in Algorithm~\ref{algo:SVR-PL} with
\begin{eqnarray}
\tgkz &=& g(\xkz) = \frac{1}{N}\sum_{i=1}^N g_i(\xkz), 
    \label{eqn:finite-gk0} \\
    \tJkz &=& g'(\xkz) = \frac{1}{N}\sum_{i=1}^N g'_i(\xkz).
    \label{eqn:finite-Jk0}
\end{eqnarray}
For $i>0$, we sample with replacement from $\{1,2,\ldots,N\}$ to obtain smaller sets $\Bki$ and $\Ski$
(whose cardinalities will be determined later), and
apply the following construction:
\begin{eqnarray}
    \tgki &=& \frac{1}{|\cB_i^k|}\sum_{j\in\cB_i^k} \Bigl(g_j(\xki) - g_j(x_0^k) - g_j'(x_0^k)(\xki-x_0^k)\Bigr) + g(x_0^k) + g'(x_0^k)(\xki-x_0^k),
          \label{eqn:g-svrg-correction} \\
    \tJki &=& \frac{1}{|\cS_i^k|}\sum_{j\in\cS_i^k} \Bigl(g'_j(\xki) - g'_j(x_0^k)\Bigr) + g'(x_0^k). 
\label{eqn:J-svrg} 
\end{eqnarray}
It is worth noting that here we use the standard SVRG estimator \cite{SVRG-TongZhang} to construct $\tJki$, but the estimator for $\tgki$ is augmented with a first-order correction (a similar estimator was proposed in~\cite{ZhouXuGu2018}). 

We remark that for nonsmooth~$f$, the first-order correction scheme in~\eqref{eqn:g-svrg-correction} is essential for achieving a sample complexity that is sublinear in~$N$, whereas purely applying the SVRG estimator will only result in a sample complexity linear in~$N$. This is very different from the case with smooth~$f$ (see e.g. \cite{VRSC-PG,SVR-SCGD}). The main reason for such distinction is that nonsmooth SCO problem requires the estimation accuracy for $\tilde{g}_i^k$ to be much higher than $\tilde{J}_i^k$. 
In addition, the SARAH/\textsc{Spider} estimators seem to be not compatible with the first-order correction technique and we are not able to combine them together in order to obtain a sample complexity that is sublinear in~$N$.

The following lemma bounds the approximation errors of these estimators. 
 \begin{lemma}
	\label{lemma:svr-f-new}
    Suppose Assumption~\ref{assumption:g-lip-finite} holds and $\tgki$ and $\tJki$ are constructed according to~\eqref{eqn:g-svrg-correction} and~\eqref{eqn:J-svrg} respectively, then 
\begin{eqnarray*}
	\E\Bigl[\bigl\|\tgki - g(\xki)\bigr\| \,\big|\, \xki\Bigr] 
        &\leq& \frac{L_g}{2\sqrt{|\cB^k_i|}}\bigl\|\xki - x_0^k\bigr\|^2,\\ 
	\E\Bigl[\bigl\|\tJki - g'(\xki)\bigr\|^2 \,\big|\, \xki\Bigr] 
    &\leq& \frac{L_g^2}{|\cS_i^k|}\bigl\|\xki - x_0^k\bigr\|^2,
\end{eqnarray*}
where $\E[\cdot|\xki]$ denotes conditional expectation given~$\xki$, i.e., expectation with respect to the random indices in $\Bki$ and $\Ski$.
\end{lemma}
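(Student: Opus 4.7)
The plan is to bound both quantities by a standard \emph{variance-is-at-most-second-moment} argument, exploiting that the samples in $\Bki$ and $\Ski$ are i.i.d.\ (drawn with replacement from $\{1,\ldots,N\}$) and that the summands inside each estimator are zero-mean, conditional on~$\xki$. The only real ingredients are (i)~the identity that defines each estimator, (ii)~the Lipschitz properties of $g_j$ and $g_j'$ from Assumption~\ref{assumption:g-lip-finite}, and (iii)~the definition $L_g^2=\frac{1}{N}\sum_j L_{g,j}^2$ from~\eqref{eqn:ms-Lip-constants}.

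For the Jacobian bound I would write $\tJki-g'(\xki)=\frac{1}{|\Ski|}\sum_{j\in\Ski}Y_j$ with $Y_j=\bigl(g_j'(\xki)-g_j'(\xkz)\bigr)-\bigl(g'(\xki)-g'(\xkz)\bigr)$, observe that conditional on $\xki$ the $Y_j$ are i.i.d.\ with $\E[Y_j\mid\xki]=0$, and use independence to get $\E\bigl[\|\tJki-g'(\xki)\|^2\,\big|\,\xki\bigr]=\frac{1}{|\Ski|^2}\sum_{j\in\Ski}\E[\|Y_j\|^2\mid\xki]\leq\frac{1}{|\Ski|}\E_j[\|g_j'(\xki)-g_j'(\xkz)\|^2]$, where the last inequality is $\mathrm{Var}\leq\E[\|\cdot\|^2]$. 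The Lipschitz bound on $g_j'$ then gives $\|g_j'(\xki)-g_j'(\xkz)\|^2\leq L_{g,j}^2\|\xki-\xkz\|^2$, and averaging $j$ uniformly over $\{1,\ldots,N\}$ contributes exactly the factor $L_g^2$ from~\eqref{eqn:ms-Lip-constants}.

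For the mapping bound I would first apply Jensen's inequality, $\E[\|\tgki-g(\xki)\|\mid\xki]\leq\sqrt{\E[\|\tgki-g(\xki)\|^2\mid\xki]}$, and then repeat the same variance calculation, this time with summands $Z_j=\bigl(g_j(\xki)-g_j(\xkz)-g_j'(\xkz)(\xki-\xkz)\bigr)-\bigl(g(\xki)-g(\xkz)-g'(\xkz)(\xki-\xkz)\bigr)$, which are again zero-mean given $\xki$. The crucial step that makes the bound scale like $\|\xki-\xkz\|^2$ rather than $\|\xki-\xkz\|$ is that each $g_j'$ is $L_{g,j}$-Lipschitz, so by~\eqref{eqn:g-quadratic-bd} applied to each component we have $\|g_j(\xki)-g_j(\xkz)-g_j'(\xkz)(\xki-\xkz)\|\leq\frac{L_{g,j}}{2}\|\xki-\xkz\|^2$. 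Combining variance-$\leq$-second-moment with this quadratic bound, averaging over~$j$, and taking the square root produces the factor $\frac{L_g}{2\sqrt{|\Bki|}}\|\xki-\xkz\|^2$.

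The arithmetic is routine; the one place to be careful is noticing that the first-order correction in~\eqref{eqn:g-svrg-correction} is exactly what makes the summands zero-mean and what allows the second-moment bound on each summand to use the quadratic surrogate error rather than just the Lipschitz error on $g_j$ itself. Once that observation is in place, both inequalities drop out of a single line of computation, and no additional concentration or martingale argument is required because $\Bki$ and $\Ski$ are sampled independently with replacement conditional on~$\xki$.
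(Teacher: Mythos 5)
Your proposal is correct and follows essentially the same route as the paper's proof: center the estimator into i.i.d.\ zero-mean summands (which is exactly what the first-order correction achieves), use independence to get the $1/|\Bki|$ (resp.\ $1/|\Ski|$) factor, bound each summand's second moment via the per-component quadratic bound $\frac{L_{g,j}}{2}\|\xki-\xkz\|^2$ (resp.\ the Lipschitz bound on $g_j'$), average over $j$ using the root-mean-square definition of $L_g$, and finish with Jensen's inequality for the first bound. The only difference is cosmetic: you apply Jensen before the variance computation while the paper applies it after.
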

\begin{proof}
    To prove the first inequality, we start with~\eqref{eqn:g-svrg-correction} and write
\begin{equation}\label{eqn:tgki-Z}
    \tgki - g(\xki) = \frac{1}{|\Bki|}\sum_{j\in\Bki}Z_j,
\end{equation}
    where
    $$Z_j = g_j(\xki) - g_j(x_0^k) - g_j'(x_0^k)(\xki-x_0^k) \,+\, g(x_0^k) + g'(x_0^k)(\xki-x_0^k) \,-\, g(\xki).$$
    Since~$j$ is randomly sampled from $\{1,2,\ldots,N\}$, we have $\E[g_j(\xki)]=g(\xki)$ and $\E[g'_j(\xki)]=g'(\xki)$, which implies $\E[Z_j|\xki] = 0$. That is, $\tgki$ is an unbiased estimate of $g(\xki)$.
    In addition, we have
	$$\E\bigl[g_j(\xki) - g_j(x_0^k) - g_j'(x_0^k)(\xki-x_0^k)\,\bigl|\,\xki\bigl] =   g(\xki) - g(x_0^k) - g'(x_0^k)(\xki-x_0^k).$$ 
This allows us to bound the variance of $Z_j$ as follows:
\begin{eqnarray*}
	\E\bigl[\|Z_j\|^2\,\big|\,\xki\bigr] 
    & = & \E\Bigl[\bigl\|g_j(\xki) - g_j(x_0^k) - g_j'(x_0^k)(\xki-x_0^k) \bigr\|^2\,\big|\,\xki\Bigr] \\
    && -\, \bigl\|g(\xki) - g(x_0^k) - g'(x_0^k)(\xki-x_0^k)\bigr\|^2 \\
	& \leq & \E\Bigl[\bigl\|g_j(\xki) - g_j(x_0^k) - g_j'(x_0^k)(\xki-x_0^k) \bigr\|^2 \,\big|\,\xki\Bigr]\\
	&\leq & \frac{1}{N}\sum_{j=1}^N\left(\frac{L_{g,j}}{2}\|\xki-x_0^k\|^2\right)^2\\
	& = & \frac{L_{g}^2}{4}\|\xki-x_0^k\|^4,
\end{eqnarray*}
where the last inequality is due to~\eqref{eqn:g-quadratic-bd} and Assumption \ref{assumption:g-lip-finite} respectively.
In the last equality, we used the definition of~$L_g$ in~\eqref{eqn:ms-Lip-constants}.

Combining the above inequality with~\eqref{eqn:tgki-Z} yields
	$$\E\Bigl[\bigl\|\tgki - g(\xki)\bigr\|^2 \,\big|\, \xki\Bigr] \leq \frac{L_g^2}{4|\cB^k_i|}\bigl\|\xki - x_0^k\bigr\|^4.$$ 
    Next, using the concavity of~$\sqrt{\cdot}$ and Jensen's inequality, we obtain the desired result:
\[
	\E\Bigl[\bigl\|\tgki - g(\xki)\bigr\| \,\big|\, \xki\Bigr] 
	\leq \sqrt{\E\Bigl[\bigl\|\tgki - g(\xki)\bigr\|^2 \,\big|\, \xki\Bigr] }
	\leq \frac{L_g}{2\sqrt{|\cB^k_i|}}\bigl\|\xki - x_0^k\bigr\|^2.
\]
To prove the second inequality, we define $Z_j = g'_j(\xki) - g'_j(x_0^k) + g'(x_0^k) - g'(\xki)$ and follow a similar line of arguments. 
\end{proof}

Next, we prove a descent property of the algorithm, which is a crucial step for the convergence analysis.
\begin{lemma}
\label{lemma: descent-Finite-nonsmooth}
Suppose Assumptions~\ref{assumption:f-cvx} and~\ref{assumption:g-lip-finite} hold and the estimates $\tgkz$, $\tJkz$, $\tgki$ and $\tJki$ in Algorithm~\ref{algo:SVR-PL} are constructed as in~\eqref{eqn:finite-gk0}-\eqref{eqn:J-svrg} respectively.
Then for $k=1,\ldots,K$ and $i=0,\ldots,\tau-1$, 
\begin{eqnarray}
\Phi(\xkie) 
&\leq& \Phi(\xki) - \frac{M - 2\ell_f L_g}{2M^2}\bigl\|\tcGki\bigr\|^2 \nonumber \\
&& +\, 2\ell_f \bigl\|\tgki-g(\xki)\bigr\| + \frac{\ell_f}{2L_g}\bigl\|\tJki-g'(\xki)\bigr\|^2. 
\label{eqn:descent-lemma}
\end{eqnarray}
\end{lemma}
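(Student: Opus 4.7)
The plan is to build the descent bound by comparing the true prox-linear surrogate at $\xki$ with the sampled surrogate actually minimized by the algorithm, and then use the minimizer property of $\xkie$ together with $M$-strong convexity of the sampled model. Let me use the shorthand of~\eqref{eqn:F-def} and~\eqref{eqn:tF-def}, and recall that $\xkie$ is the minimizer of $\widetilde{F}(\,\cdot\,;\xki)+h(\cdot)+\frac{M}{2}\|\cdot-\xki\|^2$.

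First, I will invoke Lemma~\ref{lemma:upper-bound} at the pair $(\xkie,\xki)$ to obtain
\[
\Phi(\xkie)\;\leq\; F(\xkie;\xki)+h(\xkie)+\tfrac{\ell_f L_g}{2}\|\xkie-\xki\|^2,
\]
so the problem reduces to controlling $F(\xkie;\xki)+h(\xkie)$. Next, using Lipschitz continuity of $f$ in the same manner as in the proof of Lemma~\ref{lemma:PG-finite-sum-nonsmooth},
\[
\bigl|F(\xkie;\xki)-\widetilde F(\xkie;\xki)\bigr|\leq \ell_f\bigl\|\tgki-g(\xki)\bigr\|+\ell_f\bigl\|\tJki-g'(\xki)\bigr\|\,\|\xkie-\xki\|,
\]
and splitting the last term by Young's inequality $ab\leq \frac{a^2}{2L_g}+\frac{L_g b^2}{2}$ will produce precisely the $\frac{\ell_f}{2L_g}\|\tJki-g'(\xki)\|^2$ term in the claim plus an extra $\frac{\ell_f L_g}{2}\|\xkie-\xki\|^2$ that will merge with the $\frac{\ell_f L_g}{2}\|\xkie-\xki\|^2$ coming from Lemma~\ref{lemma:upper-bound}.

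Then I invoke $M$-strong convexity: since $\widetilde F(\,\cdot\,;\xki)+h(\cdot)+\frac{M}{2}\|\cdot-\xki\|^2$ is $M$-strongly convex with minimizer $\xkie$, evaluating this inequality at $x=\xki$ gives
\[
\widetilde F(\xkie;\xki)+h(\xkie)\;\leq\; f(\tgki)+h(\xki)-M\|\xkie-\xki\|^2,
\]
because $\widetilde F(\xki;\xki)=f(\tgki)$. A final application of Lipschitz continuity of $f$ replaces $f(\tgki)$ by $f(g(\xki))+\ell_f\|\tgki-g(\xki)\|$, so that $f(\tgki)+h(\xki)\leq \Phi(\xki)+\ell_f\|\tgki-g(\xki)\|$.

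Chaining these bounds yields
\[
\Phi(\xkie)\;\leq\;\Phi(\xki)+2\ell_f\|\tgki-g(\xki)\|+\tfrac{\ell_f}{2L_g}\|\tJki-g'(\xki)\|^2-\bigl(M-\ell_f L_g\bigr)\|\xkie-\xki\|^2,
\]
which, upon substituting $\|\xkie-\xki\|^2=\|\tcGki\|^2/M^2$ and loosening the negative coefficient from $(M-\ell_f L_g)/M^2$ to the stated $(M-2\ell_f L_g)/(2M^2)$ (valid since $2(M-\ell_f L_g)\geq M-2\ell_f L_g$), gives~\eqref{eqn:descent-lemma}. There is no real obstacle; the only care needed is the Young's-inequality split constant $L_g$, which must be chosen to match the $\frac{1}{2L_g}$ factor in the final $\|\tJki-g'(\xki)\|^2$ term and to absorb the cross term into the quadratic $\|\xkie-\xki\|^2$ that is then dominated by the strong-convexity gain.
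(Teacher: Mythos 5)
Your proof is correct and takes essentially the same route as the paper's: majorize $\Phi(\xkie)$ via Lemma~\ref{lemma:upper-bound}, compare the true and sampled linearized models through the Lipschitz continuity of $f$ with a Young split on the Jacobian error, use the optimality of $\xkie$ in the sampled subproblem against the point $\xki$, and account for $f(\tgki)-f(g(\xki))$ by another Lipschitz bound. The only (harmless) deviation is that you invoke the full $M$-strong-convexity gain rather than mere minimality of $\xkie$, which gives the slightly sharper coefficient $M-\ell_f L_g$ that you then validly relax to $\tfrac{M-2\ell_f L_g}{2}$, whereas the paper arrives at the stated coefficient directly.
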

\begin{proof}
By the definition of $\Phi$ in~\eqref{eqn:Phi-def} and Lemma~\ref{lemma:upper-bound}, we have 
\begin{eqnarray}
\Phi(\xkie) & \leq & f(g(\xki) + g'(\xki)(\xkie-\xki)) + \frac{\ell_f L_g}{2}\bigl\|\xkie-\xki\bigr\|^2 + h(\xkie) \nonumber \\
& = &  f\bigl(\tgki + \tJki(\xkie-\xki)\bigr) + h(\xkie) + \frac{M}{2}\bigl\|\xkie-\xki\bigr\|^2
 -\, \frac{M - \ell_f L_g}{2}\bigl\|\xkie-\xki\bigr\|^2 \nonumber \\
& & + \underbrace{f\bigl(g(\xki) + g'(\xki)(\xkie-\xki)\bigr)-f\bigl(\tgki + g'(\xki)(\xkie-\xki)\bigr)}_{T_1}\nonumber\\
& & + \underbrace{f\bigl(\tgki + g'(\xki)(\xkie-\xki)\bigr)-f\bigl(\tgki + \tJki(\xkie-\xki)\bigr)}_{T_2} .
\label{eqn:descent-finite-0}
\end{eqnarray}
According to~\eqref{eqn:xkie-def}, we have
\[
    f\bigl(\tgki + \tJki(\xkie-\xki)\bigr) + h(\xkie) + \frac{M}{2}\bigl\|\xkie-\xki\bigr\|^2 ~\leq~ f(\tgki) + h(\xki).
\]
Therefore,
\begin{eqnarray}
\Phi(\xkie) 
&\leq & f(\tgki) + h(\xki) - \frac{M-\ell_f L_g}{2} \|\xkie-\xki\|^2 + T_1+ T_2 \nonumber\\
&\leq & f(g(\xki)) + h(\xki) - \frac{M-\ell_f L_g}{2} \|\xkie-\xki\|^2 + T_1+ T_2  +  \underbrace{f(\tgki)-f(g(\xki))}_{T_3}\nonumber\\
& = & \Phi(\xki)  - \frac{M-\ell_f L_g}{2} \|\xkie-\xki\|^2 + T_1+ T_2 +T_3. 
\label{eqn:Phi-descent}
\end{eqnarray}
By the Lipschitz property of~$f$, we have
\begin{eqnarray*}
    T_1 \leq \ell_f \bigl\|\tgki-g(\xki)\bigr\|, \qquad
    T_3 \leq \ell_f \bigl\|\tgki-g(\xki)\bigr\|,
\end{eqnarray*}
and
\begin{eqnarray}
T_2 &\leq& \ell_f\bigl\|\bigl(\tJki-g'(\xki)\bigr)(\xkie-\xki)\bigr\|
~\leq~ \ell_f\bigl\|\tJki-g'(\xki)\bigr\|\cdot\bigl\|\xkie-\xki\bigr\| \nonumber\\
&\leq& \frac{\ell_f}{2L_g}\bigl\|\tJki-g'(\xki)\bigr\|^2 + \frac{\ell_f L_g}{2}\bigl\|\xkie-\xki\bigr\|^2.
\label{eqn:descent-lemma-T2}
\end{eqnarray}
Combining the bounds on $T_1$, $T_2$ and $T_3$ and the inequality~\eqref{eqn:Phi-descent} yields
\begin{eqnarray*}
    \Phi(\xkie) 
    &\leq& \Phi(\xki) - \left(\frac{M}{2} - \ell_f L_g\right)\bigl\|\xkie-\xki\bigr\|^2 \\
    && +\, 2\ell_f \bigl\|\tgki-g(\xki)\bigr\| + \frac{\ell_f}{2L_g}\bigl\|\tJki-g'(\xki)\bigr\|^2, 
\end{eqnarray*}
which, upon noticing $\tcGki=-M(\xkie-\xki)$, is equivalent to the
desired result. 
\end{proof} 

Recall the definition that $\tcGki: = M(\xki-\xkie)$. 
In order to complete the convergence analysis, we define a stochastic Lyapunov function
\begin{equation}\label{eqn:lyapunov-def}
R_i^k = \E\left[\Phi(\xki) + c_i\biggl\|\sum_{t=0}^{i-1}\tcGkt\biggr\|^2\right],
\quad k=1,\ldots,K, \quad i=0,\ldots,\tau,
\end{equation}
where the coefficients $c_i$ for $i=0,1,\ldots,\tau$ are obtained through the recursion:
\begin{eqnarray}
c_{\tau} &=& 0, \nonumber \\
c_i &=& c_{i+1}\left(1+\frac{1}{\tau}\right) + \frac{1}{3M\sqrt{|\Bki|}} + \frac{1}{5M|\Ski|}, \quad i=\tau-1,\ldots,0.
\label{eqn:ct-finite-nonsmooth}
\end{eqnarray}
(Our choices or $|\Bki|$ and $|\Ski|$ will not depend on~$k$.)
In addition, we define the following constant
\[
   \gamma \triangleq \min_{0\leq i\leq \tau-1} \frac{1}{3}\left(\frac{1}{4M} - c_{i+1}(1+\tau)\right).
\] 
We can ensure $\gamma>0$ by choosing~$\tau$, $|\Bki|$ and $|\Ski|$ appropriately.
We will discuss how to set these values after the following lemma, where we simply assume $\gamma>0$.

\begin{lemma}
	\label{lemma:Lyapunov-finite-nonsmooth}
Suppose Assumptions~\ref{assumption:f-cvx} and~\ref{assumption:g-lip-finite} hold and the estimates $\tgkz$, $\tJkz$, $\tgki$ and $\tJki$ in Algorithm~\ref{algo:SVR-PL} are constructed as in~\eqref{eqn:finite-gk0}-\eqref{eqn:J-svrg} respectively.
In addition, we assume $M\geq 4\ell_f L_g$ and $\gamma>0$.
Then for each $k=1,\ldots,K$,
\begin{equation}\label{lm:Lyapunov-finite-nonsmooth-1}
	\sum_{i=0}^{\tau-1}\E\Bigl[\bigl\|\cGki\bigr\|^2\Bigr] 
    ~\leq~  \frac{R^k_0-R^k_\tau}{\gamma}  
    ~=~ \frac{\E[\Phi(x_0^k)]-\E[\Phi(x_0^{k+1})]}{\gamma}.
\end{equation}
\end{lemma}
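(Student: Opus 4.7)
The plan is to establish the one-step Lyapunov inequality
$R^k_{i+1} \leq R^k_i - \gamma\,\E\bigl[\|\cGki\|^2\bigr]$
for each $i = 0,\ldots,\tau-1$, and then telescope over $i$. The endpoints match the target because $R^k_0 = \E[\Phi(\xkz)]$ (the memory sum at $i=0$ is empty) and $R^k_\tau = \E[\Phi(x^k_\tau)] = \E[\Phi(x^{k+1}_0)]$ (since $c_\tau = 0$ and the algorithm sets $x^{k+1}_0 = x^k_\tau$). The algebraic bridge between the memory term and the variance bounds of Lemma~\ref{lemma:svr-f-new} is the identity $\xki - \xkz = -\tfrac{1}{M}\sum_{t=0}^{i-1}\tcGkt$, obtained by summing $x^k_{t+1} - x^k_t = -\tcGkt/M$ for $t=0,\ldots,i-1$; this gives $\|\xki - \xkz\|^2 = \tfrac{1}{M^2}\bigl\|\sum_{t=0}^{i-1}\tcGkt\bigr\|^2$.

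First I would start from Lemma~\ref{lemma: descent-Finite-nonsmooth}: since $M\geq 4\ell_f L_g$ implies $\tfrac{M-2\ell_f L_g}{2M^2} \geq \tfrac{1}{4M}$, the descent becomes
$\Phi(\xkie) \leq \Phi(\xki) - \tfrac{1}{4M}\|\tcGki\|^2 + 2\ell_f\|\tgki - g(\xki)\| + \tfrac{\ell_f}{2L_g}\|\tJki - g'(\xki)\|^2$. Next, rearranging Lemma~\ref{lemma:PG-finite-sum-nonsmooth} under the same hypothesis yields
$\|\tcGki\|^2 \geq \tfrac{1}{3}\|\cGki\|^2 - 2M\ell_f\|\tgki - g(\xki)\| - \tfrac{M\ell_f}{L_g}\|\tJki - g'(\xki)\|^2$. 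I would split the descent coefficient as $\tfrac{1}{4M} = 3\gamma_i + c_{i+1}(1+\tau)$ with $\gamma_i := \tfrac{1}{3}\bigl(\tfrac{1}{4M} - c_{i+1}(1+\tau)\bigr) \geq \gamma$, and substitute this lower bound only into the $3\gamma_i\|\tcGki\|^2$ piece, keeping $c_{i+1}(1+\tau)\|\tcGki\|^2$ intact. Because $3\gamma_i M \leq \tfrac{1}{4}$, the aggregated error coefficients remain fixed multiples of $\ell_f$ and $\ell_f/L_g$ (concretely, at most $\tfrac{5}{2}\ell_f$ and $\tfrac{3}{4}\ell_f/L_g$). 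Taking conditional expectation given $\xki$, plugging in Lemma~\ref{lemma:svr-f-new} together with the identity above and $\ell_f L_g \leq M/4$, one verifies that the resulting coefficients of $\bigl\|\sum_{t=0}^{i-1}\tcGkt\bigr\|^2$ fit within the budgets $\tfrac{1}{3M\sqrt{|\Bki|}}$ and $\tfrac{1}{5M|\Ski|}$ built into the recursion~\eqref{eqn:ct-finite-nonsmooth}. Finally, adding $c_{i+1}\bigl\|\sum_{t=0}^{i}\tcGkt\bigr\|^2$ to both sides and applying Young's inequality
$\bigl\|\sum_{t=0}^{i}\tcGkt\bigr\|^2 \leq (1+\tfrac{1}{\tau})\bigl\|\sum_{t=0}^{i-1}\tcGkt\bigr\|^2 + (1+\tau)\|\tcGki\|^2$
exactly cancels the retained $c_{i+1}(1+\tau)\|\tcGki\|^2$ term, while the combined coefficient of $\bigl\|\sum_{t=0}^{i-1}\tcGkt\bigr\|^2$ collapses to $c_i$ by~\eqref{eqn:ct-finite-nonsmooth}. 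Taking full expectation delivers the target one-step inequality.

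The main obstacle is the tight constant accounting: after trading part of $\|\tcGki\|^2$ for $\|\cGki\|^2$ via Lemma~\ref{lemma:PG-finite-sum-nonsmooth}, the aggregated coefficients of $\|\tgki-g(\xki)\|$ and $\|\tJki-g'(\xki)\|^2$ inflate, and one must check that their expectations, once divided by $M^2$, still fit strictly below the fractions $\tfrac{1}{3}$ and $\tfrac{1}{5}$ prescribed in~\eqref{eqn:ct-finite-nonsmooth}. This reduces to arithmetic inequalities such as $\tfrac{5}{16} \leq \tfrac{1}{3}$ and $\tfrac{3}{16} \leq \tfrac{1}{5}$, both of which hold precisely because $M \geq 4\ell_f L_g$ forces $\ell_f L_g/M \leq \tfrac{1}{4}$, illustrating that the numerical constants in the recursion for $c_i$ and in the assumption on $M$ are not arbitrary but dictated by this bookkeeping.
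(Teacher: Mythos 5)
Your proposal is correct and follows essentially the same route as the paper: the same Lyapunov function with the identity $G^k_i=-M(\xki-\xkz)$, the same combination of Lemmas~\ref{lemma:PG-finite-sum-nonsmooth}, \ref{lemma:svr-f-new} and~\ref{lemma: descent-Finite-nonsmooth}, Young's inequality for the memory term, and the recursion~\eqref{eqn:ct-finite-nonsmooth} to obtain the one-step inequality $R^k_{i+1}\leq R^k_i-\gamma\,\E[\|\cGki\|^2]$ and telescope. The only difference is bookkeeping order (you rearrange Lemma~\ref{lemma:PG-finite-sum-nonsmooth} into a lower bound on $\|\tcGki\|^2$ and split $\tfrac{1}{4M}=3\gamma_i+c_{i+1}(1+\tau)$, while the paper adds a scaled copy of that inequality), yielding coefficients $\tfrac{5}{16}$ and $\tfrac{3}{16}$ instead of the paper's $\tfrac{11}{36}$ and $\tfrac{13}{72}$, both of which fit under the budgets $\tfrac13$ and $\tfrac15$.
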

\begin{proof}
For the ease of notation, we write the stochastic Lyapunov function as
$$R^k_i = \E\bigl[\Phi(\xki) + c_i\|G_i^k\|^2\bigr],$$
where
\begin{equation}\label{eqn:Gki-def}
    \Gki = \sum_{t=0}^{i-1}\tcGkt = -M(\xki - x_0^k).
\end{equation}
In particular, we have $G_{0}^k = -M(x_0^k-x_0^k) = 0$.
Moreover, we have 
\begin{eqnarray}
	\E\bigl[\|G_{i+1}^k\|^2\bigr] 
    &=& \E\bigl[\|G_{i}^k + \tcGki\|^2\bigr]   \nonumber \\
    &\leq& \left(1+\frac{1}{\tau}\right)\E\bigl[\|G_{i}^k\|^2\bigr] + (1+\tau)\E\bigl[\|\tcGki\|^2\bigr]. 
\label{eqn:Gki-tcG}
\end{eqnarray}
Combining Lemmas~\ref{lemma:PG-finite-sum-nonsmooth} 
and~\ref{lemma: descent-Finite-nonsmooth} yields
\begin{eqnarray*}
\E\bigl[\Phi(\xkie)\bigr] 
    &\leq& \E\bigl[\Phi(\xki)\bigr] - \frac{M - 2\ell_f L_g}{2M^2}\E\bigl[\bigl\|\tcGki\bigr\|^2\bigr] + \left(\frac{\ell_f L_g}{\sqrt{|\cB_i^k|}}+\frac{\ell_f L_g}{2|\cS_i^k|}\right)\E\bigl[\|\xki-x_0^k\|^2\bigr] \\
    &=& \E\bigl[\Phi(\xki)\bigr] - \frac{M - 2\ell_f L_g}{2M^2}\E\bigl[\bigl\|\tcGki\bigr\|^2\bigr] + \frac{1}{M^2}\left(\frac{\ell_f L_g}{\sqrt{|\cB_i^k|}}+\frac{\ell_f L_g}{2|\cS_i^k|}\right)\E\bigl[\|G^k_i\|^2\bigr],
\end{eqnarray*}
where in the last equality we used~\eqref{eqn:Gki-def}.
Adding both sides of~\eqref{eqn:Gki-tcG} to that of the above inequality 
and using the assumption $M\geq 4\ell_f L_g$, we obtain
\begin{eqnarray}
\E\bigl[\Phi(\xkie)+c_{i+1}\|G_{i+1}^k\|^2\bigr] 
    & \leq & \E\bigl[\Phi(\xki)\bigr] - \left(\frac{M-2\ell_f L_g}{2M^2} - c_{i+1}(1+\tau)\right)\E\bigl[\|\tcGki\|^2\bigr] \nonumber \\ 
    & & +\left(\frac{1}{M^2}\left(\frac{\ell_f L_g}{\sqrt{|\cB_i^k|}}+\frac{\ell_f L_g}{2|\cS_i^k|}\right) + c_{i+1}\left(1+\frac{1}{\tau}\right)\right)\E\bigl[\|G_i^k\|^2\bigr] \nonumber \\
    & \leq & \E\bigl[\Phi(\xki)\bigr] - \left(\frac{1}{4M} - c_{i+1}(1+\tau)\right)\E\bigl[\|\tcGki\|^2\bigr] \label{eqn:lyapunov-descent}\\ 
    & & +\left(\frac{1}{4M}\left(\frac{1}{\sqrt{|\cB_i^k|}}+\frac{1}{2|\cS_i^k|}\right) + c_{i+1}\left(1+\frac{1}{\tau}\right)\right)\E\bigl[\|G_i^k\|^2\bigr]. \nonumber
\end{eqnarray}
Next, combining Lemma~\ref{lemma:PG-finite-sum-nonsmooth} with Lemmas~\ref{lemma:svr-f-new} yields
\begin{eqnarray*}
    \frac{M - \ell_f L_g}{M^2}\E\bigl[\|\cGki\|^2\bigr]
    &\leq&
    \frac{2M+\ell_f L_g}{M^2}\E\bigl[\|\tcGki\|^2\bigr] 
     + \left( \frac{2\ell_f L_g}{\sqrt{|\cB_i^k|}} + \frac{2\ell_f L_g}{|\cS_i^k|} \right) \E\bigl[\|\xki-\xkz\|^2\bigr].
\end{eqnarray*}
Using the equality $G^k_i=-M(\xki-\xkz)$ and the assumption $M\geq 4\ell_f L_g$, the above inequality implies
\begin{eqnarray}
    \frac{3}{4M}\E\bigl[\|\cGki\|^2\bigr]
    &\leq&
    \frac{9}{4M}\E\bigl[\|\tcGki\|^2\bigr] 
     +\, \frac{1}{2M} \left( \frac{1}{\sqrt{|\cB_i^k|}} + \frac{1}{|\cS_i^k|} \right) \E\bigl[\|G^k_i\|^2\bigr].
    \label{eqn:cG-bound}
\end{eqnarray}
Multiplying both sides of~\eqref{eqn:cG-bound} by $\left(\frac{1}{4M}-c_{i+1}(1+\tau)\right)\big/\frac{9}{4M}$, which is positive by the assumption $\gamma>0$, and adding the resulting inequality to~\eqref{eqn:lyapunov-descent}, we get
\begin{eqnarray}
\E\bigl[\Phi(\xkie)+c_{i+1}\|G_{i+1}^k\|^2\bigr] 
&\leq& \E\left[\Phi(\xki)+\left( c_{i+1}\left(1+\frac{1}{\tau}\right)+ \frac{1}{3M\sqrt{|\cB_i^k|}} + \frac{1}{5M|\cS_i^k|}\right)\|G_i^k\|^2\right] \nonumber\\
	&& -\frac{1}{3}\left(\frac{1}{4M} - c_{i+1}(1+\tau)\right)\E[\|\cGki\|^2].\nonumber
\end{eqnarray}
Now, using the definitions in~\eqref{eqn:lyapunov-def} and~\eqref{eqn:ct-finite-nonsmooth}, the above inequality is the same as
\[
	\frac{1}{3}\left(\frac{1}{4M} - c_{i+1}(1+\tau)\right)\E\bigl[\|\cGki\|^2\bigr] ~\leq~  R_i^k  - R_{i+1}^k.
\]
Recalling the definition of $\gamma$ and summing up the above inequality over~$i$ from~$0$ to~$\tau-1$, we get
\[
	\gamma\sum_{i=0}^{\tau-1}\E\bigl[\|\cGki\|^2\bigr]
    ~\leq~ R_0^k-R_\tau^k
    ~=~\E\bigl[\Phi(x_0^k)\bigr] - \E\bigl[\Phi(x_\tau^k)\bigr] ,
\]
where the last equality is due to the observations that 
$c_\tau=0$ and $G^k_0=0$. 
Finally, dividing both sides by $\gamma$ and using $x^{k+1}_0 = x^k_\tau$
give the desired result.
\end{proof}

The next lemma shows how to choose the inner loop length~$\tau$ and the two mini-batch sizes $|\Bki|$ and $|\Ski|$ to ensure $\gamma>0$.
We use $\lceil\cdot\rceil$ to denote the nearest integer from above.
\begin{lemma}
\label{lemma:gamma-nonsmooth}
    If we choose 
    $\tau = \bigl\lceil\frac{1}{2}N^{1/5}-1\bigr\rceil$,
    $|\cB_i^k| = \bigr\lceil 4N^{4/5}\bigr\rceil$ and
    $|\cS_i^k| = \bigr\lceil N^{2/5}\bigr\rceil$ for $i=1,\ldots,\tau-1$,
    then $\gamma  \geq \frac{1}{15M}$.
\end{lemma}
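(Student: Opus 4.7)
Under the lemma's parameter choices, the quantity
$$a \,\triangleq\, \frac{1}{3M\sqrt{|\Bki|}} + \frac{1}{5M|\Ski|}$$
does not depend on $i$, so the recursion~\eqref{eqn:ct-finite-nonsmooth} collapses to a first-order linear recursion $c_i = (1+1/\tau)c_{i+1} + a$ with constant forcing term. Unrolling from $c_\tau = 0$ yields the closed form
$$c_i \;=\; a\tau\bigl((1+1/\tau)^{\tau-i} - 1\bigr), \qquad i=0,1,\ldots,\tau.$$
From this expression $c_i$ is decreasing in $i$, so the minimum in the definition of $\gamma$ over $i=0,\ldots,\tau-1$ is dominated by the largest $c_{i+1}$, namely $c_1$. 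Moreover, the target bound $\gamma \geq \frac{1}{15M}$ is \emph{equivalent} to $c_1(1+\tau) \leq \frac{1}{20M}$, since $\frac{1}{3}\bigl(\frac{1}{4M}-\frac{1}{20M}\bigr)=\frac{1}{15M}$. So the whole lemma reduces to verifying this single scalar inequality.

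Next I would bound each factor of $c_1(1+\tau) = a\tau(1+\tau)\bigl((1+1/\tau)^{\tau-1}-1\bigr)$ using the chosen parameters. From $|\Bki|\geq 4N^{4/5}$ and $|\Ski|\geq N^{2/5}$ I obtain $a \leq \frac{1}{6MN^{2/5}}+\frac{1}{5MN^{2/5}} = \frac{11}{30MN^{2/5}}$. From $\tau = \lceil \tfrac{1}{2}N^{1/5}-1\rceil$ and the identity $\lceil x\rceil \leq x+1$ I obtain $\tau \leq \tfrac{1}{2}N^{1/5}$ and $\tau+1 \leq \tfrac{1}{2}N^{1/5}+1$, which (for $N$ large enough that $\tau \geq 1$) gives $\tau(1+\tau)\leq \tfrac{1}{4}N^{2/5}+\tfrac{1}{2}N^{1/5}$. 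Finally, I would use the classical inequality $(1+1/\tau)^{\tau}\leq e$ together with $(1+1/\tau)^{\tau-1}=(1+1/\tau)^{\tau}/(1+1/\tau)\leq e\tau/(\tau+1)$ to get the refined bound $(1+1/\tau)^{\tau-1}-1 \leq ((e-1)\tau-1)/(\tau+1)$. Multiplying the three bounds and simplifying, $c_1(1+\tau)$ reduces to an absolute constant over $M$, and the claim $\gamma\geq \frac{1}{15M}$ follows by numerical verification.

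The main obstacle is that all the $O(1)$ constants are tight: the bounds on $a$, $\tau(1+\tau)$, and $(1+1/\tau)^{\tau-1}-1$ each lose a small factor, and these losses compound. To close the argument I would likely need to use the sharper estimate $(1+1/\tau)^{\tau-1}-1 \leq ((e-1)\tau-1)/(\tau+1)$ rather than the cruder $e-1$, which produces the cancellation
$$c_1(1+\tau) \;\leq\; a\,(e-1)\tau^2 \;\leq\; \tfrac{11(e-1)}{120M},$$
and then to double-check this against $\tfrac{1}{20M}$; a separate small-$N$ case (where $\tau\leq 1$ and hence $c_1=0$) handles the boundary so that the asymptotic argument only needs to be run for $\tau\geq 2$. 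If the refined estimate still leaves no slack, the cleanest fix would be to absorb the residual factor by slightly increasing the constants in $|\Bki|$ and $|\Ski|$, leaving the orders $N^{4/5}$ and $N^{2/5}$ unchanged, which is all that the sample-complexity analysis in the sequel uses.
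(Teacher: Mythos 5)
Your strategy is the same as the paper's: the paper also observes that the forcing term in \eqref{eqn:ct-finite-nonsmooth} is constant, rewrites the recursion as $(c_i+C)=(c_{i+1}+C)\bigl(1+\frac{1}{\tau}\bigr)$ with $C=\frac{\tau}{3M\sqrt{|\Bki|}}+\frac{\tau}{5M|\Ski|}$, unrolls from $c_\tau=0$, bounds $\bigl(1+\frac{1}{\tau}\bigr)^{\tau}\leq e$, and substitutes the parameter choices. The genuine gap is that your argument, pushed to its own conclusion, does not close: your equivalence $\gamma\geq\frac{1}{15M}\iff c_1(1+\tau)\leq\frac{1}{20M}$ is correct (since $c_i$ is decreasing in $i$, the minimum defining $\gamma$ is attained at $i=0$), but your final estimate is $c_1(1+\tau)\leq a(e-1)\tau^2\leq\frac{11(e-1)}{120M}\approx\frac{0.158}{M}$, which is about three times larger than $\frac{1}{20M}$. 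Ending with ``double-check this against $\frac{1}{20M}$, and if it leaves no slack, enlarge the batch constants'' leaves the stated lemma unproved; as a proof of the statement as written, the last step fails.

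It is worth saying, however, that your computation exposes a problem with the statement rather than with your method. Since $\bigl(1+\frac{1}{\tau}\bigr)^{\tau-1}\to e$ and $a\,\tau(1+\tau)\to\frac{11}{120M}$ as $N\to\infty$, the exact value of $\gamma$ under these choices tends to $\frac{1}{3M}\bigl(\frac14-\frac{11(e-1)}{120}\bigr)\approx\frac{1}{32.4M}$, so no sharpening of the estimates can yield $\frac{1}{15M}$ with the given $\tau$, $|\Bki|$, $|\Ski|$. The paper's own chain of inequalities, using the cruder bounds $e-1\leq 2$ and $\tau(1+\tau)\leq(1+\tau)^2$, arrives at $\gamma\geq\frac{1}{3M}\bigl(\frac14-\frac{11}{60}\bigr)=\frac{1}{45M}$, and the final claim $\frac{1}{15M}$ appears to drop the factor $\frac13$ from the definition of $\gamma$. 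So the correct resolutions are exactly the two you name: either weaken the conclusion to $\gamma\geq\frac{1}{45M}$, which only changes the constant $15$ in \eqref{eqn:cvg-svr-pl-nonsmooth-1} to $45$ and leaves the $\cO(N+N^{4/5}\epsilon^{-1})$ sample complexity untouched, or enlarge the absolute constants in $|\Bki|$ and $|\Ski|$ while keeping the orders $N^{4/5}$ and $N^{2/5}$. Had you committed to one of these fixes (or proved the weaker constant outright), your write-up would be complete.
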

\begin{proof}
To simplify notation, let $B=|\Bki|$ and $S=|\Ski|$ for $i=1,\ldots,\tau-1$.
From \eqref{eqn:ct-finite-nonsmooth}, we deduce
\[
    (c_i+C) = (c_{i+1}+C)\left(1+\frac{1}{\tau}\right), \qquad
        \mbox{where}\quad C = \frac{\tau}{3M\sqrt{B}} + \frac{\tau}{5MS}.
\] 
Consequently, with $c_\tau=0$, we have for all $i=1,\ldots,\tau$, 
\[
    c_i ~=~ (c_\tau + C)\left(1+\frac{1}{\tau}\right)^{\tau-i}\!\! -C
    ~\leq~  C\left(1+\frac{1}{\tau}\right)^{\tau} - C
    ~\leq~ Ce -C ~=~ C(e-1),
\] 
where the last inequality is due to the fact that 
$(1+1/\tau)^\tau\leq e$ with $e$ is Euler's number (the basis of natural logarithm). Therefore, 
\begin{eqnarray*}
	\gamma  
    & =  & \min_{0\leq i\leq \tau-1}\frac{1}{3}\left(\frac{1}{4M} - c_{i+1}(1+\tau)\right)\\
    & \geq & \frac{1}{3}\left(\frac{1}{4M} - C(e-1)(1+\tau)\right)\\
    & = & \frac{1}{3M}\left(\frac{1}{4} - \left( \frac{1}{3\sqrt{B}} + \frac{1}{5S}\right)(e-1)\tau(1+\tau)\right) \\
    & \geq & \frac{1}{3M}\left(\frac{1}{4} - \left( \frac{1}{3\sqrt{B}} + \frac{1}{5S}\right)2(1+\tau)^2\right).
\end{eqnarray*}
Finally, setting $\tau=\frac{1}{2}N^{1/5}-1$, $B=4N^{4/5}$ and $S=N^{2/5}$ yields $\gamma\geq\frac{1}{15M}$.
\end{proof}  

Combining Lemma~\ref{lemma:Lyapunov-finite-nonsmooth} and Lemma~\ref{lemma:gamma-nonsmooth}, we arrive at the main result of this section.
\begin{theorem}
	\label{theorem:svrg}
    Suppose Assumptions~\ref{assumption:f-cvx}, \ref{assumption:phi-lowerbound}  and~\ref{assumption:g-lip-finite} hold for problem~\eqref{eqn:composite-finite}. Let the estimates $\tgkz$, $\tJkz$, $\tgki$ and $\tJki$ in Algorithm~\ref{algo:SVR-PL} be given in~\eqref{eqn:finite-gk0}-\eqref{eqn:J-svrg} respectively. If we choose $M\geq 4\ell_f L_g$ and $\tau = \bigl\lceil\frac{1}{2}N^{1/5}-1\bigr\rceil$, and
\[
    |\cB_i^k| = \bigr\lceil 4N^{4/5}\bigr\rceil, \qquad
    |\cS_i^k| = \bigr\lceil N^{2/5}\bigr\rceil, \qquad 
    i=1,\ldots,\tau-1,  \qquad k=1,\ldots,K,
\]
then the output of Algorithm~\ref{algo:SVR-PL} satisfies
\begin{equation}\label{eqn:cvg-svr-pl-nonsmooth-1}
    \E\bigl[\|\cG(x_{i^*}^{k^*})\|^2\bigr] ~\leq~ \frac{15M\bigl(\Phi(x_0^1) - \Phi_*\bigr)}{K\tau}.
\end{equation}
To get an $\epsilon$-stationary point in expectation, the total sample complexity for the component mappings $g_j$ and their Jacobians are both $\cO(N+N^{4/5}\epsilon^{-1})$.
\end{theorem}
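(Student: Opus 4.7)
The plan is to assemble the theorem from the two main lemmas already in hand, namely Lemma~\ref{lemma:Lyapunov-finite-nonsmooth} (the per-epoch descent) and Lemma~\ref{lemma:gamma-nonsmooth} (the lower bound on $\gamma$), and then do a straightforward counting argument for the sample complexity. Essentially all of the real work has been done; the theorem is a packaging step.

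First I would telescope Lemma~\ref{lemma:Lyapunov-finite-nonsmooth} over the outer iterations. Since $x_0^{k+1} = x_\tau^k$, summing the bound
\[
    \sum_{i=0}^{\tau-1} \E\bigl[\|\cGki\|^2\bigr] \leq \frac{\E[\Phi(x_0^k)] - \E[\Phi(x_0^{k+1})]}{\gamma}
\]
over $k=1,\ldots,K$ collapses into
\[
    \sum_{k=1}^K\sum_{i=0}^{\tau-1} \E\bigl[\|\cGki\|^2\bigr] \leq \frac{\Phi(x_0^1) - \E[\Phi(x_0^{K+1})]}{\gamma} \leq \frac{\Phi(x_0^1) - \Phi_*}{\gamma},
\]
where the final inequality invokes Assumption~\ref{assumption:phi-lowerbound}. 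Since the output is drawn uniformly at random from the $K\tau$ iterates, the left-hand side equals $K\tau\cdot\E[\|\cG(x_{i^*}^{k^*})\|^2]$. Dividing and plugging in the bound $\gamma \geq \frac{1}{15M}$ from Lemma~\ref{lemma:gamma-nonsmooth} (whose hypotheses are precisely the choices of $\tau$, $|\Bki|$, $|\Ski|$ in the theorem) yields~\eqref{eqn:cvg-svr-pl-nonsmooth-1}.

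For the sample-complexity conclusion, I would read off from \eqref{eqn:cvg-svr-pl-nonsmooth-1} that $K\tau = \cO(\epsilon^{-1})$ suffices to guarantee $\E[\|\cG(x_{i^*}^{k^*})\|^2]\leq\epsilon$, i.e. $K = \cO(\epsilon^{-1}/\tau) = \cO(N^{-1/5}\epsilon^{-1})$. Within each outer iteration, the component-mapping evaluations total
\[
    \underbrace{N}_{\text{full batch at }i=0} \;+\; \tau\cdot|\Bki| \;=\; N + \cO\bigl(N^{1/5}\cdot N^{4/5}\bigr) \;=\; \cO(N),
\]
and the Jacobian evaluations total $N + \tau\cdot|\Ski| = N + \cO(N^{1/5}\cdot N^{2/5}) = \cO(N)$. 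Multiplying by $K$ and adding the single $x_0^1$ initialization gives $K\cdot\cO(N) = \cO(N^{4/5}\epsilon^{-1})$ for each. Combined with the requirement $K\geq 1$ (which contributes the $\cO(N)$ base cost), the total sample complexity for both $g_j$ and $g'_j$ is $\cO(N + N^{4/5}\epsilon^{-1})$.

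There is no real obstacle: the choices of $\tau$, $|\Bki|$, $|\Ski|$ in Lemma~\ref{lemma:gamma-nonsmooth} were evidently reverse-engineered so that $K\cdot(\mathrm{per\text{-}epoch\ cost})$ and the constraint $K\tau \gtrsim \epsilon^{-1}$ balance to the same $N^{4/5}\epsilon^{-1}$ rate for both mappings and Jacobians. The only cosmetic care needed is keeping the ceilings $\lceil\cdot\rceil$ in mind (they add at most a constant factor) and noting that the assumption $M\geq 4\ell_f L_g$ is precisely what Lemma~\ref{lemma:Lyapunov-finite-nonsmooth} requires, so its hypotheses are met without further work.
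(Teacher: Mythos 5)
Your proposal is correct and follows essentially the same route as the paper: telescope Lemma~\ref{lemma:Lyapunov-finite-nonsmooth} over the epochs, invoke Lemma~\ref{lemma:gamma-nonsmooth} for $\gamma\geq\frac{1}{15M}$, use the uniform random choice of output, and then count $KN + K\tau|\cB_i^k|$ (resp.\ $K\tau|\cS_i^k|$) samples with $K=\cO(N^{-1/5}\epsilon^{-1})$. Your explicit remark that the standalone $\cO(N)$ term comes from $K\geq 1$ is a slightly cleaner way to state what the paper leaves implicit, but it is the same argument.
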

\begin{proof}
	Summing up the inequality \eqref{lm:Lyapunov-finite-nonsmooth-1} 
    over~$k$ from~$1$ to~$K$ and using the fact $\Phi(x^K_\tau)>\Phi_*$, we get
	\begin{eqnarray*}
		\sum_{k=1}^K\sum_{i=0}^{\tau-1}\E\bigl[\|\cGki\|^2\bigr] ~\leq~ \frac{\Phi(x_0^1) - \Phi(x^*)}{\gamma}.
	\end{eqnarray*}
	By the random choice of the output $x_{i^*}^{k^*}$, we can get the inequality \eqref{eqn:cvg-svr-pl-nonsmooth-1}.
	
    To get an $\epsilon$-stationary point in expectation, we need to set $K\tau = \cO(\epsilon^{-1})$, which implies 
\[
    K=\cO(\tau^{-1}\epsilon^{-1})=\cO(N^{-1/5}\epsilon^{-1}).
\] 
Consequently, the sample complexity of the component mappings (the $g_i$'s) is 
\[
    KN + K\tau B 
    ~=~ \cO(N^{-1/5}\epsilon^{-1})\cdot N + \cO(\epsilon^{-1}) \cdot 4 N^{4/5} 
    ~=~ \cO(N+N^{4/5}\epsilon^{-1}).
\]
and the sample complexity for the component Jacobians is
\[
    KN + K\tau S 
    ~=~ \cO(N^{-1/5}\epsilon^{-1})\cdot N + \cO(\epsilon^{-1}) \cdot N^{2/5} 
    ~=~ \cO(N+N^{4/5}\epsilon^{-1}).
\]
This completes the proof.
\end{proof}

\begin{remark}\label{remark:dependent-batches}
		 Up to this point, we notice that all the analysis leading to Theorem \ref{theorem:svrg} only requires the sample batches between iterations to be independent. Whereas within each iteration we do not require the independence between $\cB_i^k$ and $\cS_i^k$. Therefore, in practice one can simply use the same mini-batch $\cB_i^k = \cS_i^k$ to estimate both $\tilde g_i^k$ and $\tilde J_i^k$, with batch size equal to $\max\{S,B\} = \lceil4N^{4/5}\rceil$.
Or, we can use a random subset of~$\mathcal{B}_i^k$ of size $\lceil N^{2/5}\rceil$ to compute $\tilde J_i^k$ in order to save computation.
\end{remark}

\begin{remark}\label{remark:nonsmooth-finite}
The nonsmooth and finite-sum case is also considered in \cite{tran2020stochastic}. But their results are limited to using the simple mini-batch scheme for both
component mapping and Jacobian estimation.
As a consequence, their sample complexities for the component mappings and their Jacobians are $\cO(\epsilon^{-3})$ and $\cO(\epsilon^{-2})$ respectively, without explicit dependence on~$N$.
They are similar to our results in Section~\ref{sec:mini-batch} on using mini-batches when~$g$ is a general expectation.
\end{remark}

\section{The nonsmooth and expectation case}
\label{sec:nonsmooth-expect}

In this section, we consider the composite stochastic optimization 
problem~\eqref{eqn:composite-expect}, which we repeat here for convenience:
\[
    \minimize_x ~~\Phi(x)\triangleq f(g(x)) + h(x), \quad 
    \mbox{where}\quad g(x) = \E_\xi \bigl[ g_\xi(x) \bigr].
\]
We assume that~$f$ and~$h$ satisfy Assumption~\ref{assumption:f-cvx} and 
the $g_\xi$'s satisfy the following assumption.
\begin{assumption}
	\label{assumption:g-lip-infinite}
The random mappings $g_\xi:\R^n\to\R^m$ and their Jacobians are mean-squares Lipschitz continuous, i.e., there exist constants $\ell_g$ and $L_g$ such that for all $x, y\in\dom h$, 
\begin{eqnarray*}
    \E\bigl[\|g_\xi(x) - g_\xi(y)\|^2\bigr] &\leq& \ell_g^2\|x-y\|^2,\\
    \E\bigl[\|g'_\xi(x) - g'_\xi(y)\|^2\bigr] &\leq& L_g^2\|x-y\|^2.
\end{eqnarray*}
    Furthermore, there exist constants $\sigma_g^2$ and $\sigma_{g'}^2$ such that for all $x\in\dom h$,
\begin{eqnarray*}
    \E\bigl[\|g_\xi(x) - g(x)\|^2\bigr] &\leq& \sigma_g^2,\\
    \E\bigl[\|g'_\xi(x) - g'(x)\|^2\bigr] &\leq& \sigma_{g'}^2.
\end{eqnarray*}
\end{assumption}
Assumption~\ref{assumption:g-lip-infinite} implies Assumption~\ref{assumption:g-lip}, but is weaker than assuming that $g_\xi$ and $g'_\xi$ are almost surely $\ell_g$- and $L_g$-Lipschitz respectively.

In this case, the first-order correction used in~\eqref{eqn:g-svrg-correction} is no longer useful in reducing the estimation errors because we cannot evaluate $g(\xkz)$ or $g'(\xkz)$ accurately. 
Instead, we turn to the SARAH/\textsc{Spider} estimator developed in 
\cite{SARAH-1,fang2018spider}.
But before doing that, we first examine the simple mini-batch scheme outlined in~\eqref{eqn:mini-batch} and~\eqref{eqn:stoch-prox-linear}.

\subsection{The simple mini-batch method}
\label{sec:mini-batch}

\begin{algorithm2e}[t]
    \caption{Simple mini-batch prox-linear algorithm}
	\label{algo:mini-batch-PL} 
	{\bf input:} initial point $x_0$, parameter $M>0$, and number of iterations $T$.\\
	\For{$i = 0,\ldots,T-1$}{ 
        sample mini-batches $\cB_i$ and $\cS_i$ from distribution of~$\xi$, 
        and compute $\tgi$ and $\tJi$ as in~\eqref{eqn:Lrg-batch-infinite}.\\
        $x_{i+1} = \displaystyle\argmin_x ~\textstyle\Bigl\{f\bigl(\tgi + \tJi(x-x_i)\bigr) + h(x) + \frac{M}{2}\|x-x_i\|^2\Bigr\}$. 
	}  
    {\bf output:} choose $x_{i^*}$ from $\{x_0,x_1,\ldots,x_{T-1}\}$ uniformaly at random.
\end{algorithm2e}   

The simple mini-batch method is to run Algorithm~\ref{algo:SVR-PL} with only one epoch ($K=1$) and $\tau=T$ iterations, where during each iteration we set
\begin{equation}
\label{eqn:Lrg-batch-infinite}
\tgi =  \frac{1}{|\cB_i|}\sum_{\xi \in\cB_i} g_\xi(x_i), \qquad\mbox{and} \qquad
\tJi = \frac{1}{|\cS_i|}\sum_{\xi \in\cS_i} g'_{\xi}(x_i).
\end{equation}
Since there is only one epoch, we omit the superscript~$k$ on $\xki$, $\tgki$ and $\tJki$ to write $x_i$, $\tgi$ and $\tJi$. Similar to Remark \ref{remark:dependent-batches}, we do not require the independence between $\cB_i$ and $\cS_i$. For clarity, we present the resulting method as Algorithm~\ref{algo:mini-batch-PL}.
The following complexity result holds.
\begin{theorem}\label{thm:nonsmooth-infinite-mini-batch}
    Suppose Assumptions~\ref{assumption:f-cvx}, \ref{assumption:phi-lowerbound} and~\ref{assumption:g-lip-infinite} hold for problem~\eqref{eqn:composite-expect}.
    If we choose $M\geq 4\ell_f L_g$ and the batch sizes  
    $|\cB_i| = B \geq \frac{36\ell_f^2\sigma_g^2}{\epsilon^2}$ and 
    $|\cS_i| = S \geq \frac{2\ell_f\sigma_{g'}^2}{L_g\epsilon}$, 
    then the output $x_{i^*}$ of Algorithm~\ref{algo:mini-batch-PL} satisfies 
\begin{equation}\label{eqn:thm-nonsmooth-infinite}
    \E\bigl[\|\cG(x_{i^*})\|^2\bigr]\leq 12 M \left(\frac{\Phi(x_0)-\Phi_*}{T} + \epsilon\right).
\end{equation}
Consequently by setting $T = \cO(\epsilon^{-1})$, the sample complexities for the component mappings $g_\xi$ and their Jacobians for getting an $\epsilon$-solution are $\cO(\epsilon^{-3})$ and $\cO(\epsilon^{-2})$ respectively.
\end{theorem}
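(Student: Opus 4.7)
The plan is to reuse two already-proven ingredients: the per-iteration descent inequality of Lemma~\ref{lemma: descent-Finite-nonsmooth} and the bound between the computable approximate gradient mapping $\widetilde{\cG}$ and the true gradient mapping $\cG$ from Lemma~\ref{lemma:PG-finite-sum-nonsmooth}. Since Algorithm~\ref{algo:mini-batch-PL} is exactly a one-epoch instance of Algorithm~\ref{algo:SVR-PL} with the simple mini-batch estimators~\eqref{eqn:Lrg-batch-infinite}, both lemmas apply verbatim after dropping the outer index~$k$. The approximation errors $\|\tgi - g(x_i)\|$ and $\|\tJi - g'(x_i)\|^2$ will be controlled by the standard $\cO(1/B)$ and $\cO(1/S)$ mini-batch variance bounds that follow from Assumption~\ref{assumption:g-lip-infinite} and i.i.d.\ sampling.

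First I would take expectations in the descent inequality. Under $M \geq 4\ell_f L_g$ we have $\frac{M-2\ell_f L_g}{2M^2} \geq \frac{1}{4M}$, so Lemma~\ref{lemma: descent-Finite-nonsmooth} gives
\[
    \frac{1}{4M}\,\E\bigl[\|\widetilde{\cG}(x_i)\|^2\bigr] \leq \E[\Phi(x_i)] - \E[\Phi(x_{i+1})] + 2\ell_f\,\E\bigl[\|\tgi - g(x_i)\|\bigr] + \frac{\ell_f}{2L_g}\,\E\bigl[\|\tJi - g'(x_i)\|^2\bigr].
\]
From Assumption~\ref{assumption:g-lip-infinite} and the independence of the samples in $\cB_i$ and $\cS_i$, one has $\E[\|\tJi - g'(x_i)\|^2 \mid x_i] \leq \sigma_{g'}^2/S$ and, by Jensen's inequality, $\E[\|\tgi - g(x_i)\| \mid x_i] \leq \sigma_g/\sqrt{B}$. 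Summing the descent inequality over $i=0,\ldots,T-1$, using Assumption~\ref{assumption:phi-lowerbound} to telescope the $\Phi$-differences, and dividing by $T$ produces a bound on $\frac{1}{T}\sum_i\E[\|\widetilde{\cG}(x_i)\|^2]$ of the form $\frac{4M(\Phi(x_0)-\Phi_*)}{T} + \frac{8M\ell_f\sigma_g}{\sqrt{B}} + \frac{2M\ell_f\sigma_{g'}^2}{L_g S}$.

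Next, to translate this into a bound on the true gradient mapping, I would invoke Lemma~\ref{lemma:PG-finite-sum-nonsmooth}: with $M \geq 4\ell_f L_g$ it rearranges to $\|\cG(x_i)\|^2 \leq 3\,\|\widetilde{\cG}(x_i)\|^2 + c_1 M\ell_f\,\|\tgi - g(x_i)\| + c_2\,(M\ell_f/L_g)\,\|\tJi - g'(x_i)\|^2$ for explicit absolute constants $c_1,c_2$. Averaging over $i$, taking expectations, and substituting the telescoped bound together with the mini-batch variance estimates yields an overall inequality
\[
    \E\bigl[\|\cG(x_{i^*})\|^2\bigr] \leq \frac{12M(\Phi(x_0)-\Phi_*)}{T} + \alpha\,\frac{M\ell_f\sigma_g}{\sqrt{B}} + \beta\,\frac{M\ell_f\sigma_{g'}^2}{L_g S},
\]
for numerical constants $\alpha,\beta$. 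The hypotheses $B \geq 36\ell_f^2\sigma_g^2/\epsilon^2$ and $S \geq 2\ell_f\sigma_{g'}^2/(L_g\epsilon)$ are calibrated so that $\sigma_g/\sqrt{B} \leq \epsilon/(6\ell_f)$ and $\sigma_{g'}^2/(L_g S) \leq \epsilon/(2\ell_f)$, forcing the last two terms to sum to at most $12M\epsilon$ and producing~\eqref{eqn:thm-nonsmooth-infinite}. Setting $T = \cO(\epsilon^{-1})$ then makes both summands of~\eqref{eqn:thm-nonsmooth-infinite} $\cO(\epsilon)$, and the totals $TB = \cO(\epsilon^{-3})$ and $TS = \cO(\epsilon^{-2})$ give the claimed sample complexities.

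The only real obstacle here is arithmetic bookkeeping: the specific constants $36$ and $2$ in the batch-size hypotheses are dictated by the coefficients produced by the descent inequality, the $\widetilde{\cG}\to\cG$ conversion, and the elementary mini-batch variance bound, and one has to combine them carefully to arrive at exactly $12M$ in the final coefficient. No new analytic idea beyond the two reused lemmas is required.
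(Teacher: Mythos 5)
Your proposal is correct and follows essentially the same route as the paper: both rest on Lemma~\ref{lemma: descent-Finite-nonsmooth}, Lemma~\ref{lemma:PG-finite-sum-nonsmooth}, the mini-batch variance bounds with Jensen's inequality, and telescoping, and your constants do close (with $\alpha=88/3$, $\beta=26/3$ the error terms sum to about $9.2M\epsilon\leq 12M\epsilon$). The only cosmetic difference is bookkeeping order: the paper cancels the $\|\widetilde{\cG}(x_i)\|^2$ terms per iteration by adding the two inequalities with the weight $\frac{M-2\ell_f L_g}{2(2M+\ell_f L_g)}$, whereas you first telescope the descent bound on $\widetilde{\cG}$ and then convert to $\cG$ via the factor-$3$ estimate.
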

\begin{proof}
    From the construction of $\tgi$ and $\tJi$ in~\eqref{eqn:Lrg-batch-infinite}, we have $\E[\tgi]=g(x_i)$ and $\E[\tJi]=g'(x_i)$. Moreover, by Assumption~\ref{assumption:g-lip-infinite}, we have
\[
\E\bigl[\|\tgi-g(x_i)\|^2\bigr]\leq \frac{\sigma_g^2}{B}, \qquad
\E\bigl[\|\tJi-g'(x_i)\|^2\bigr]\leq \frac{\sigma_{g'}^2}{S}.
\] 
Using Jensen's inequality, the variance bound on $\tgi$ further implies that $\E\bigl[\|\tgi-g(x_i)\|\bigr]\leq \frac{\sigma_g}{\sqrt{B}}$. 
Together with Lemma~\ref{lemma: descent-Finite-nonsmooth},  we have 
\begin{equation}\label{eqn:tcG-Phi}
	\frac{M-2\ell_f L_g}{2M^2}\E\bigl[\|\tcGki\|^2\bigr] ~\leq~ \E[\Phi(x_i)] - \E[\Phi(x_{i+1})]  + \frac{2\ell_f\sigma_g}{\sqrt{B}} + \frac{\ell_f\sigma_{g'}^2}{2L_gS}. 
\end{equation}
On the other hand, applying Lemma~\ref{lemma:PG-finite-sum-nonsmooth} yields
\begin{equation}\label{eqn:cG-tcG}
\frac{M-\ell_f L_g}{M^2}\E\bigl[\|\cGki\|^2\bigr]
~\leq~ \frac{2M+\ell_f L_g}{M^2}\E\bigl[\|\tcGki\|^2\bigr] 
+ \frac{4\ell_f\sigma_g}{\sqrt{B}} + \frac{2\ell_f\sigma_{g'}^2}{L_gS}.
\end{equation}
Next, we multiply both sides of~\eqref{eqn:cG-tcG} by $\frac{M-2\ell_f L_g}{2(2M+\ell_f L_g)}$ and add them to~\eqref{eqn:tcG-Phi} to cancel the terms containing $\E\bigl[\|\tcGki\|^2\bigr]$.
Then with $M\geq 4\ell_f L_g$, we have $\frac{M-2\ell_f L_g}{2(2M+\ell_f L_g)}\in\bigl[\frac{1}{9},\frac{1}{4}\bigr]$ and obtain 
\[
\frac{1}{12M}\E[\|\cG(x_i)\|^2] \leq \E[\Phi(x_i)] - \E[\Phi(x_{i+1})]  + \frac{3\ell_f\sigma_g}{\sqrt{B}}+ \frac{\ell_f\sigma_{g'}^2}{L_gS}.
\]
Summing up the above inequality over~$i$ from~$0$ to~$T-1$ and dividing by~$T$, we obtain
\[
    \frac{1}{T}\sum_{i=0}^{T-1}\E\bigl[\|\cG(x_i)\|^2\bigr] 
    ~\leq~ 12M\left(\frac{\Phi(x_0) - \Phi_*}{T}  + \frac{3\ell_f\sigma_g}{\sqrt{B}}+ \frac{\ell_f\sigma_{g'}^2}{L_gS}\right).
\]
Finally, using $|\cB_i| = B \geq \frac{36\ell_f^2\sigma_g^2}{\epsilon^2}$ and 
$|\cS_i| = S \geq \frac{2\ell_f\sigma_{g'}^2}{L_g\epsilon}$ 
yields~\eqref{eqn:thm-nonsmooth-infinite}.
The sample complexities for $g_\xi$ and $g'_\xi$ can be obtained as $TB=\cO(\epsilon^{-3})$ and $TS=\cO(\epsilon^{-2})$ respectively.
\end{proof}

\subsection{Using the SARAH/SPIDER estimator}

In this section, we show that by using the SARAH/\textsc{Spider} estimator \cite{SARAH-1,fang2018spider}, the sample complexities for the component mappings and Jacobians can be improved to $\cO(\epsilon^{-5/2})$ and $\cO(\epsilon^{-3/2})$, respectively. 
We note that for solving problem~\eqref{eqn:composite-expect} when~$f$ is nonsmooth and convex (more generally weakly convex),
even the $\cO(\epsilon^{-3})$ and $\cO(\epsilon^{-2})$ sample complexities established in Theorem~\ref{thm:nonsmooth-infinite-mini-batch} seem to be new in the literature.

The SARAH/\textsc{Spider} estimators for Algorithm~\ref{algo:SVR-PL} are constructed as follows. 
For $i=0$, we set
\begin{equation}
\label{eqn:g-J-spider-0}
\tgkz =  \frac{1}{|\Bkz|}\sum_{\xi \in\Bkz} g_\xi(\xkz), \qquad\mbox{and} \qquad \tJkz = \frac{1}{|\Skz|}\sum_{j \in\Skz} g'_{\xi}(\xkz).
\end{equation}
For the rest iterations with $i=1,\ldots,\tau-1$,
\begin{eqnarray}
    \tgki &=& \tgkim + \frac{1}{|\cB_i^k|}\sum_{\xi \in\cB_i^k} \bigl(g_\xi(\xki) - g_\xi(\xkim)\bigr),  \label{eqn:g-spider-infinite}\\
    \tJki &=& \tJkim + \frac{1}{|\cS_i^k|}\sum_{\xi \in\cS_i^k} \bigl(g'_\xi(\xki) - g'_\xi(\xkim)\bigr), \label{eqn:J-spider-infinite} 
\end{eqnarray}
Here $\Bki$ and $\Ski$ for $i=0,1,\ldots,\tau-1$ are  mini-batches sampled from the underlying distribution of the random variable~$\xi$. We require the batches $\Bki$ (and $\Ski$) to be independently sampled for different iterations, whereas in each iteration $\Bki$ and $\Ski$ can be dependent or even identical. 
The mean-squared estimation errors of the above estimators are bounded via the following lemma, which is adapted from \cite[Lemma~2]{SARAH-1} or \cite[Lemma~1]{fang2018spider}. 
A complete proof can be found in \cite[Lemma~1]{C-SARAH}.
\begin{lemma}
	\label{lemma:sarah-mse-infinite}
    Suppose Assumption~\ref{assumption:g-lip-infinite} holds and 
$\tgki$ and $\tJki$ are constructed through \eqref{eqn:g-J-spider-0}-\eqref{eqn:J-spider-infinite}. Then we have for $k=1,\ldots,K$ and $\tau=0,1,\ldots,\tau-1$,
\begin{eqnarray}
\!\!\!\!\!\!\!\!    \E\bigl[\|\tgki-g(\xki)\|^2\bigr] &\leq& \E\bigl[\|\tgkz-g(\xkz)\|^2\bigr] + \sum_{r=1}^{i}\frac{\ell_g^2}{|\cB^k_r|}\E\bigl[\|x_r^k-x_{r-1}^k\|^2\bigr],
    \label{eqn:g-spider-mse}\\
\!\!\!\!\!\!\!\!    \E\bigl[\|\tJki-g'(\xki)\|^2\bigr] &\leq&  \E\bigl[\|\tJkz-g'(\xkz)\|^2\bigr] + \sum_{r=1}^{i}\frac{L_g^2}{|\cS_r^k|}\E\bigl[\|x_r^k-x_{r-1}^k\|^2\bigr].
    \label{eqn:J-spider-mse}
\end{eqnarray}
\end{lemma}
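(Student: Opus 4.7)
The plan is to set up a one-step recursion on the mean-squared error of the SARAH/\textsc{Spider} estimator and then telescope. I will only write the argument for $\tgki$; the bound for $\tJki$ follows by the exact same reasoning with $g_\xi$ replaced by $g'_\xi$ and $\ell_g$ replaced by $L_g$.

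Define the error vector $e_i = \tgki - g(\xki)$ (I suppress the superscript $k$ for readability) and let $\cF_{i-1}$ denote the $\sigma$-algebra generated by all the random samples drawn up to and including iteration $i-1$ of the $k$-th epoch. Observe that $\xki$, $\tgkim$, and $g(\xkim)$ are all $\cF_{i-1}$-measurable, since $\xki$ is determined by $\tgkim$ and $\tJkim$ through~\eqref{eqn:xkie-def}. Using the update rule~\eqref{eqn:g-spider-infinite}, I can write $e_i = e_{i-1} + Y_i$, where
\[
  Y_i \,=\, \frac{1}{|\Bki|}\sum_{\xi\in\Bki}\bigl(g_\xi(\xki) - g_\xi(\xkim)\bigr) \,-\, \bigl(g(\xki)-g(\xkim)\bigr).
\]
Because the mini-batch $\Bki$ is drawn independently of $\cF_{i-1}$, I obtain $\E[Y_i\,|\,\cF_{i-1}]=0$. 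This is the key step: it makes $e_{i-1}$ and $Y_i$ conditionally uncorrelated, which in turn yields
\[
  \E\bigl[\|e_i\|^2 \,\big|\, \cF_{i-1}\bigr] \,=\, \|e_{i-1}\|^2 + \E\bigl[\|Y_i\|^2 \,\big|\, \cF_{i-1}\bigr].
\]

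Next I bound $\E[\|Y_i\|^2\,|\,\cF_{i-1}]$. Since $Y_i$ is the average of $|\Bki|$ i.i.d.\ (conditionally on $\cF_{i-1}$) zero-mean random vectors, its conditional second moment equals $1/|\Bki|$ times the conditional second moment of a single centered summand. Dropping the ``centering'' term only decreases the variance, so this is at most
\[
  \frac{1}{|\Bki|}\,\E\bigl[\|g_\xi(\xki)-g_\xi(\xkim)\|^2 \,\big|\, \cF_{i-1}\bigr]
  \,\leq\, \frac{\ell_g^2}{|\Bki|}\,\|\xki-\xkim\|^2,
\]
where the last inequality uses the mean-square Lipschitz condition on $g_\xi$ from Assumption~\ref{assumption:g-lip-infinite} (note that $\xki$ and $\xkim$ are $\cF_{i-1}$-measurable, so the conditional expectation acts only on $\xi$).

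Taking total expectation and substituting yields the one-step recursion
\[
  \E\bigl[\|e_i\|^2\bigr] \,\leq\, \E\bigl[\|e_{i-1}\|^2\bigr] + \frac{\ell_g^2}{|\Bki|}\,\E\bigl[\|\xki-\xkim\|^2\bigr].
\]
Iterating this bound from $r=1$ up to $r=i$ and recognizing $\E[\|e_0\|^2]=\E[\|\tgkz-g(\xkz)\|^2]$ produces the first claimed inequality. The Jacobian bound~\eqref{eqn:J-spider-mse} follows along identical lines, using the Lipschitz constant $L_g$ of $g'_\xi$ in place of $\ell_g$.

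The only delicate step is the measurability/independence bookkeeping that makes $\E[Y_i\,|\,\cF_{i-1}]=0$; once that is in place, the variance bound is a routine application of independence and the mean-square Lipschitz assumption. Since the proof is essentially identical to \cite[Lemma~1]{C-SARAH}, I would in the paper simply refer to that reference, as the authors do.
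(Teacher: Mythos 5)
Your proof is correct and follows exactly the standard SARAH/\textsc{Spider} mean-squared-error argument that the paper itself invokes by citation (Lemma~2 of the SARAH paper, Lemma~1 of \textsc{Spider}, with the complete proof in Lemma~1 of the CIVR paper): the martingale-difference decomposition $e_i = e_{i-1} + Y_i$, the conditional unbiasedness $\E[Y_i\,|\,\cF_{i-1}]=0$ giving conditional orthogonality, the per-sample variance bound via the mean-square Lipschitz assumption, and telescoping. The measurability bookkeeping you flag (that $\xki$, $\tgkim$ are $\cF_{i-1}$-measurable while $\Bki$ is freshly sampled) is handled correctly, so there is nothing to add.
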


The following theorem establishes the convergence of Algorithm~\ref{algo:SVR-PL} by specifying the batch sizes used in the SARAH/\textsc{Spider} estimators, and gives the sample complexities for $g_\xi$ and $g'_\xi$.
\begin{theorem}
    Suppose Assumptions~\ref{assumption:f-cvx}, \ref{assumption:phi-lowerbound} and~\ref{assumption:g-lip-infinite} hold for problem~\eqref{eqn:composite-expect}.
    Let the estimates $\tgkz$, $\tJkz$, $\tgki$ and $\tJki$ in Algorithm~\ref{algo:SVR-PL} be given in~\eqref{eqn:g-J-spider-0}-\eqref{eqn:J-spider-infinite}.
    If we choose $M\geq 4\ell_f L_g$ and $\tau=\lceil\epsilon^{-1/2}\rceil$, and
    the batch sizes as 
\[
    |\Bkz| = \biggl\lceil\frac{25\ell_f^2\sigma_g^2}{4\epsilon^2}\biggr\rceil, \quad
    |\Skz| = \biggl\lceil\frac{3\ell_f\sigma_{g'}^2}{4L_g\epsilon}\biggr\rceil, \quad
    |\Bki| = \biggl\lceil\frac{25\ell_f^2\ell_g^2}{M\epsilon^{3/2}}\biggr\rceil,  \quad
    |\Ski| = \left\lceil\frac{12\ell_f L_g}{M\epsilon^{1/2}}\right\rceil, 
\] 
for $i=1,\ldots,\tau-1$, 
then the output $x_{i^*}^{k^*}$ satisfies 
	\begin{equation}
	\label{eqn:cvg-sarah-nonsmooth-infinite}
	\E\Bigl[\bigl\|\cG(x_{i*}^{k^*})\bigr\|^2\Bigr]~\leq~ 24M\left( \frac{\Phi(x_0^1)-\Phi_*}{K\tau}+3\epsilon\right).
	\end{equation}
Consequently by setting $K = \cO(\epsilon^{-\half})$, then we get an output $\E[\|\cG(x^{k^*}_{i^*})\|^2]\leq \cO(\epsilon)$ with a function evaluation complexity of $\cO(\epsilon^{-5/2})$ and a Jacobian evaluation complexity of $\cO(\epsilon^{-3/2})$.
\end{theorem}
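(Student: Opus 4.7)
complexity theorem.}

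The plan is to imitate the inner‐loop analysis of Section~\ref{sec:nonsmooth-finite}, but replacing the SVRG+correction MSE bounds (Lemma~\ref{lemma:svr-f-new}) by the SARAH/\textsc{Spider} recursive bounds of Lemma~\ref{lemma:sarah-mse-infinite}. The new subtlety is that the batch sizes for $\Bkz,\Skz$ are only \emph{large enough to kill the bias at level $O(\epsilon)$}, not to make the inner estimator exact; so the base MSEs no longer vanish but become $O(\epsilon^2/\ell_f^2)$ and $O(L_g\epsilon/\ell_f)$ respectively, by direct substitution of the chosen $|\Bkz|,|\Skz|$ and the $1/B$ variance reduction for i.i.d.\ mini-batches.

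First I would combine Lemma~\ref{lemma: descent-Finite-nonsmooth} with Lemma~\ref{lemma:sarah-mse-infinite}, summed over the inner index $i=0,\ldots,\tau-1$ within one epoch~$k$. Writing $E_i=\E[\|\tgki-g(\xki)\|^2]$ and $F_i=\E[\|\tJki-g'(\xki)\|^2]$, the chosen $|\Bki|,|\Ski|$ give
\[
E_i\leq \frac{4\epsilon^2}{25\ell_f^2}+\frac{\epsilon^{3/2}}{25\ell_f^2 M}\sum_{r=0}^{i-1}\E[\|\tcG_r^k\|^2],\qquad F_i\leq \frac{4L_g\epsilon}{3\ell_f}+\frac{L_g\epsilon^{1/2}}{12\ell_f M}\sum_{r=0}^{i-1}\E[\|\tcG_r^k\|^2],
\]
using the identity $\|x_r^k-x_{r-1}^k\|^2=\|\tcG_{r-1}^k\|^2/M^2$. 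The Jacobian term in the descent lemma is quadratic in the error, so plugging in and swapping the order of summation (a double sum bounded by $\tau\sum_r$) contributes a clean $O(\tau\epsilon)$ constant plus a $\tfrac{\epsilon^{1/2}\tau}{O(M)}\sum_r\E[\|\tcG_r^k\|^2]$ drift, and with $\tau=\lceil\epsilon^{-1/2}\rceil$ the drift coefficient is $O(1/M)$.

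The delicate step is the non-squared mapping term $2\ell_f\|\tgki-g(\xki)\|$. Here I would use Jensen to pass to $\sqrt{E_i}$, split with $\sqrt{a+b}\leq\sqrt a+\sqrt b$ to separate the base error from the drift, then apply Cauchy–Schwarz over $i$ to get $\sum_i\sqrt{\sum_{r<i}\E[\|\tcG_r^k\|^2]}\leq \tau\sqrt{\sum_r\E[\|\tcG_r^k\|^2]}$, and finally use AM–GM ($2xy\leq x^2/\alpha+\alpha y^2$) with $\alpha$ chosen so that the $y^2$ coefficient is an arbitrarily small fraction of $1/M$. This is the main obstacle: with $\tau=\epsilon^{-1/2}$ the constant term produced by AM–GM is $O(\tau^2\epsilon^{3/2})=O(\tau\epsilon)$, exactly the right order so that after absorbing the $\sum_r\E[\|\tcG_r^k\|^2]$ piece into the $\tfrac{1}{4M}\sum_i\E[\|\tcGki\|^2]$ of the descent lemma, one is left with
\[
\E[\Phi(\xkz)]-\E[\Phi(x_\tau^k)]\;\geq\;\frac{c}{M}\sum_{i=0}^{\tau-1}\E[\|\tcGki\|^2]-C\tau\epsilon
\]
for explicit constants $c>0$ and $C>0$ (this is exactly the analogue of the bound obtained in Lemma~\ref{lemma:Lyapunov-finite-nonsmooth}, but without needing a Lyapunov function because here there is no $|\Bki|^{-1/2}$ slowdown in the SARAH MSE).

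Next I would telescope across $k=1,\ldots,K$, using $\Phi_*\leq\E[\Phi(x_\tau^K)]$ and $x_0^{k+1}=x_\tau^k$, to obtain
$\sum_{k,i}\E[\|\tcGki\|^2]\leq O(M)(\Phi(x_0^1)-\Phi_*)+O(M)K\tau\epsilon$. Then I would apply Lemma~\ref{lemma:PG-finite-sum-nonsmooth} to convert $\tcGki$ back to $\cGki$; plugging in the same MSE bounds gives an extra $O(M\tau\epsilon)$ contribution per epoch of exactly the same form. Dividing by $K\tau$ and using the uniform random output rule yields~\eqref{eqn:cvg-sarah-nonsmooth-infinite} with the constant $24M$ (once the numerical constants $\tfrac{25}{4},\tfrac{3}{4},\tfrac{12}{1}$ in the batch-size choices are matched). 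Finally, taking $K=\lceil\epsilon^{-1/2}\rceil$ makes the first term $O(M\epsilon)$, so the whole right-hand side is $O(M\epsilon)$. For the complexities I would tally $K(|\Bkz|+\tau|\Bki|)=O(\epsilon^{-1/2})\bigl(\epsilon^{-2}+\epsilon^{-1/2}\epsilon^{-3/2}\bigr)=O(\epsilon^{-5/2})$ for $g_\xi$, and $K(|\Skz|+\tau|\Ski|)=O(\epsilon^{-1/2})\bigl(\epsilon^{-1}+\epsilon^{-1/2}\epsilon^{-1/2}\bigr)=O(\epsilon^{-3/2})$ for $g'_\xi$, matching the claim.
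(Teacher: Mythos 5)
Your proposal is correct and follows essentially the same route as the paper: the same three ingredients (the descent lemma, the gradient-mapping comparison Lemma~\ref{lemma:PG-finite-sum-nonsmooth}, and the SARAH/\textsc{Spider} MSE recursion of Lemma~\ref{lemma:sarah-mse-infinite}), the same Young/AM--GM splitting of the non-squared error $2\ell_f\|\tgki-g(\xki)\|$ into an absorbable quadratic drift plus an $O(\epsilon)$ constant per iteration, the same drift absorption with $\tau=\lceil\epsilon^{-1/2}\rceil$ and the stated batch sizes, and the same telescoping and sample tally. The only differences are organizational --- the paper applies the splitting per iteration with a fixed parameter $\delta=\frac{4\epsilon}{5\ell_f}$ and forms a weighted combination of the two lemmas \emph{before} summing (which is how it obtains the exact constants $24M$ and $3\epsilon$), whereas you sum the descent inequality first and convert $\widetilde{\mathcal{G}}$ to $\mathcal{G}$ in a second pass, which gives the same orders with slightly different numerical constants.
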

\begin{proof}
    We will choose batch sizes that do not depend on~$k$.
	For the ease of notation, we set $|\Bkz|=B$, $|\Skz|=S$, and $|\Bki|= b$ and $|\Ski|=s$ for $i=1,\ldots,\tau-1$. 
    First, by Assumption~\ref{assumption:g-lip-infinite} and~\eqref{eqn:g-J-spider-0}, we have
\[
    \E\bigl[\|\tgkz-g(\xkz)\|^2\bigr] = \frac{\sigma_g^2}{B}, \qquad
    \E\bigl[\|\tJkz-g'(\xkz)\|^2\bigr] = \frac{\sigma_{g'}^2}{S},
\]
which can be substituted into Lemma~\ref{lemma:sarah-mse-infinite}.
Then by Lemma~\ref{lemma:sarah-mse-infinite}, we know that 
	$$\E\bigl[\|\tgki-g(\xki)\|\bigr] ~\leq~ \sqrt{\E\bigl[\|\tgki-g(\xki)\|^2\bigr]} ~\leq~ \frac{\sigma_g}{\sqrt{B}} + \sqrt{\frac{\ell_g^2}{b}\sum_{r=1}^{i}\E\bigl[\|x_r^k-x_{r-1}^k\|^2\bigr]}.$$
Moreover, for any $\delta>0$, we have 
    $$\sqrt{\frac{\ell_g^2}{b}\sum_{r=1}^{i}\E\bigl[\|x_r^k-x_{r-1}^k\|^2\bigr]}~\leq~ \frac{\delta}{2} + \frac{\ell_g^2}{2b\delta}\sum_{r=1}^{i}\E\bigl[\|x_r^k-x_{r-1}^k\|^2\bigr].$$
Now we invoke Lemma~\ref{lemma: descent-Finite-nonsmooth}. Taking expectation on both sides of~\eqref{eqn:descent-lemma} and applying~\eqref{eqn:J-spider-mse} and the above bounds, we obtain
\begin{eqnarray} 
	\E\bigl[\Phi(\xkie)\bigr]  
    & \leq & \E[\Phi(\xki)] - \frac{M-2\ell_f L_g}{2}\E\bigl[\|\xkie-\xki\|^2\bigr] +  \frac{\ell_f\sigma_{g'}^2}{2L_gS} + \frac{2\ell_f\sigma_g}{\sqrt{B}}\nonumber \\
    && +\, \biggl(\frac{\ell_f L_g}{2s} + \frac{\ell_f\ell_g^2}{b\delta}\biggr)\sum_{r=1}^i\E\bigl[\|x_r^k-x_{r-1}^{k}\|^2\bigr] + \ell_f\delta ,
	\label{eqn:sarah-nonsmooth-1}
\end{eqnarray}
Similarly, with Lemma~\ref{lemma:PG-finite-sum-nonsmooth}, we have
\begin{eqnarray}
    \frac{M - \ell_f L_g}{M^2}\E\bigl[\|\cGki\|^2\bigr]
&\leq& (2M+\ell_f L_g) \E\bigl[\|\xkie-\xki\|^2\bigr] + \frac{2\ell_f\sigma_{g'}^2}{L_gS} + \frac{4\ell_f\sigma_g}{\sqrt{B}} \nonumber\\
    && + \biggl(\frac{2\ell_f L_g}{s} + \frac{2\ell_f\ell_g^2}{b\delta}\biggr)\sum_{r=1}^i\E\bigl[\|x_r^k-x_{r-1}^{k}\|^2\bigr] + 2\ell_f\delta.
	\label{eqn:sarah-nonsmooth-2}
\end{eqnarray}
Because $M \geq 4\ell_f L_g$, we have $\frac{1}{2}\cdot\frac{M-2\ell_f L_g}{2(2M + \ell_f L_g)}\in\bigl[\frac{1}{18}, \frac{1}{8}\bigr]$. Therefore, multiplying\eqref{eqn:sarah-nonsmooth-2} by $\frac{1}{2}\cdot\frac{M-2\ell_f L_g}{2(2M + \ell_f L_g)}$ and adding to~\eqref{eqn:sarah-nonsmooth-1} gives
\begin{eqnarray*}
	\frac{1}{24M}\E\bigl[\|\cGki\|^2\bigr] 
    & \leq & \E\bigl[\Phi(\xki)\bigr] - \E\bigl[\Phi(\xkie)\bigr] - \frac{M-2\ell_f L_g}{4}\E\bigl[\|\xkie-\xki\|^2\bigr] \\
	& & +\, \biggl(\frac{3 \ell_f L_g}{4s} + \frac{5\ell_f\ell_g^2}{4b\delta}\biggr)\sum_{r=1}^i\E\bigl[\|x_r^k-x_{r-1}^{k}\|^2\bigr] 
     + \frac{3\ell_f\sigma_{g'}^2}{4L_gS} + \frac{5\ell_f\sigma_g}{2\sqrt{B}} + \frac{5}{4}\ell_f\delta.
\end{eqnarray*}
Next, we replace $\sum_{r=1}^i\E\bigl[\|x_r^k-x_{r-1}^{k}\|^2\bigr]$ in the above inequality by $\sum_{r=1}^\tau\E\bigl[\|x_r^k-x_{r-1}^{k}\|^2\bigr]$. 
Then summing up the above inequality for $i = 0,\ldots,\tau-1$ gives
\begin{eqnarray*}
	\frac{1}{24M}\sum_{i=0}^{\tau-1}\E\bigl[\|\cGki\|^2\bigr] 
    \!&\leq&\! \E[\Phi(x^k_0)] - \E[\Phi(x^k_\tau)] \\
      && - \biggl(\frac{M}{8} - \frac{3\tau \ell_f L_g}{4s} - \frac{5\tau\ell_f\ell_g^2}{4b\delta}\biggr)\sum_{r=1}^{\tau}\E\bigl[\|x_r^k-x_{r-1}^k\|^2\bigr]\\
	& &  +\, \biggl(\frac{3\ell_f\sigma_{g'}^2}{4L_gS} + \frac{5\ell_f\sigma_g}{2\sqrt{B}}+\frac{5}{4}\ell_f\delta\biggr)\tau.
\end{eqnarray*}
	If we set $\delta = \frac{4\epsilon}{5\ell_f}$, 
    $B= \frac{25\ell_f^2\sigma_g^2}{4\epsilon^2}$, 
    $S = \frac{3\ell_f\sigma_{g'}^2}{4L_g\epsilon}$, 
    $s = \frac{12\ell_f L_g}{M}\tau$ and
    $b = \frac{20\ell_f\ell_g^2}{M\delta}\tau = \frac{25\ell_f^2\ell_g^2}{M\epsilon}\tau$, then 
	$$\frac{M}{8} - \frac{3\tau \ell_f L_g}{4s} - \frac{5\tau\ell_g^2\ell_f}{4b\delta}~\geq~ 0 \quad\mbox{ and }\quad
    \frac{3\ell_f\sigma_{g'}^2}{4L_gS} + \frac{5\ell_f\sigma_g}{2\sqrt{B}}+\frac{4}{5}\ell_f\delta ~\leq~ 3\epsilon.$$
Therefore,
	\begin{eqnarray*}
		\frac{1}{24M}\sum_{i=0}^{\tau-1}\E\bigl[\|\cGki\|^2\bigr] 
        & \leq & \E[\Phi(x^k_0)] - \E[\Phi(x^k_\tau)] + 3\tau\epsilon.
	\end{eqnarray*}
    Summing up the above inequality for $k=1,\ldots,K$, and dividing by~$K\tau$, we obtain
	\begin{eqnarray*}
        \frac{1}{K\tau}\sum_{k=1}^K\sum_{i=0}^{\tau-1}\E\bigl[\|\cGki\|^2\bigr] 
        & \leq & 24M\left(\frac{\Phi(x^1_0) - \Phi_*}{K\tau} + 3\epsilon \right).
	\end{eqnarray*}
Since $x^{k^*}_{i^*}$ is randomly chosen from $\bigl\{\xki\bigr\}_{i=0,\ldots,\tau-1}^{k=1,\ldots,K}$, it satisfies \eqref{eqn:cvg-sarah-nonsmooth-infinite}. Moreover, in this case, we have $b=\cO(\tau/\epsilon)$ and $s=\cO(\tau)$. To find an $\epsilon$-stationary point in expectation, we further set $\tau = \epsilon^{-1/2}$ and $K = \cO(\epsilon^{-1/2})$, which implies $\E\bigl[\|\cG(x^{k^*}_{i^*})\|^2\bigr]\leq \cO(\epsilon)$.
Consequently, the sample complexity for the component mappings is
\[
    KB + K\tau b ~=~ \cO(\epsilon^{-1/2})\cdot\cO(\epsilon^{-2})
    + \cO(\epsilon^{-1/2})\cdot\epsilon^{-1/2}\cdot\cO(\epsilon^{-3/2})
    ~=~ \cO(\epsilon^{-5/2}),
\]
and the sample complexity for the Jacobians is
\[
    KS + K\tau s ~=~ \cO(\epsilon^{-1/2})\cdot\cO(\epsilon^{-1})
    + \cO(\epsilon^{-1/2})\cdot\epsilon^{-1/2}\cdot\cO(\epsilon^{-1/2})
    ~=~ \cO(\epsilon^{-3/2}).
\]
This finishes the proof.
\end{proof}

\begin{remark}\label{remark:connection-to-SGN2}
The sample complexities $\cO(\epsilon^{-5/2})$ and $\cO(\epsilon^{-3/2})$,
for component mappings and their Jacobians respectively, 
are also obtained using the SARAH estimator in \cite{tran2020stochastic}, 
but for a slightly different stationarity measure.
Specifically, their results are derived for finding a point $x$ that satisfies 
$\E[\|\widetilde{\mathcal{G}}(x)\|^2]\leq \epsilon$, 
where $\widetilde{\mathcal{G}}(x)$ is the \emph{approximate} gradient mapping
defined in~\eqref{eqn:xkie-def} and~\eqref{eqn:approx-gm}.
Since $\|\widetilde{\mathcal{G}}(x)\|=0$ alone may
not be a good measure for stationarity, 
\cite{tran2020stochastic} defined a primal-dual stationarity measure 
which requires additional conditions.
In contrast, our results directly guarantee
 $\E[\|\mathcal{G}(x)\|^2]\leq \epsilon$, 
where $\mathcal{G}(x)$ is the (exact) gradient mapping defined in~\eqref{eqn:grad-mapping}.
\end{remark}

\section{The smooth and finite-average case}
\label{sec:smooth-finite} 

In this section, we consider problem~\eqref{eqn:composite-finite} under the assumption that~$f$ is smooth and convex. 
Specifically, we assume that the component mappings $g_i$ satisfy Assumption~\ref{assumption:g-lip-finite}. For~$f$ and~$h$, in addition to Assumption~\ref{assumption:f-cvx}, we make the following additional assumption.

\begin{assumption}
	\label{assumption:SVR-PL-f-smooth} The gradient of~$f$, denoted as $f'$, is differentiable and $L_f$-Lipschitz continuous. 
\end{assumption} 
Under Assumptions~\ref{assumption:g-lip-finite} and~\ref{assumption:SVR-PL-f-smooth}, the composite function $f\circ g$ is smooth and its gradient has a Lipschitz constant
\begin{equation}\label{eqn:L-f-g-def}
    L_{f\circ g} \triangleq \ell_f L_g + L_f\ell_g^2.
\end{equation}
See \cite{C-SARAH} for a proof of this claim.

Algorithms for solving problem~\eqref{eqn:composite-finite} under the above assumptions have been studied in \cite{VRSC-PG,SVR-SCGD,C-SAGA,C-SARAH}.
The best sample complexity is $\cO(N+N^{1/2}\epsilon^{-1})$ obtained in \cite{C-SARAH}, using the SARAH/\textsc{Spider} estimator for $g'(x)f'(g(x))$, which is the gradient of $f(g(x))$ by the chain rule. 
In this section, we study an algorithm using the proximal mapping of~$f$ instead of the composite gradient.
It is no surprising that we can attain the sample complexity here.
Despite the same sample complexity in theory,
it is often observed in practice that algorithms based on proximal mappings can be more efficient than those based on gradients (e.g., \cite{AsiDuchi2019Truncated,AsiDuchi2019StoProxPoint,DamekDima2019ModelBased}).
Therefore, it is very meaningful to establish the sample complexity of proximal-mapping based methods when~$f$ is smooth.

We again apply the SARAH/\textsc{Spider} estimator to construct $\tgki$ and $\tJki$. For $i>0$, we use~\eqref{eqn:g-spider-infinite} and~\eqref{eqn:J-spider-infinite}, where~$\xi$ is interpreted as a random index drawn from $\{1,\ldots,N\}$ with replacement.
For $i=0$, we exploit the finite-average structure of $g$ by using the construction in~\eqref{eqn:finite-gk0} and~\eqref{eqn:finite-Jk0}, i.e.,
\begin{equation}
\label{def:sarah-f&g-0}
\tgkz = g(\xkz) \qquad\mbox{ and } \qquad\tJkz = g'(\xkz).
\end{equation}
This implies that 
$\E[\|\tgkz - g(x_0^k)\|^2]=0$ and $= \E[\|\tJkz - g'(x_0^k)\|^2]=0$, 
which can be substituted into Lemma \ref{lemma:sarah-mse-infinite}
to get the following result.
\begin{corollary}
	\label{lemma:sarah-mse-finite}
    Suppose Assumption~\ref{assumption:g-lip-infinite} holds.
    Let $\tgki$ and $\tJki$ be constructed according to~\eqref{def:sarah-f&g-0} for $i=0$ and~\eqref{eqn:g-spider-infinite} and~\eqref{eqn:J-spider-infinite} for $i=1,\ldots,\tau-1$. Then we have for $i=0,1,\ldots,\tau-1$,
\begin{eqnarray*}
    \E\bigl[\|\tgki-g(\xki)\|^2\bigr] &\leq& \sum_{r=1}^{i}\frac{\ell_g^2}{|\cB_r^k|}\E\bigl[\|x_r^k-x_{r-1}^k\|^2\bigr],\\
    \E\bigl[\|\tJki-g'(\xki)\|^2\bigr] &\leq& \sum_{r=1}^{i}\frac{L_g^2}{|\cS_r^k|}\E\bigl[\|x_r^k-x_{r-1}^k\|^2\bigr].
\end{eqnarray*}
\end{corollary}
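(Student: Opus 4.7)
The corollary is an almost immediate consequence of Lemma~\ref{lemma:sarah-mse-infinite}; the plan is simply to verify that the hypothesis of that lemma holds in the current finite-average setting and then to substitute the zero-variance initial conditions.

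First I would observe that when~$\xi$ is interpreted as a uniform random index over $\{1,\ldots,N\}$, Assumption~\ref{assumption:g-lip-finite} implies the mean-squared Lipschitz conditions required by Assumption~\ref{assumption:g-lip-infinite}. Indeed, for $x,y\in\dom h$,
\[
    \E\bigl[\|g_\xi(x)-g_\xi(y)\|^2\bigr] \;=\; \frac{1}{N}\sum_{i=1}^N \|g_i(x)-g_i(y)\|^2 \;\leq\; \biggl(\frac{1}{N}\sum_{i=1}^N \ell_{g,i}^2\biggr)\|x-y\|^2 \;=\; \ell_g^2\|x-y\|^2,
\]
where $\ell_g$ is the constant defined in~\eqref{eqn:ms-Lip-constants}, and the analogous bound with $L_g$ holds for the Jacobians. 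This means Lemma~\ref{lemma:sarah-mse-infinite} applies verbatim to the SARAH/\textsc{Spider} recursions~\eqref{eqn:g-spider-infinite} and~\eqref{eqn:J-spider-infinite}.

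Next I would note that the initializations in~\eqref{def:sarah-f&g-0} are exact rather than sampled: $\tgkz = g(\xkz)$ and $\tJkz = g'(\xkz)$ as deterministic identities. Consequently
\[
    \E\bigl[\|\tgkz - g(\xkz)\|^2\bigr] \;=\; 0, \qquad \E\bigl[\|\tJkz - g'(\xkz)\|^2\bigr] \;=\; 0.
\]
Plugging these into the two bounds of Lemma~\ref{lemma:sarah-mse-infinite} removes the leading terms and yields exactly the inequalities claimed in the corollary.

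There is no real obstacle here — the only thing to be careful about is the identification of constants, namely confirming that the $\ell_g$ and $L_g$ used by Lemma~\ref{lemma:sarah-mse-infinite} (defined via mean-squared Lipschitz continuity of the random mappings) coincide with the ones from~\eqref{eqn:ms-Lip-constants} under uniform sampling. Once that identification is in place, the conclusion follows by a direct substitution and no further estimation is required.
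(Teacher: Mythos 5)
Your proof is correct and is essentially the paper's own argument: the full-batch initialization~\eqref{def:sarah-f&g-0} makes $\E\bigl[\|\tgkz-g(\xkz)\|^2\bigr]=\E\bigl[\|\tJkz-g'(\xkz)\|^2\bigr]=0$, and substituting these into Lemma~\ref{lemma:sarah-mse-infinite} gives the stated bounds. Your preliminary check that uniform sampling over $\{1,\ldots,N\}$ under Assumption~\ref{assumption:g-lip-finite} yields the mean-squared Lipschitz constants of~\eqref{eqn:ms-Lip-constants} is a correct (and useful) contextual remark, though strictly unnecessary since the corollary's hypothesis already assumes Assumption~\ref{assumption:g-lip-infinite}.
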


Next, we prove a descent property of the algorithm. 
The additional assumption that~$f$ is smooth allows us to derive a tighter descent bound than Lemma~\ref{lemma: descent-Finite-nonsmooth}. 
In particular, we can replace the term $2\ell_f \|\tgki-g(\xki)\|$ in~\eqref{eqn:descent-lemma} with $L_f\|\tgki-g(\xki)\|^2$, which leads to reduction of the sample complexity for the component mappings.
\begin{lemma}
	\label{lemma: descent-finite-sum}
    Suppose Assumptions~\ref{assumption:f-cvx}, \ref{assumption:g-lip} and~\ref{assumption:SVR-PL-f-smooth} hold. Then Algorithm~\ref{algo:SVR-PL} has the following descent property:
\begin{eqnarray}
\Phi(\xkie) 
&\leq& \Phi(\xki) - \frac{M-2L_{f\circ g}}{2M^2}\bigl\|\tcGki\bigr\|^2 
\nonumber \\
&&  +\, L_f\bigl\|\tgki - g(\xki)\bigr\|^2
	+ \frac{\ell_f}{2L_g}\bigl\|\tJki - g'(\xki)\bigr\|^2 , 
    \label{eqn:Phi-descent-smooth}
\end{eqnarray}
where $L_{f\circ g}$ is defined in~\eqref{eqn:L-f-g-def}.
\end{lemma}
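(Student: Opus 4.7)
The plan is to follow the skeleton of the proof of Lemma~\ref{lemma: descent-Finite-nonsmooth} and upgrade exactly two of its inequalities, each powered by the $L_f$-Lipschitz continuity of $f'$. As in that proof, I begin by majorizing $f(g(\xkie))$, then replace the linearization by its sampled surrogate and invoke the optimality of $\xkie$ in~\eqref{eqn:xkie-def} to cancel the surrogate model value at $\xkie$, leaving the same three residual terms $T_1$, $T_2$, $T_3$ appearing in~\eqref{eqn:descent-finite-0}--\eqref{eqn:Phi-descent}.

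The first upgrade concerns the outer majorization. In place of Lemma~\ref{lemma:upper-bound} (which uses only Lipschitz continuity of $f$ and gives constant $\ell_f L_g$), I use the descent lemma for $f\circ g$, which is $L_{f\circ g}$-smooth as stated around~\eqref{eqn:L-f-g-def}. Combined with the convexity of $f$, which yields $f(g(\xki))+\langle f'(g(\xki)),g'(\xki)(\xkie-\xki)\rangle\leq f(g(\xki)+g'(\xki)(\xkie-\xki))$, this gives the sharper majorization
\[
f(g(\xkie))\leq f\bigl(g(\xki)+g'(\xki)(\xkie-\xki)\bigr)+\frac{L_{f\circ g}}{2}\|\xkie-\xki\|^2.
\]
Plugging this in at the first step of the proof of Lemma~\ref{lemma: descent-Finite-nonsmooth} produces the intermediate estimate $\Phi(\xkie)\leq\Phi(\xki)-\frac{M-L_{f\circ g}}{2}\|\xkie-\xki\|^2+T_1+T_2+T_3$.

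The second and more decisive upgrade is a quadratic (rather than linear) bound on $T_1+T_3$. Bounding them individually via the Lipschitz continuity of $f$ only recovers the linear $2\ell_f\|\tgki-g(\xki)\|$ estimate from the nonsmooth case; instead, I combine them by writing, with $v:=g'(\xki)(\xkie-\xki)$,
\[
T_1+T_3=\bigl[f(g(\xki)+v)-f(g(\xki))\bigr]-\bigl[f(\tgki+v)-f(\tgki)\bigr]=\int_0^1\bigl\langle f'(g(\xki)+tv)-f'(\tgki+tv),\,v\bigr\rangle\,dt.
\]
The $L_f$-Lipschitz continuity of $f'$ together with $\|v\|\leq\ell_g\|\xkie-\xki\|$ then gives $|T_1+T_3|\leq L_f\ell_g\|\tgki-g(\xki)\|\,\|\xkie-\xki\|$, and a Young's inequality tuned so that the $\|\tgki-g(\xki)\|^2$ coefficient is exactly $L_f$ yields $|T_1+T_3|\leq L_f\|\tgki-g(\xki)\|^2+\frac{L_f\ell_g^2}{4}\|\xkie-\xki\|^2$. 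The term $T_2$ is unchanged from~\eqref{eqn:descent-lemma-T2} and contributes $\frac{\ell_f L_g}{2}\|\xkie-\xki\|^2+\frac{\ell_f}{2L_g}\|\tJki-g'(\xki)\|^2$.

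Collecting everything, the coefficient of $\|\xkie-\xki\|^2$ becomes $-\bigl(\frac{M-L_{f\circ g}}{2}-\frac{L_f\ell_g^2}{4}-\frac{\ell_f L_g}{2}\bigr)$, which expanding $L_{f\circ g}=\ell_f L_g+L_f\ell_g^2$ equals $-\frac{M-2L_{f\circ g}}{2}-\frac{L_f\ell_g^2}{4}$ and is therefore bounded above by $-\frac{M-2L_{f\circ g}}{2}$ since the discarded piece only weakens the descent. Substituting $\|\tcGki\|^2=M^2\|\xkie-\xki\|^2$ yields~\eqref{eqn:Phi-descent-smooth}. The only non-routine observation is the cancellation in $T_1+T_3$: without it, one is stuck with a linear-in-$\|\tgki-g(\xki)\|$ bound and cannot reach the quadratic $L_f\|\tgki-g(\xki)\|^2$ form that is essential for the improved $\cO(N+\sqrt{N}\epsilon^{-1})$ sample complexity claimed in this section.
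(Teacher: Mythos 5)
Your proof is correct and takes essentially the same route as the paper's: you reuse the $T_1,T_2,T_3$ decomposition and the $T_2$ bound from Lemma~\ref{lemma: descent-Finite-nonsmooth}, and your decisive step---cancelling the first-order terms so that only $L_f\bigl\|\tgki-g(\xki)\bigr\|^2$ survives from $T_1+T_3$---is exactly the cancellation the paper obtains by expanding $T_1$ and $T_3$ separately to second order, with your integral identity merely repackaging it (and incidentally avoiding explicit use of $f''$). The minor deviations (invoking the $L_{f\circ g}$-descent lemma for $f\circ g$ in place of Lemma~\ref{lemma:upper-bound}, and getting $\tfrac{L_f\ell_g^2}{4}$ rather than $\tfrac{L_f\ell_g^2}{2}$ from Young's inequality) only shift intermediate constants that are absorbed into the final $\tfrac{M-2L_{f\circ g}}{2}$ coefficient in any case.
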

\begin{proof}
We revisit the proof of Lemma~\ref{lemma: descent-Finite-nonsmooth}.
In particular, the inequality~\eqref{eqn:Phi-descent} still holds, i.e.,
\begin{equation}\label{eqn:Phi-descent-repeat}
    \Phi(\xkie) ~=~ \Phi(\xki)  - \frac{M-\ell_f L_g}{2} \|\xkie-\xki\|^2 + T_1+ T_2 +T_3. 
\end{equation}
Moreover, we can reuse the bound for $T_2$ in~\eqref{eqn:descent-lemma-T2}.
and only need to rebound the terms $T_1$ and $T_3$.

Under Assumption~\ref{assumption:SVR-PL-f-smooth}, we denote the Hessian of~$f$ as $f''$ and it holds that $\|f''(z)\|\leq L_f$ for all $z\in\R^m$.
For the ease of notation, we denote 
$$\Dki \triangleq \tgki - g(\xki), \qquad 
\zki\triangleq g(\xki) + g'(\xki)(\xkie-\xki).$$
Starting with $T_1$, which is defined in~\eqref{eqn:descent-finite-0},
we use the second-order Taylor expansion of~$f$ to obtain
\begin{eqnarray*}
	T_1 
    & = & f\bigl(\zki\bigr)-f\bigl(\zki+\Dki\bigr) \\
    & = & f\bigl(\zki\bigr) - \Bigl( f\bigl(\zki\bigr) + \bigl\langle f'\bigl(\zki\bigr) ,\Dki\bigr \rangle + \half(\Dki)^T f''\bigl(\zki + \theta\Dki\bigr)\Dki \Bigr) \\
    & = & - \bigl\langle f'\bigl(\zki\bigr) ,\Dki\bigr\rangle - \half(\Dki)^T f''\bigl(\zki + \theta\Dki\bigr)\Dki,
\end{eqnarray*}
where $\theta\in[0,1]$. 
Since $f$ is convex and the spectral norm of $f''$ is bounded by~$L_f$, we have
\begin{eqnarray*}
    T_1
    & \leq & - \bigl\langle f'\bigl(\zki\bigr) ,\Dki\bigr\rangle \\ 
	& \leq & - \bigl\langle f'\bigl(g(\xki)\bigr) ,\Dki \bigr\rangle + \bigl|\bigl\langle f'\bigl(g(\xki)\bigr) - f'\bigl(\zki)\bigr) ,\Dki \bigr\rangle\bigr|\\
    & \leq & - \bigl\langle f'\bigl(g(\xki)\bigr) ,\Dki \bigr\rangle  + L_f\bigl\|g(\xki)-\zki\bigr\|\|\Dki\| \\
	&=& - \bigl\langle f'\bigl(g(\xki)\bigr) ,\Dki \bigr\rangle  + L_f\bigl\|g'(\xki)(\xkie-\xki)\bigr\|\|\Dki\|.
\end{eqnarray*}
Notice that by Assumption \ref{assumption:g-lip-finite} we have $\|g'(\xki)\|\leq \ell_g$, which gives
\begin{eqnarray}
T_1 & \leq & - \bigl\langle f'\bigl(g(\xki)\bigr) ,\Dki \bigr\rangle + L_f\ell_g \|\xkie-\xki\| \|\Dki\| \nonumber \\
    & \leq & - \bigl\langle f'\bigl(g(\xki)\bigr) ,\Dki \bigr\rangle + L_f\biggl(\frac{\ell_g^2}{2}\|\xkie-\xki\|^2+\half\|\Dki\|^2 \biggr) \nonumber \\
    & = & \frac{L_f}{2}\bigl\|\Dki\bigr\|^2 + \frac{L_f\ell_g^2}{2}\|\xkie - \xki\|^2- \bigl\langle f'\bigl(g(\xki)\bigr) ,\Dki \bigr\rangle .
\label{eqn:smooth-T-1}
\end{eqnarray}
For the term $T_3$ in~\eqref{eqn:Phi-descent}, we have for some $\theta\in[0,1]$,
\begin{eqnarray*}
	T_3 & = & f\bigl(g(\xki) + \Dki\bigr)-f\bigl(g(\xki)\bigr)\\
	& = & f\bigl(g(\xki)\bigr) +\bigl\langle f'\bigl(g(\xki)\bigr),\Dki\bigr\rangle + \half(\Dki)^T f''\bigl(g(\xki) + \theta\Dki\bigr)\Dki - f\bigl(g(\xki)\bigr) \\ 
	& \leq & \frac{L_f}{2}\bigl\|\Dki\bigr\|^2+ \bigl\langle f'\bigl(g(\xki)\bigr),\Dki \bigr\rangle.
\end{eqnarray*}
Substituting the new bounds on~$T_1$ and~$T_3$ and the existing bound on $T_2$ in~\eqref{eqn:descent-lemma-T2} into~\eqref{eqn:Phi-descent-repeat}, we obtain 
\begin{eqnarray*}
\Phi(\xkie) 
&\leq& \Phi(\xki) - \left(\frac{M}{2}-\ell_f L_g - \frac{1}{2}L_f\ell_g^2\right)\bigl\|\xkie-\xki\bigr\|^2 \\
&& +\, L_f\bigl\|\tgki - g(\xki)\bigr\|^2 + \frac{\ell_f}{2L_g}\bigl\|\tJki - g'(\xki)\bigr\|^2 . 
\end{eqnarray*}
The desired result holds by noting the definitions of $L_{f\circ g}$ and $\tcGki$.
\end{proof}


Parallel to Lemma \ref{lemma:PG-finite-sum-nonsmooth}, we have the following result.

\begin{lemma}
	\label{lemma:PG-finite-sum}
    Suppose Assumptions~\ref{assumption:f-cvx}, \ref{assumption:g-lip} and~\ref{assumption:SVR-PL-f-smooth} hold. 
    Let $\xkie$ and $\hxkie$ are defined in~\eqref{eqn:xkie-def} and~\eqref{eqn:x-hat-def} respectively.
    Then we have
\begin{eqnarray}
\frac{M - L_{f\circ g}}{M^2}\bigl\|\cGki\bigr\|^2
&\leq& \frac{2M+L_{f\circ g}}{M^2}\bigl\|\tcGki\bigr\|^2 
    \label{eqn:G-tG-smooth} \\
&& +\, 3L_f\bigl\|\tgki - g(\xki)\bigr\|^2
 + \frac{2\ell_f}{L_g}\bigl\|\tJki - g'(\xki)\bigr\|^2. \nonumber 
\end{eqnarray}
\end{lemma}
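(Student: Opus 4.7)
The plan is to mirror the proof of Lemma~\ref{lemma:PG-finite-sum-nonsmooth}, but to replace the Lipschitz bound on $f$ with the smoothness bound whenever the deviation comes from $\Dki \triangleq \tgki-g(\xki)$. This substitution is precisely what upgrades $4\ell_f\|\Dki\|$ into $3L_f\|\Dki\|^2$ and augments $\ell_f L_g$ into $L_{f\circ g}=\ell_f L_g+L_f\ell_g^2$. The $\tJki$-error contribution, by contrast, can be treated exactly as in the nonsmooth case since smoothness of $f$ gives nothing extra in that direction.

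Following the nonsmooth template, I would first invoke the $M$-strong convexity of the two subproblem objectives (which still holds here) to obtain
\[
    M\bigl\|\hxkie-\xkie\bigr\|^2 \leq \bigl(F(\xkie;\xki)-\widetilde F(\xkie;\xki)\bigr) + \bigl(\widetilde F(\hxkie;\xki)-F(\hxkie;\xki)\bigr).
\]
Then, for generic $x\in\dom h$, I decompose, with $u(x) = g(\xki)+g'(\xki)(x-\xki)$ and $v(x) = \tgki+\tJki(x-\xki)$,
\[
F(x;\xki)-\widetilde F(x;\xki) = \underbrace{\bigl[f(u(x))-f(u(x)+\Dki)\bigr]}_{\tilde T_1(x)} + \underbrace{\bigl[f(u(x)+\Dki)-f(v(x))\bigr]}_{\tilde T_2(x)}.
\]
The term $\tilde T_2$ depends only on the Jacobian error and is handled exactly as in Lemma~\ref{lemma:PG-finite-sum-nonsmooth} using Lipschitzness of $f$, yielding $|\tilde T_2(x)| \leq \frac{\ell_f}{2L_g}\|\tJki-g'(\xki)\|^2 + \frac{\ell_f L_g}{2}\|x-\xki\|^2$.

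The critical new step is bounding $\tilde T_1$ and $-\tilde T_1$ using smoothness of $f$, in the spirit of the term $T_1$ in the proof of Lemma~\ref{lemma: descent-finite-sum}. A second-order Taylor expansion together with $f''\succeq 0$ (from convexity) gives $\tilde T_1(x) \leq -\langle f'(u(x)),\Dki\rangle$, while the standard descent lemma gives $-\tilde T_1(x) \leq \langle f'(u(x)),\Dki\rangle + \frac{L_f}{2}\|\Dki\|^2$. I would then rewrite $f'(u(x)) = f'(g(\xki)) + (f'(u(x))-f'(g(\xki)))$ and use $L_f$-Lipschitzness of $f'$ together with $\|g'(\xki)\|\leq\ell_g$ and Young's inequality to produce additional $\frac{L_f\ell_g^2}{2}\|x-\xki\|^2 + \frac{L_f}{2}\|\Dki\|^2$ correction terms. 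Evaluating these bounds at $x=\xkie$ for $\tilde T_1$ and at $x=\hxkie$ for $-\tilde T_1$, the linear terms $\pm\langle f'(g(\xki)),\Dki\rangle$ cancel exactly, leaving
\[
M\bigl\|\hxkie-\xkie\bigr\|^2 \leq \tfrac{3L_f}{2}\|\Dki\|^2 + \tfrac{\ell_f}{L_g}\|\tJki-g'(\xki)\|^2 + \tfrac{L_{f\circ g}}{2}\bigl(\|\xkie-\xki\|^2+\|\hxkie-\xki\|^2\bigr).
\]

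From here the rest is identical to the closing steps of Lemma~\ref{lemma:PG-finite-sum-nonsmooth}: I apply $\|\hxkie-\xki\|^2\leq 2\|\hxkie-\xkie\|^2+2\|\xkie-\xki\|^2$ to eliminate the $\|\hxkie-\xkie\|^2$ on the left, rearrange to isolate $(M-L_{f\circ g})\|\hxkie-\xki\|^2$ on one side, and substitute $\|\hxkie-\xki\|^2 = \|\cGki\|^2/M^2$ and $\|\xkie-\xki\|^2 = \|\tcGki\|^2/M^2$. The main obstacle is the careful bookkeeping of constants in the $\tilde T_1$/$-\tilde T_1$ bounds: the asymmetry between the coefficient $\frac{L_f}{2}\|\Dki\|^2$ (which arises from discarding the nonnegative $f''$-remainder) and $L_f\|\Dki\|^2$ (from the full descent lemma) combines into $\frac{3L_f}{2}\|\Dki\|^2$, which then doubles to the $3L_f$ coefficient of the lemma after the $\|a+b\|^2 \leq 2\|a\|^2+2\|b\|^2$ step.
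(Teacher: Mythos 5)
Your proposal is correct and follows essentially the same route as the paper's proof: the same decomposition through the intermediate point $\tgki+g'(\xki)(x-\xki)$, the same convexity/descent-lemma bounds on the $\Dki$-terms with cancellation of the linear terms $\pm\langle f'(g(\xki)),\Dki\rangle$ after recentering $f'$ at $g(\xki)$, and the same closing steps via $\|a+b\|^2\leq 2\|a\|^2+2\|b\|^2$. The constants ($\tfrac{3L_f}{2}$ doubling to $3L_f$, $\tfrac{\ell_f}{L_g}$ to $\tfrac{2\ell_f}{L_g}$, and $\tfrac{L_{f\circ g}}{2}$ producing the $M-L_{f\circ g}$ and $2M+L_{f\circ g}$ coefficients) all match the paper's derivation.
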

\begin{proof}
    We revisit the proof of Lemma~\ref{lemma:PG-finite-sum-nonsmooth}, and start with the inequality~\eqref{lm:PG-finite-sum-3}, which is
	$$M\|\hxkie - \xkie\|^2\leq F(\xkie;\xki)-\widetilde{F}(\xkie;\xki) + \widetilde{F}(\hat{x}^k_{i+1};\xki)-F(\hat{x}^k_{i+1};\xki).$$
We can establish a tighter bound for the right-hand-side when~$f$ is smooth.
From the definitions of $F$ and $\widetilde{F}$ in~\eqref{eqn:F-def} and~\eqref{eqn:tF-def} and the definitions of $T_1$ nd $T_2$ in~\eqref{eqn:descent-finite-0}, we have
\begin{eqnarray}
F(\xkie;\xki)-\widetilde{F}(\xkie;\xki) 
    &=& f\bigl(g(\xki)+g'(\xki)(\xkie-\xki)\bigr) - f\bigl(\tgki+\tJki(\xkie-\xki)\bigr) \nonumber \\
    & = & T_1+T_2 \nonumber \\
	& \leq &  \frac{L_f}{2}\bigl\|\tgki - g(\xki)\bigr\|^2 + \frac{\ell_f}{2L_g}\bigl\|\tJki - g'(\xki)\bigr\|^2  
\label{lm:PG-finite-sum-4} \\
	& & + \frac{\ell_f L_g  + L_f\ell_g^2}{2}\bigl\|\xkie - \xki\bigr\|^2 - \bigl\langle f'\bigl(g(\xki)\bigr) ,\Dki \bigr\rangle , \nonumber
\end{eqnarray}
where the last inequality is due to~\eqref{eqn:descent-lemma-T2} and~\eqref{eqn:smooth-T-1}.
Following similar arguments, we can derive 
\begin{eqnarray}
	\widetilde{F}(\hat{x}^k_{i+1};\xki)-F(\hat{x}^k_{i+1};\xki)
	& \leq &  L_f\bigl\|\tgki - g(\xki)\bigr\|^2 + \frac{\ell_f}{2L_g}\bigl\|\tJki - g'(\xki)\bigr\|^2 
\label{lm:PG-finite-sum-5} \\
	&&+ \frac{\ell_f L_g + L_f\ell_g^2}{2}\bigl\|\hxkie- \xki\bigr\|^2 + \bigl\langle f'\bigl(g(\xki)\bigr) ,\Dki \bigr\rangle.
    \nonumber
\end{eqnarray}
Summing up~\eqref{lm:PG-finite-sum-4} and~\eqref{lm:PG-finite-sum-5} and noting the definition of~$L_{f\circ g}$, we have
\begin{eqnarray*}
    M\bigl\|\hxkie-\xkie\bigr\|^2 
    &\leq& \frac{3L_f}{2}\bigl\|\tgki - g(\xki)\bigr\|^2 + \frac{\ell_f}{L_g}\bigl\|\tJki - g'(\xki)\bigr\|^2 \\
    && +\, \frac{L_{f\circ g}}{2}\bigl\|\hxkie- \xki\bigr\|^2 + \frac{L_{f\circ g}}{2}\bigl\|\xkie- \xki\bigr\|^2. 
\end{eqnarray*}
Combining the above inequality with 
$$M\bigl\|\hxkie-\xki\bigr\|^2
\leq 2M\bigl\|\xkie-\xki\bigr\|^2 + 2M\bigl\|\hxkie-\xkie\bigr\|^2$$
yields (following similar arguments at the end of proof for Lemma \ref{lemma:PG-finite-sum-nonsmooth})
\begin{eqnarray*}
(M - L_{f\circ g})\bigl\|\hxkie-\xki\bigr\|^2
&\leq& (2M+L_{f\circ g})\bigl\|\xkie-\xki\bigr\|^2 \\
&& +\, 3L_f\bigl\|\tgki - g(\xki)\bigr\|^2
   + \frac{2\ell_f}{L_g}\bigl\|\tJki - g'(\xki)\bigr\|^2.
\end{eqnarray*}
Finally we obtain the desired result using the definitions of
$\cG(\xki)$ and $\widetilde{\cG}(\xki)$.
\end{proof}

The main result of this section is given by the following theorem.
\begin{theorem}
	\label{theorem:sarah-cvg-finite}
    Suppose Assumptions~\ref{assumption:f-cvx}, \ref{assumption:g-lip}, \ref{assumption:phi-lowerbound} and~\ref{assumption:SVR-PL-f-smooth} hold for problem~\eqref{eqn:composite-finite}. 
    In Algorithm~\ref{algo:SVR-PL}, let $\tgki$ and $\tJki$ be constructed according to~\eqref{def:sarah-f&g-0} for $i=0$ and~\eqref{eqn:g-spider-infinite} and~\eqref{eqn:J-spider-infinite} for $i=1,\ldots,\tau-1$. 
    If we choose $M\geq 4 L_{f\circ g}$ and $\tau = \lceil\sqrt{N}\rceil$, and set the batch sizes $|\Bki| = |\Ski| = 2\lceil\sqrt{N}\rceil$ for $i=1,\ldots,\tau-1$, then
\begin{equation}
\label{eqn:sarah-cvg-finite-1}
\E\left[\bigl\|\cG(x_{i^*}^{k^*})\bigr\|^2\right] ~\leq~ \frac{24M\bigl(\Phi(x^1_0) - \Phi_*\bigr)}{K\tau}.
\end{equation} 
The total sample complexity of reaching an $\epsilon$-stationary point in expectation is $\cO(N+\sqrt{N}\epsilon^{-1})$.
\end{theorem}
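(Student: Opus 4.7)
The plan is to follow a structure closely parallel to the proof given in Section~\ref{sec:nonsmooth-expect} for the SARAH/\textsc{Spider} estimator, but exploiting the sharper descent inequality available when $f$ is smooth (Lemma~\ref{lemma: descent-finite-sum} in place of Lemma~\ref{lemma: descent-Finite-nonsmooth}). In particular, note the key change: the error term $2\ell_f\|\tgki-g(\xki)\|$ that appears in the nonsmooth descent has been replaced by $L_f\|\tgki-g(\xki)\|^2$, a squared quantity whose conditional expectation can be directly controlled by the MSE bounds of Corollary~\ref{lemma:sarah-mse-finite}. This is what unlocks the improved sample complexity.

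First I would take expectation in Lemma~\ref{lemma: descent-finite-sum} and substitute Corollary~\ref{lemma:sarah-mse-finite} to obtain
\[
\E[\Phi(\xkie)]\leq \E[\Phi(\xki)]-\tfrac{M-2L_{f\circ g}}{2M^2}\E[\|\tcGki\|^2]+\tfrac{L_f\ell_g^2+\ell_f L_g/2}{M^2}\sum_{r=1}^{i}\tfrac{1}{|\Bkr|\wedge|\Skr|}\E[\|\tcG^{k}_{r-1}\|^2],
\]
after using the identity $\|x_r^k-x_{r-1}^k\|^2=\|\tcG^{k}_{r-1}\|^2/M^2$. In parallel, I would take Lemma~\ref{lemma:PG-finite-sum} and substitute the MSE bounds in Corollary~\ref{lemma:sarah-mse-finite} to express $\|\cGki\|^2$ in terms of $\|\tcGki\|^2$ plus the same kind of weighted sum of previous $\|\tcG^{k}_{r-1}\|^2$ terms. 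Multiplying the second inequality by a small constant (so that its $\|\tcGki\|^2$ coefficient is absorbed into the negative $-\tfrac{M-2L_{f\circ g}}{2M^2}\|\tcGki\|^2$ term of the descent) and adding it to the descent inequality yields, for each $i$, a bound of the form
\[
\tfrac{1}{24M}\E[\|\cGki\|^2]\;\leq\;\E[\Phi(\xki)]-\E[\Phi(\xkie)]-\alpha\E[\|\tcGki\|^2]+\beta\sum_{r=1}^{i}\E[\|\tcG^{k}_{r-1}\|^2],
\]
where $\alpha,\beta>0$ depend only on $M$, $L_{f\circ g}$ and the batch sizes.

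Next I would sum this per-step inequality over $i=0,\ldots,\tau-1$ within one epoch. The crucial combinatorial estimate is $\sum_{i=0}^{\tau-1}\sum_{r=1}^{i}\E[\|\tcG^{k}_{r-1}\|^2]\leq \tau\sum_{r=0}^{\tau-1}\E[\|\tcG^{k}_{r}\|^2]$, so the accumulated error term scales as $\tau/b$, where $b=|\Bki|=|\Ski|=2\lceil\sqrt{N}\rceil$. With $\tau=\lceil\sqrt{N}\rceil$ we have $\tau/b\leq 1/2$, and together with the condition $M\geq 4L_{f\circ g}$ this guarantees the accumulated error is absorbed by the per-iteration negative $-\alpha\|\tcGki\|^2$ contributions. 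The main obstacle is the careful bookkeeping of the constants to verify that after this absorption the coefficient of $\sum_i\E[\|\cGki\|^2]$ is at least $1/(24M)$; this is analogous to (and simpler than) the choice of Lyapunov coefficients $c_i$ used in Section~\ref{sec:nonsmooth-finite}, because the smoothness of $f$ eliminates the need to carry first-order corrections.

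Finally I would sum the resulting per-epoch inequality over $k=1,\ldots,K$, telescope using $x_0^{k+1}=x_\tau^k$ and the lower bound $\Phi_*$, and invoke the uniform random choice of $x_{i^*}^{k^*}$ to obtain~\eqref{eqn:sarah-cvg-finite-1}. For the sample complexity, note that each epoch costs $N$ evaluations for~\eqref{def:sarah-f&g-0} plus $\tau\cdot b=\cO(N)$ evaluations in the inner loop, so each epoch costs $\cO(N)$ samples for both $g_i$ and $g_i'$. Setting $K\tau=\cO(\epsilon^{-1})$ with $\tau=\lceil\sqrt{N}\rceil$ gives $K=\cO(\epsilon^{-1}/\sqrt{N})$, and the total sample complexity is $K\cdot\cO(N)=\cO(\sqrt{N}\epsilon^{-1})$; including at least one full-batch initialization yields the stated $\cO(N+\sqrt{N}\epsilon^{-1})$.
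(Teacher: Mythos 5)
Your proposal is correct and follows essentially the same route as the paper: combine the smooth descent bound (Lemma~\ref{lemma: descent-finite-sum}) with a small multiple of Lemma~\ref{lemma:PG-finite-sum}, control the squared estimation errors via Corollary~\ref{lemma:sarah-mse-finite}, absorb the accumulated $\tau/b$ error using $M\geq 4L_{f\circ g}$, $\tau=\lceil\sqrt{N}\rceil$, $b=2\lceil\sqrt{N}\rceil$, then telescope over epochs and count samples. The only difference—applying the MSE bounds to each lemma before combining rather than after—is immaterial.
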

\begin{proof}
    Under the assumption $M\geq 4 L_{f\circ g}$, we have $\frac{1}{2}\cdot\frac{M-2L_{f\circ g}}{2(2M+L_{f\circ g})}\in\bigl[\frac{1}{18}, \frac{1}{8}\bigr]$.
    Multiplying both sides of~\eqref{eqn:G-tG-smooth} by $\frac{1}{2}\cdot\frac{M-2L_{f\circ g}}{2(2M+L_{f\circ g})}$ and adding them to~\eqref{eqn:Phi-descent-smooth}, we obtain
\begin{eqnarray}
\frac{1}{24M}\bigl\|\cGki\bigr\|^2
&\leq& \Phi(\xki)-\Phi(\xkie) - \frac{M-2L_{f\circ g}}{4M^2}\bigl\|\tcGki\bigr\|^2 \nonumber \\
&& +\, \frac{11}{8}L_f\bigl\|\tgki - g(\xki)\bigr\|^2
	+ \frac{3\ell_f}{4L_g}\bigl\|\tJki - g'(\xki)\bigr\|^2.
\label{eqn:smooth-before-expect}
\end{eqnarray}
Taking expectation on both sides of the above inequality and applying 
Corollary~\ref{lemma:sarah-mse-finite}, we have 
\begin{eqnarray*}
\frac{1}{24M}\E\bigl[\bigl\|\cGki\bigr\|^2\bigr]
&\leq& \E\bigl[\Phi(\xki)\bigr]-\E\bigl[\Phi(\xkie)\bigr] - \frac{M-2L_{f\circ g}}{4M^2}\E\bigl[\bigl\|\tcGki\bigr\|^2\bigr] \\
&& +\, \frac{11}{8}L_f\ell_g^2 \sum_{r=1}^i\frac{1}{|\Bkr|}\E\bigl[\bigl\|x^k_r - x^k_{r-1}\bigr\|^2\bigr] \\
&& +\, \frac{3\ell_f L_g}{4}\sum_{r=1}^i \frac{1}{|\Skr|} \E\bigl[\bigl\|x^k_r - x^k_{r-1}\bigr\|^2\bigr].
\end{eqnarray*}
We will use constant batch sizes and let $|\Bki|=|\Ski|=S$ for all $k=1,\ldots,K$ and $i=1,\ldots,\tau-1$.
In addition, we can increase the summation from $\sum_{r=1}^i$ to $\sum_{r=1}^\tau$, which leads to
\begin{eqnarray*}
\frac{1}{24M}\E\bigl[\bigl\|\cGki\bigr\|^2\bigr]
&\leq& \E\bigl[\Phi(\xki)\bigr]-\E\bigl[\Phi(\xkie)\bigr] - \frac{M-2L_{f\circ g}}{4M^2}\E\bigl[\bigl\|\tcGki\bigr\|^2\bigr] \\
&& +\, \frac{11}{8}\frac{L_f\ell_g^2}{S} \sum_{r=1}^\tau \E\bigl[\bigl\|x^k_r - x^k_{r-1}\bigr\|^2\bigr] \\
&& +\, \frac{3\ell_f L_g}{4S}\sum_{r=1}^\tau \E\bigl[\bigl\|x^k_r - x^k_{r-1}\bigr\|^2\bigr].
\end{eqnarray*}
Plugging in $\tcGki=-M(\xkie-\xki)$ and noticing that
$$\frac{11}{8}\frac{L_f\ell_g^2}{S} + \frac{3\ell_f L_g}{4S} \leq \frac{3}{2}\frac{\ell_f L_g + L_f\ell_g^2}{S}=\frac{3}{2}\frac{L_{f\circ g}}{S},$$
we obtain
\begin{eqnarray*}
\frac{1}{24M}\E\bigl[\bigl\|\cGki\bigr\|^2\bigr]
&\leq& \E\bigl[\Phi(\xki)\bigr]-\E\bigl[\Phi(\xkie)\bigr] - \frac{M-2L_{f\circ g}}{4}\E\bigl[\bigl\|\xkie-\xki\bigr\|^2\bigr] \\
&& +\, \frac{3}{2}\frac{L_{f\circ g}}{S} \sum_{r=1}^\tau \E\bigl[\bigl\|x^k_r - x^k_{r-1}\bigr\|^2\bigr].
\end{eqnarray*}
Summing up the above inequality for $i=0,\ldots,\tau-1$ yields
\begin{eqnarray*}
    \frac{1}{24M}\sum_{i=0}^{\tau-1}\E\bigl[\bigl\|\cGki\bigr\|^2\bigr]
    &\leq& \E\bigl[\Phi(\xkz)\bigr]-\E\bigl[\Phi(x^{k+1}_0)\bigr] 
    - \left(\frac{M}{4} - \frac{2\tau}{S}L_{f\circ g}\right) \sum_{i=0}^{\tau-1} \E\bigl[\bigl\|\xkie-\xki\bigr\|^2\bigr] .
\end{eqnarray*}
The choices of $M\geq 4 L_{f\circ g}$, $\tau=\lceil\sqrt{N}\rceil$ and $S=2\tau$ ensure $\frac{M}{4}-\frac{2\tau}{S}L_{f\circ g}\geq 0$.
Therefore
\[
    \frac{1}{24M}\sum_{i=0}^{\tau-1}\E\bigl[\bigl\|\cGki\bigr\|^2\bigr]
    \leq \E\bigl[\Phi(\xkz)\bigr]-\E\bigl[\Phi(x^{k+1}_0)\bigr].
\]
Summing up the above inequality for $k=1,\ldots,K$ and noticing the choice of $x^{k^*}_{i^*}$ in Algorithm~\ref{algo:SVR-PL}, we obtain~\eqref{eqn:sarah-cvg-finite-1}.

To get an $\epsilon$-stationary point in expectation, we need to set $K\tau = \cO(\epsilon^{-1})$, which implies 
\[
    K=\cO(\tau^{-1}\epsilon^{-1})=\cO(N^{-1/2}\epsilon^{-1}).
\] 
Consequently, the sample complexity for both the component mappings 
and their Jacobians is
\[
    KN + K\tau S 
    ~=~ \cO(N^{-1/2}\epsilon^{-1})\cdot N + \cO(\epsilon^{-1}) \cdot 2 N^{1/2} 
    ~=~ \cO(N+N^{1/2}\epsilon^{-1}).
\]
This finishes the proof.
\end{proof}

\section{The smooth and expectation case}
\label{sec:smooth-expect}

In this section we focus on problem~\eqref{eqn:composite-expect} when~$f$ is smooth and convex. Specifically, we proceed with
Assumptions~\ref{assumption:f-cvx}, \ref{assumption:g-lip-infinite} and \ref{assumption:SVR-PL-f-smooth}. 
Under these assumptions, we still use the SARAH/\textsc{Spider} estimators 
in~\eqref{eqn:g-J-spider-0}, \eqref{eqn:g-spider-infinite} and~\eqref{eqn:J-spider-infinite}.
Since that the mean-square error bounds bounds on the estimators in Lemma~\ref{lemma:sarah-mse-infinite} only depends on Assumption~\ref{assumption:g-lip-infinite}, they remain valid in this section. 
We have the following result.

\begin{theorem}
\label{theorem:SARAH-cvg-Infinite} 
Suppose Assumptions~\ref{assumption:f-cvx}, \ref{assumption:phi-lowerbound}, \ref{assumption:g-lip-infinite} and~\ref{assumption:SVR-PL-f-smooth} hold for problem~\eqref{eqn:composite-expect}.
    Let the estimates $\tgkz$, $\tJkz$, $\tgki$ and $\tJki$ in Algorithm~\ref{algo:SVR-PL} be given in~\eqref{eqn:g-J-spider-0}-\eqref{eqn:J-spider-infinite}, and we choose $M\geq 4 L_{f\circ g}$.
    For any $\epsilon>0$, if we choose $\tau = \lceil\epsilon^{-1/2}\rceil$ and the batch sizes as
\[
    \bigl|\Bkz\bigr|=\left\lceil\frac{11L_f\sigma_g^2}{4\epsilon}\right\rceil, \qquad
    \bigl|\Skz\bigr|=\left\lceil\frac{3\ell_f^2\sigma_{g'}^2}{2L_g\epsilon}\right\rceil, \qquad
    \bigl|\Bki\bigr|=\bigl|\Ski\bigr|=2 \left\lceil \epsilon^{-1/2}\right\rceil,
\]
where $i=1,\ldots,\tau-1$, 
then we have 
\begin{equation}
\label{thm:sarah-cvg-infinite-0}
\frac{1}{24M}\E[\|\cG(x_{i^*}^{k^*})\|^2] 
~\leq~ \frac{\Phi(x^1_0) - \Phi(x^*)}{K\tau} + \epsilon.
\end{equation}
Consequently, we have 
$\E\bigl[\|\cG(x_{i^*}^{k^*})\|^2\bigr] = \cO(\epsilon)$ 
by setting $K = \cO(\epsilon^{-1/2})$, and the total sample complexity is
$\cO(\epsilon^{-3/2})$.
\end{theorem}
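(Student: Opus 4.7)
The plan is to mirror the argument of Theorem~\ref{theorem:sarah-cvg-finite}, but replacing the exact full-batch initialization (which relied on the finite-average structure) by the stochastic mini-batch initialization in~\eqref{eqn:g-J-spider-0}; the price we pay is that the bounds in Lemma~\ref{lemma:sarah-mse-infinite} carry two extra terms, $\E[\|\tgkz-g(\xkz)\|^2]\leq \sigma_g^2/|\Bkz|$ and $\E[\|\tJkz-g'(\xkz)\|^2]\leq \sigma_{g'}^2/|\Skz|$, that did not appear in Corollary~\ref{lemma:sarah-mse-finite}. So first I would combine Lemma~\ref{lemma: descent-finite-sum} with Lemma~\ref{lemma:PG-finite-sum} exactly as in the smooth finite-average proof, multiplying~\eqref{eqn:G-tG-smooth} by $\frac{1}{2}\cdot\frac{M-2L_{f\circ g}}{2(2M+L_{f\circ g})}\in[\tfrac{1}{18},\tfrac{1}{8}]$ (valid because $M\geq 4L_{f\circ g}$) and adding to~\eqref{eqn:Phi-descent-smooth}. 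This gives a per-iteration bound of the form~\eqref{eqn:smooth-before-expect} with $\|\cGki\|^2/(24M)$ on the left, $\Phi(\xki)-\Phi(\xkie)$ as the ``descent'' term, a strongly negative $\|\tcGki\|^2$ term, and the two estimation-error terms $\frac{11}{8}L_f\|\tgki-g(\xki)\|^2$ and $\frac{3\ell_f}{4L_g}\|\tJki-g'(\xki)\|^2$ on the right.

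Next I would take expectation and substitute the SARAH/SPIDER bounds from Lemma~\ref{lemma:sarah-mse-infinite}. Using constant batch sizes $|\Bki|=|\Ski|=S$ for $i\geq 1$ and replacing $\sum_{r=1}^i$ by $\sum_{r=1}^\tau$, the movement terms contribute $\frac{3}{2}\frac{L_{f\circ g}}{S}\sum_{r=1}^\tau\E\|x_r^k-x_{r-1}^k\|^2$ just as in the proof of Theorem~\ref{theorem:sarah-cvg-finite} (using $\frac{11}{8}L_f\ell_g^2+\frac{3}{4}\ell_f L_g\leq\frac{3}{2}L_{f\circ g}$), while the initial estimator errors contribute an additional additive term $\frac{11}{8}L_f\frac{\sigma_g^2}{|\Bkz|}+\frac{3\ell_f}{4L_g}\frac{\sigma_{g'}^2}{|\Skz|}$ at \emph{every} inner iteration.

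Then I would sum over $i=0,\ldots,\tau-1$ to telescope $\Phi(\xkz)-\Phi(x_0^{k+1})$ and absorb the movement terms into the coefficient $\bigl(\tfrac{M}{4}-\tfrac{2\tau}{S}L_{f\circ g}\bigr)\sum_i \E\|\xkie-\xki\|^2$, which is nonnegative by the choices $M\geq 4L_{f\circ g}$ and $S=2\tau$. Multiplying the initial-error contribution by~$\tau$ and plugging in $|\Bkz|=\lceil\frac{11L_f\sigma_g^2}{4\epsilon}\rceil$, $|\Skz|=\lceil\frac{3\ell_f^2\sigma_{g'}^2}{2L_g\epsilon}\rceil$ ensures $\tau\bigl(\frac{11L_f\sigma_g^2}{8|\Bkz|}+\frac{3\ell_f\sigma_{g'}^2}{4L_g|\Skz|}\bigr)\leq\tau\epsilon$. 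Summing over $k=1,\ldots,K$ and dividing by $K\tau$, together with the uniform choice of $x^{k^*}_{i^*}$, yields~\eqref{thm:sarah-cvg-infinite-0}. Finally, setting $\tau=\lceil\epsilon^{-1/2}\rceil$ and $K=\cO(\epsilon^{-1/2})$ gives $\E\|\cG(x^{k^*}_{i^*})\|^2=\cO(\epsilon)$, and the sample complexity is $K|\Bkz|+K\tau S=\cO(\epsilon^{-1/2})\cdot\cO(\epsilon^{-1})+\cO(\epsilon^{-1/2})\cdot\epsilon^{-1/2}\cdot\cO(\epsilon^{-1/2})=\cO(\epsilon^{-3/2})$ for both mappings and Jacobians.

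The main obstacle is book-keeping the balance between the three batch-size scales (initial batches of order $\epsilon^{-1}$, per-iteration mini-batches of order $\epsilon^{-1/2}$, and epoch length $\tau=\epsilon^{-1/2}$) so that (i) $\frac{M}{4}-\frac{2\tau}{S}L_{f\circ g}\geq 0$ cancels the movement terms, and (ii) the cumulative initialization error $\tau\cdot\bigl(\sigma_g^2/|\Bkz|+\sigma_{g'}^2/|\Skz|\bigr)$ stays within $\cO(\epsilon)$; everything else is a direct reuse of the smooth-$f$ analysis already carried out in Section~\ref{sec:smooth-finite}.
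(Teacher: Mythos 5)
Your proposal is correct and follows essentially the same route as the paper's own proof: combine the smooth-case descent and gradient-mapping lemmas exactly as in Theorem~\ref{theorem:sarah-cvg-finite} to obtain~\eqref{eqn:smooth-before-expect}, substitute the SARAH/\textsc{Spider} error bounds of Lemma~\ref{lemma:sarah-mse-infinite} (now including the $\sigma_g^2/|\Bkz|$ and $\sigma_{g'}^2/|\Skz|$ initialization terms), telescope over the epoch with $b=2\tau$ absorbing the movement terms, bound the accumulated initialization error by $\tau\epsilon$ via the stated large batches, and count $K|\Bkz|+K\tau b$ samples. The book-keeping of the three batch scales matches the paper's choices, so nothing further is needed.
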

\begin{proof}
    We choose batch sizes that do not depend on~$k$.
	For the ease of notation, let $|\Bkz|=B$ and $|\Skz|=S$, and $|\Bki| = |\Ski|=b$ for $i=1,\ldots,\tau-1$. 
    Taking expectation of both sizes of~\eqref{eqn:smooth-before-expect} and applying Lemma~\ref{lemma:sarah-mse-infinite}, we get
\begin{eqnarray*}
\frac{1}{24M}\E\bigl[\bigl\|\cGki\bigr\|^2\bigr]
&\leq& \E\bigl[\Phi(\xki)\bigr]-\E\bigl[\Phi(\xkie)\bigr] - \frac{M-2L_{f\circ g}}{4M^2}\E\bigl[\bigl\|\tcGki\bigr\|^2\bigr] \\
&& +\, \frac{11}{8}\frac{L_f\sigma_g^2}{B} + \frac{3\ell_f\sigma_{g'}^2}{4L_g S} + \frac{11}{8}\frac{L_f\ell_g^2}{b} \sum_{r=1}^i\E\bigl[\bigl\|x^k_r - x^k_{r-1}\bigr\|^2\bigr] \\
&& +\, \frac{3\ell_f L_g}{4b}\sum_{r=1}^i \E\bigl[\bigl\|x^k_r - x^k_{r-1}\bigr\|^2\bigr].
\end{eqnarray*}
Summing up the above inequality for $i=0,\ldots,\tau-1$ and following similar steps as in the proof of Theorem~\ref{theorem:sarah-cvg-finite}, we have
\begin{eqnarray*}
    \frac{1}{24M}\sum_{i=0}^{\tau-1}\E\bigl[\bigl\|\cGki\bigr\|^2\bigr]
    &\leq& \E\bigl[\Phi(\xkz)\bigr]-\E\bigl[\Phi(x^{k+1}_0)\bigr] + \tau\left(\frac{11}{8}\frac{L_f\sigma_g^2}{B} + \frac{3\ell_f\sigma_{g'}^2}{4L_g S}\right)\\
    && - \left(\frac{M}{4} - \frac{2\tau}{b}L_{f\circ g}\right) \sum_{i=0}^{\tau-1} \E\bigl[\bigl\|\xkie-\xki\bigr\|^2\bigr] .
\end{eqnarray*}
The choices of $M\geq 4 L_{f\circ g}$ and $b=2\tau$ ensure $\frac{M}{4}-\frac{2\tau}{b}L_{f\circ g}\geq 0$,
and choices of $B=\frac{11L_f\sigma_g^2}{4\epsilon}$ and $S=\frac{3\ell_f^2\sigma_{g'}^2}{2L_g\epsilon}$ further ensure the constant term to be less than $\tau\epsilon$. Therefore
\begin{equation}
\label{eqn:smooth-expectation-sum-tau}
    \frac{1}{24M}\sum_{i=0}^{\tau-1}\E\bigl[\bigl\|\cGki\bigr\|^2\bigr]
    \leq \E\bigl[\Phi(\xkz)\bigr]-\E\bigl[\Phi(x^{k+1}_0)\bigr] + \tau\epsilon,
\end{equation}
which, upon summing over $k=1,\ldots,K$ and noting the choice of $x^{k^*}_{i^*}$, yields~\eqref{thm:sarah-cvg-infinite-0}.
The sample complexities can be calculated as $KB+K\tau b$.
\end{proof}
 
In Theorem~\ref{theorem:SARAH-cvg-Infinite}, the choices of~$\tau$ and batch sizes all depend on a fixed accuracy~$\epsilon$, which can be hard to determine in advance in many situations, and running more iterations will not improve the solution due to the existence of a $\cO(\epsilon)$ bias term in~\eqref{thm:sarah-cvg-infinite-0}.
Therefore, it would be desirable to develop an algorithm that adaptively chooses the batch sizes to keep improving the accuracy of the solution. Such an adaptive scheme is presented in the following theorem.
 
\begin{theorem}
\label{theorem:SARAH-cvg-Infinite-adp} 
Suppose Assumptions~\ref{assumption:f-cvx}, \ref{assumption:phi-lowerbound}, \ref{assumption:g-lip-infinite} and~\ref{assumption:SVR-PL-f-smooth} hold for problem~\eqref{eqn:composite-expect}. Let the estimates $\tgkz$, $\tJkz$, $\tgki$ and $\tJki$ in Algorithm~\ref{algo:SVR-PL} be given in~\eqref{eqn:g-J-spider-0}-\eqref{eqn:J-spider-infinite}, and we choose $M\geq 4 L_{f\circ g}$.
    Let $\{\epsilon_k\}_{k =1}^{\infty}$ be a sequence of positive real numbers. If we run each epoch of Algorithm~\ref{algo:SVR-PL} for $\tau_k=\epsilon_k^{-1/2}$ iterations, and set the batch sizes to be
\[
    \bigl|\Bkz\bigr|=\left\lceil\frac{11L_f\sigma_g^2}{4\epsilon_k}\right\rceil, \qquad
    \bigl|\Skz\bigr|=\left\lceil\frac{3\ell_f^2\sigma_{g'}^2}{2L_g\epsilon_k}\right\rceil, \qquad
    \bigl|\Bki\bigr|=\bigl|\Ski\bigr|=2 \left\lceil \epsilon_k^{-1/2}\right\rceil, 
\]
where $i=1,\ldots,\tau-1$,
then we have 
\begin{equation}
\label{thm:sarah-cvg-infinite-adp-1}
 	\frac{1}{20M}\E[\|\cG(x_{i^*}^{k^*})\|^2] 
    ~\leq~ \frac{\Phi(x^1_0) - \Phi(x^*)}{\sum_{k=1}^{K}\tau_k} + \frac{\sum_{k=1}^K\epsilon_k^{1/2}}{\sum_{k=1}^K\tau_k}
\end{equation} 
Specifically, setting $\epsilon_k = k^{-2}$ results in 
\begin{equation}
\label{thm:sarah-cvg-infinite-adp-2}
 	\frac{1}{20M}\E[\|\cG(x_{i^*}^{k^*})\|^2] 
    ~=~ \cO\left(\frac{\ln K}{K^2}\right).
\end{equation}
Consequently, given any $\epsilon>0$, we can set $K=\epsilon^{-1/2}$, which leads to an $\cO\bigl(\epsilon\ln\frac{1}{\epsilon}\bigr)$-stationary solution with total sample complexity of $\cO(\epsilon^{-3/2})$.
\end{theorem}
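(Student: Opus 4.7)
The plan is to carry over the per-epoch argument of Theorem~\ref{theorem:SARAH-cvg-Infinite} almost verbatim and then sum over epochs. The key structural observation is that, within a fixed epoch~$k$, the batch sizes prescribed in the theorem coincide with those of Theorem~\ref{theorem:SARAH-cvg-Infinite} upon substituting $\epsilon\mapsto \epsilon_k$ and $\tau\mapsto \tau_k$, and none of the ingredients used to derive~\eqref{eqn:smooth-expectation-sum-tau} (Lemma~\ref{lemma:sarah-mse-infinite} for the SARAH/\textsc{Spider} mean-square error, Lemma~\ref{lemma: descent-finite-sum} for the descent, Lemma~\ref{lemma:PG-finite-sum} bounding $\|\cG\|$ by $\|\widetilde{\cG}\|$ plus estimation errors) couples across epochs. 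Thus the same telescoping produces
\[
    \frac{1}{24M}\sum_{i=0}^{\tau_k-1}\E\bigl[\|\cGki\|^2\bigr]
    ~\leq~ \E[\Phi(x^k_0)] - \E[\Phi(x^{k+1}_0)] + \tau_k \epsilon_k.
\]

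Next I would sum this inequality over $k=1,\ldots,K$. The Lyapunov-type potential $\Phi(x^k_0)$ telescopes, and using Assumption~\ref{assumption:phi-lowerbound} to drop the $-\E[\Phi(x^{K+1}_0)]$ term by $-\Phi_*$ yields
\[
    \frac{1}{24M}\sum_{k=1}^K\sum_{i=0}^{\tau_k-1}\E\bigl[\|\cGki\|^2\bigr]
    ~\leq~ \Phi(x^1_0)-\Phi_* + \sum_{k=1}^K \tau_k\epsilon_k.
\]
The identity $\tau_k\epsilon_k=\epsilon_k^{-1/2}\cdot\epsilon_k=\epsilon_k^{1/2}$ converts the residual into $\sum_k\epsilon_k^{1/2}$. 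Because $x^{k^*}_{i^*}$ is drawn uniformly from the pool of $\sum_{k=1}^K\tau_k$ inner iterates, dividing by $\sum_k\tau_k$ gives exactly~\eqref{thm:sarah-cvg-infinite-adp-1} (with the slightly loosened $\tfrac{1}{20M}$ absorbing the ceiling-induced slack from rounding $\epsilon_k^{-1/2}$ to integers).

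For the specific schedule $\epsilon_k=k^{-2}$ one has $\tau_k=k$, so $\sum_{k=1}^K\tau_k = K(K+1)/2 = \Theta(K^2)$ and $\sum_{k=1}^K\epsilon_k^{1/2}=\sum_{k=1}^K\tfrac{1}{k}=\Theta(\ln K)$. Plugging into \eqref{thm:sarah-cvg-infinite-adp-1} immediately produces~\eqref{thm:sarah-cvg-infinite-adp-2}. Setting $K=\epsilon^{-1/2}$ then yields the $\cO(\epsilon\ln(1/\epsilon))$-stationarity claim. The sample complexity is obtained by summing per-epoch cost $|\Bkz|+\tau_k|\Bki| = \cO(\epsilon_k^{-1}) + \tau_k\cdot\cO(\epsilon_k^{-1/2}) = \cO(k^2)$ (and symmetrically for Jacobians), giving $\sum_{k=1}^K \cO(k^2)=\cO(K^3)=\cO(\epsilon^{-3/2})$.

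The only subtle point—more a bookkeeping issue than a genuine obstacle—is the sampling convention for the output: with $\tau_k$ varying across epochs, $x^{k^*}_{i^*}$ must be drawn uniformly from the \emph{total} pool of inner iterates (weighted by $\tau_k$), not by first sampling $k$ uniformly and then $i$ uniformly within that epoch. With this convention the sum-to-average conversion is immediate; otherwise one would need to reweight. Everything else is a direct lifting of the fixed-$\epsilon$ analysis, so no new inequalities are required.
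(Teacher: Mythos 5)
Your proposal is correct and follows essentially the same route as the paper's own proof: re-derive the per-epoch bound $\frac{1}{24M}\sum_{i=0}^{\tau_k-1}\E[\|\cGki\|^2]\leq \E[\Phi(x_0^k)]-\E[\Phi(x_0^{k+1})]+\tau_k\epsilon_k$, telescope over $k$, use $\tau_k\epsilon_k=\epsilon_k^{1/2}$, average over the uniformly chosen output, and then specialize to $\epsilon_k=k^{-2}$ with the same $K(K+1)/2$ and $1+\ln K$ bounds and $\cO(K^3)$ per-sample accounting. One tiny quibble: the constant $\frac{1}{20M}$ in the statement is \emph{stronger} than the $\frac{1}{24M}$ the argument yields (this mismatch is in the paper itself), so it cannot be explained as a ``loosening'' absorbing ceiling slack; otherwise your bookkeeping, including the remark on weighting the output by $\tau_k$, matches the paper.
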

\begin{proof}
    Note that the inequality~\eqref{eqn:smooth-expectation-sum-tau} still holds but with a specific set of parameters for each~$k$. 
Specifically, we have
\begin{eqnarray*} 
 	\frac{1}{24M}\sum_{i=0}^{\tau_k-1}\E\bigl[\|\cGki\|^2\bigr] 
    ~\leq~ \E\bigl[\Phi(x_0^k)\bigr] - \E\bigl[\Phi(x_0^{k+1})\bigr]+ \tau_k\epsilon_k , \qquad k=1,\ldots,K.
\end{eqnarray*}
Since we choose $\tau_k=\epsilon_k^{-1/2}$, it holds that $\tau_k\epsilon_k=\epsilon_k^{1/2}$. 
Summing this up over~$k$ gives 
\[
 	\frac{1}{24M}\sum_{k=1}^K\sum_{i=0}^{\tau_k-1}\E\bigl[\|\cGki\|^2\bigr] 
    ~\leq~  \Phi(x_0^1) - \Phi_* + \sum_{k=1}^K\epsilon_k^{1/2}.
\] 
Because $x^{k^*}_{i^*}$ is randomly chosen from $\bigl\{\xki\bigr\}_{i=0,\ldots,\tau-1}^{k=1,\ldots,K}$, we conclude~\eqref{thm:sarah-cvg-infinite-adp-1} holds.

If we choose $\epsilon_k = k^{-2}$, then $\tau_k=\epsilon^{-1/2}=k$ and we have 
$$\sum_{k=1}^K\tau_k = \half K(K+1)\quad\mbox{ and }\quad\sum_{k=1}^K\epsilon_k^{1/2} = \sum_{k=1}^k k^{-1} \leq 1 + \int_{1}^{K}z^{-1}dz = 1+\ln K.$$
Substituting the above relationships into~\eqref{thm:sarah-cvg-infinite-adp-1}
yields~\eqref{thm:sarah-cvg-infinite-adp-2}.
The total sample complexity for the $g_\xi$'s for running these $K$ epochs will be 
$$\sum_{k=1}^K\Bigl(|\Bkz| + \tau_k|\cB^k_1|\Bigr) =\cO\left(\sum_{k=1}^{K}\epsilon_k^{-1}\right) = \cO(K^3).$$
Similarly, the sample complexity for the Jacobians is also $\cO(K^3)$. 
Finally by setting $K = \epsilon^{-1/2}$, we will get an $\cO\bigl(\epsilon\ln \frac{1}{\epsilon}\bigr)$-stationary solution with total sample complexity of $\cO(\epsilon^{-3/2})$. 
\end{proof}

\section{Numerical experiments}
\label{sec:experiments}

In this section, we present the numerical experiments of our methods and compare with related methods (following the experiment setup in~\cite{tran2020stochastic}). For the ease of reference, we denote the algorithms in comparison as follows: 
\begin{itemize}
\item PL: the deterministic prox-linear algorithm described by \eqref{eqn:prox-linear}. 
\item S-PL: the mini-batch stochastic prox-linear algorithm in Algorithm \ref{algo:mini-batch-PL}. We note that S-PL coincides with SGN method in the concurrent work \cite{tran2020stochastic}. 
\item SVR-PL: Algorithm~\ref{algo:SVR-PL} where the SVRG estimator is applied and augmented with 1st-order correction technique.
\item Sarah-PL: Algorithm~\ref{algo:SVR-PL} using the SARAH estimator. Specifically, when $f$ is nonsmooth, Sarah-PL overlaps with SGN2 \cite{tran2020stochastic}. 
\item When $f$ is smooth, we also compare with the CIVR method \cite{C-SARAH} and the N-Spider method \cite{NestedSpider} for stochastic composite optimization.
\end{itemize}

\subsection{Nonsmooth nonlinear systems}
\label{subsec:nonsmooth-eq}

In this experiment, we solve the following nonsmooth problem:
$$\minimize_{x\in\R^n}\,\, \Phi(x):= \Big\|\frac{1}{N}\sum_{j=1}^{N}g_j(x)\Big\|_1 + \beta\|x\|_1,$$
where we want to find a sparse $x$ s.t. $\frac{1}{N}\sum_{j=1}^{N}g_j(x)$ is close to 0. Let $a_j\in\R^n$ be the $j$-th data point and $b_j\in\{-1,+1\}$ be the corresponding label. The function $g_j:\R^n\mapsto\R^4$ is defined as 
\begin{eqnarray}
	\label{eqn:exp-g-1}
	\!\!g_j(x) = \begin{bmatrix}
		1-\tanh(z_j)\\ \Big(1-\frac{1}{1+e^{-z_j}}\Big)^2\\ \log\left(1+e^{-z_j}\right) - \log\left(1+e^{-z_j-1}\right) \\ \log\left(1+(z_j-1)^2\right)
	\end{bmatrix}\quad\mbox{with}\quad z_j = b_j\cdot a_j^T x,\quad
\end{eqnarray} 
where each row of $g_j$ corresponds a certain type of binary classification loss, which can be viewed as a mixture of multiple models. We test our methods with the ijcnn1 dataset\footnote{https://www.csie.ntu.edu.tw/~cjlin/libsvmtools/datasets/binary.html} and the MNIST dataset\footnote{http://yann.lecun.com/exdb/mnist/}. For ijcnn1, we randomly extract $N=10000$ data points.
For MNIST, we extract $N=10000$ data points of two digits (Figure~\ref{fig:nonsmooth} shows the plots for ``1'' and ``9''). Specifically, each data point $a_j$ in the ijcnn1 dataset is 22 dimensional, we set $\beta = 0$ for ijcnn1 dataset, meaning that we do not require the solution to be sparse. For the MNIST dataset, each $a_j$ are 784 dimensional where most entries are 0. In this case, we set $\beta = N^{-1}$ as the sparsity penalty parameter. 

In the experiment, we test PL, S-PL/SGN, SVR-PL and Sarah-PL/SGN2 algorithms. 
For SVR-PL and Sarah-PL, we estimate $g$ and $g'$ with the mini-batch sizes suggested in Remark \ref{remark:dependent-batches}. Specifically, for SVR-PL, we choose $|\cS_i^k| = |\cB_i^k| = \lceil cN^{4/5}\rceil$. For Sarah-PL, we choose $\epsilon = 10^{-2}$, therefore we choose the large batches to be $|\cS_0^k| = |\cB_0^k|= \epsilon^{-2} = N$. For this finite sum problem we slightly revise the Sarah-PL such that $\tilde g_0^k = g(x_0^k)$ and $\tilde J_0^k = g'(x_0^k)$ and we  set $|\cS_i^k| = |\cB_i^k| = \lceil c\epsilon^{-3/2}\rceil$ for $i>0$. For both SVR-PL and Sarah-PL, $c$ is set to be $c=0.1$ after tuning from the set
$\{0.01,0.05,0.1,0.5,1,2\}$. 
For S-PL, the batch size is set to be $500$. 
For all methods, we select the best performing $M$ from the discrete range $\{1,5,10,20,40,60,80,100\}$. 
For ijcnn1 dataset, $M = 1$ works best for all methods;  For MNIST dataset, $M = 40$ works best for all methods. 
All methods start from the initial solution $x=0$.  

The results are shown in Figure \ref{fig:nonsmooth}, where each curve is plotted by averaging 5 rounds of running an algorithm. 
We can see that in terms of sample complexity, all stochastic methods significantly outperforms the deterministic PL algorithm.
Among the stochastic methods, SVR-PL and Sarah-PL perform better than S-PL (mini-batch only). 
Sarah-PL performs the best for this particular experiment, benefiting from 
using $|\cS_0^k| = |\cB_0^k|= N$ in the finite-sum setting, even though we do not have theory to support its advantage.

\begin{figure}[t]
	\centering 
	\includegraphics[width=0.45\linewidth]{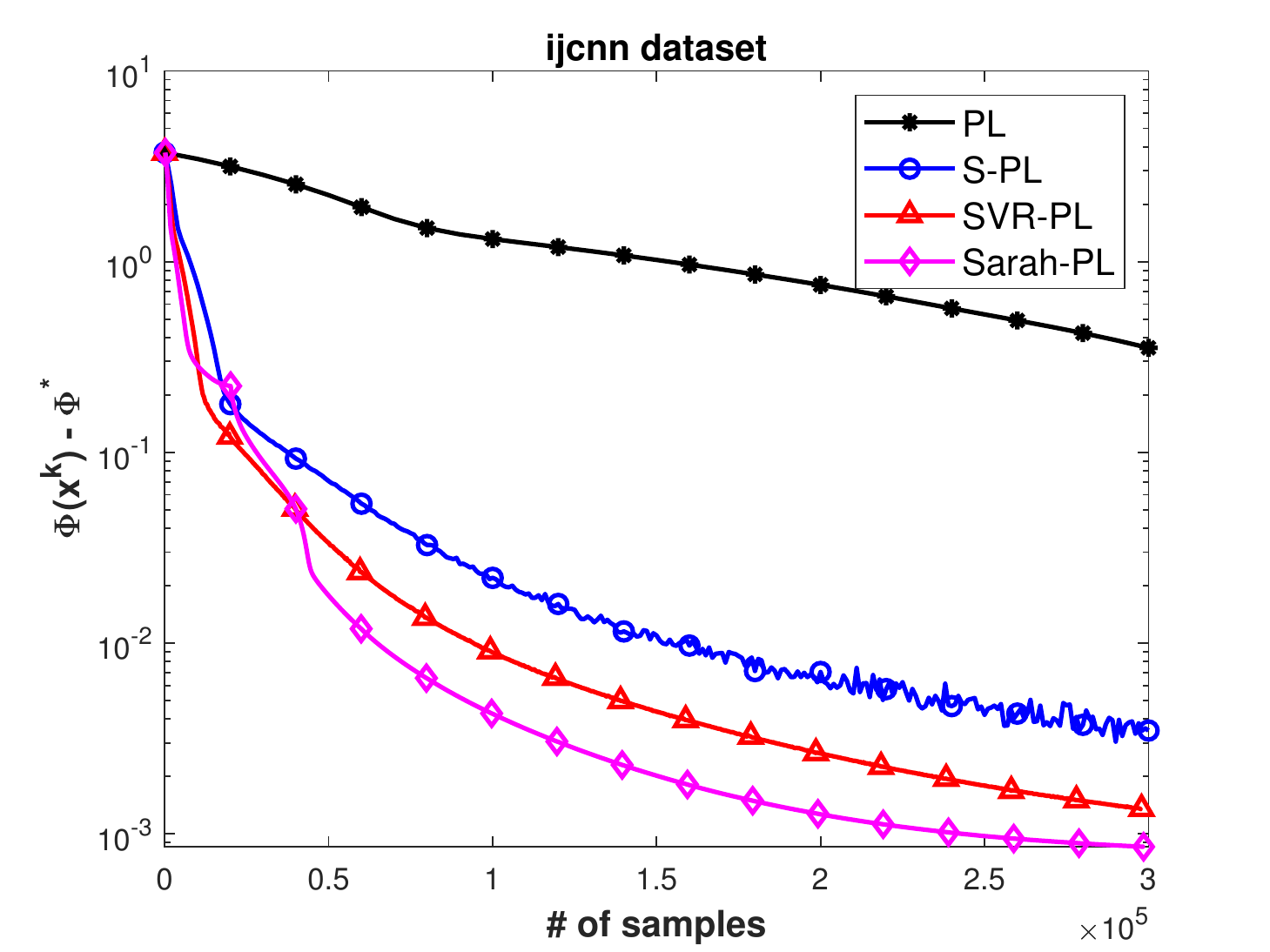} \quad
	\includegraphics[width=0.45\linewidth]{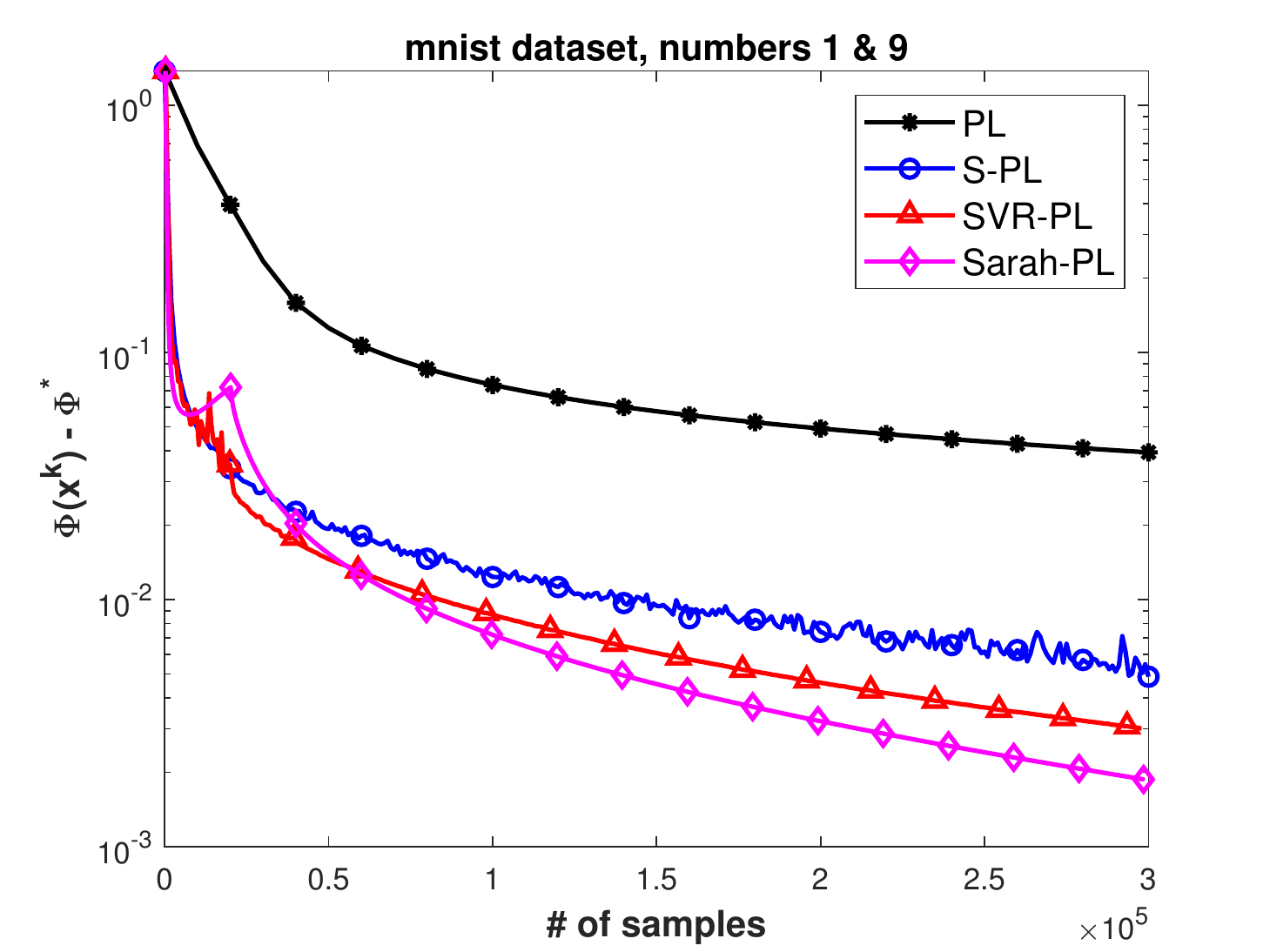} \\
	\includegraphics[width=0.45\linewidth]{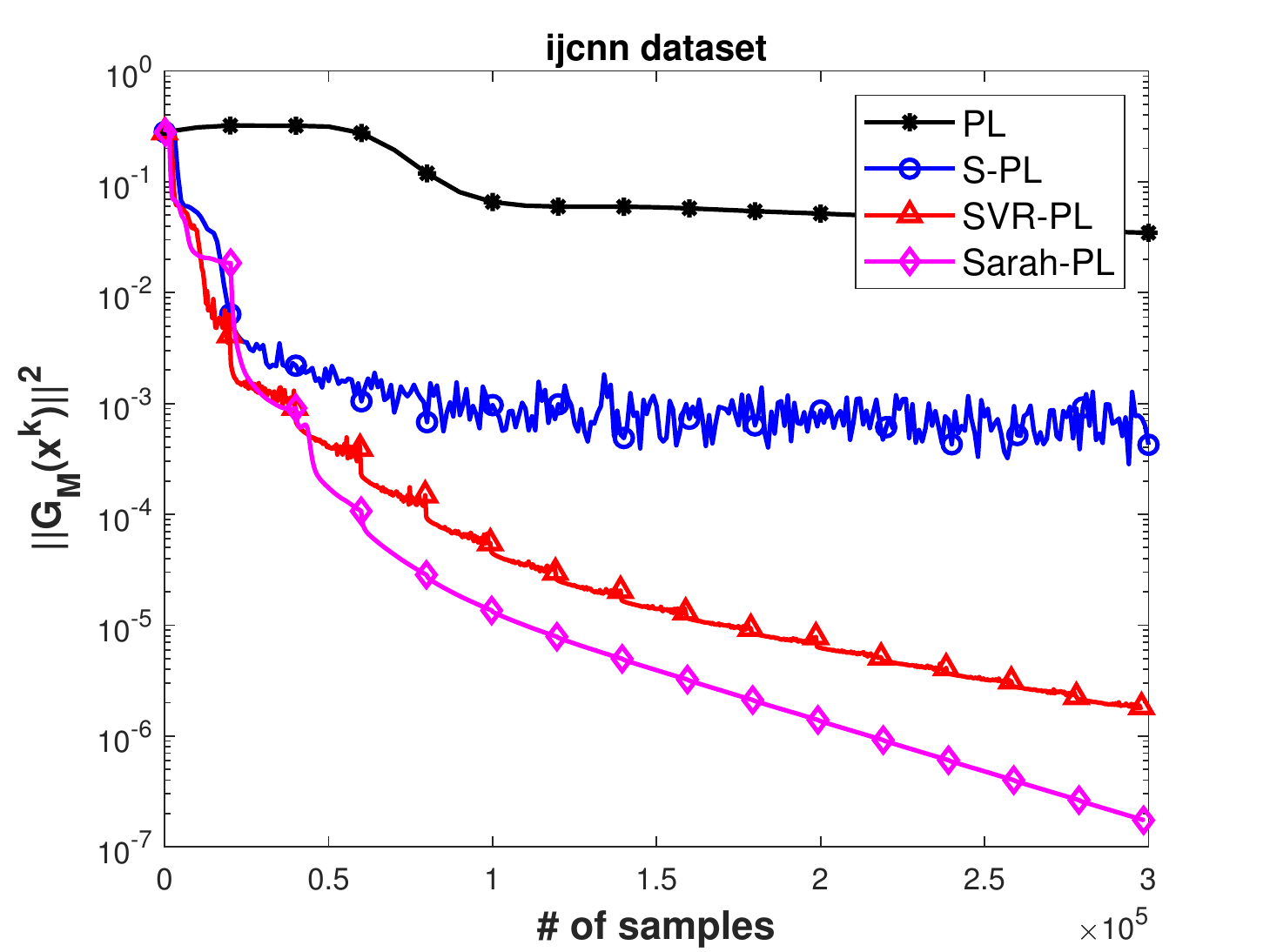} \quad 
	\includegraphics[width=0.45\linewidth]{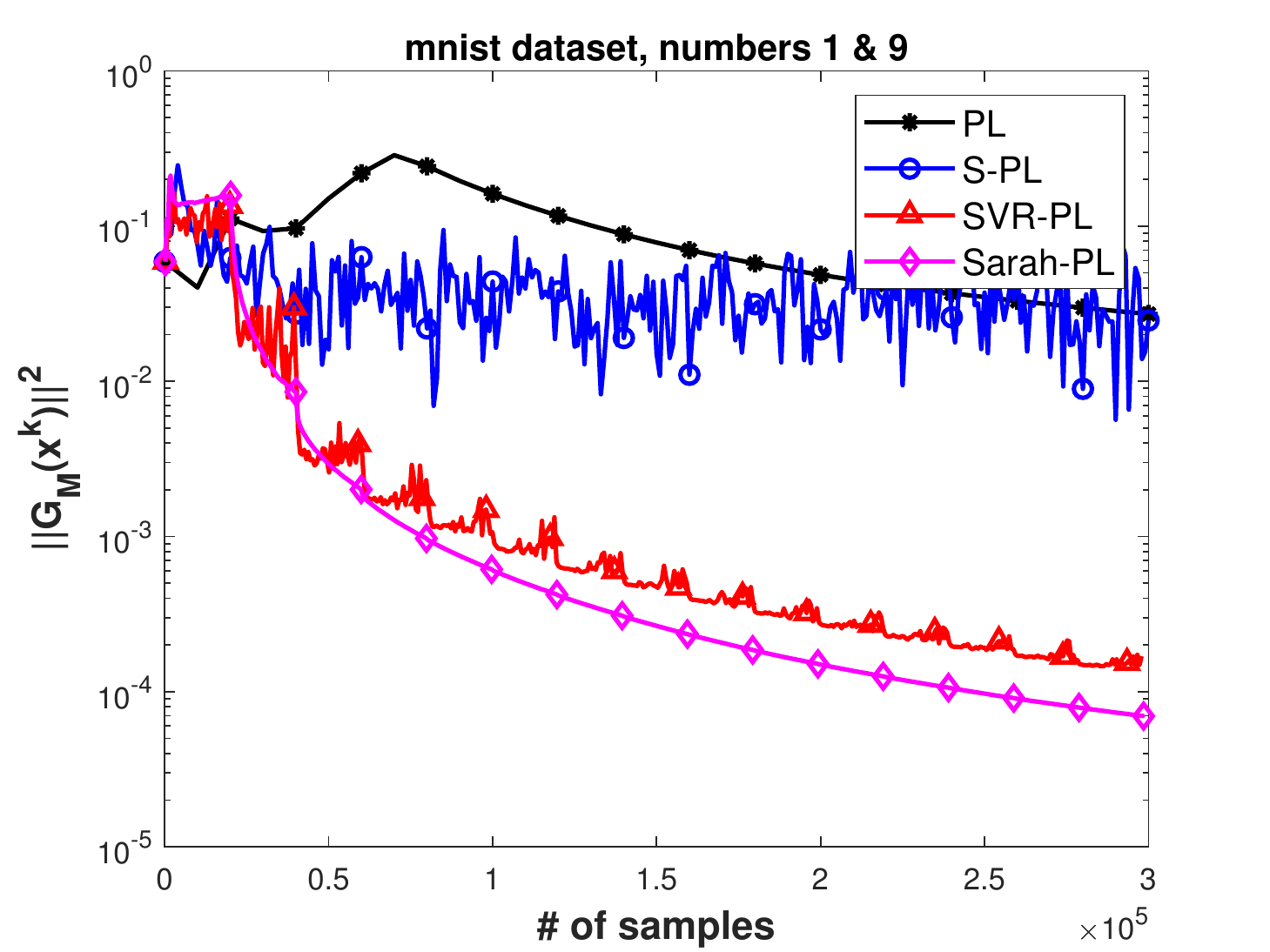}
	\caption{Comparison of the deterministic and stochastic prox-linear methods for nonsmooth composite optimization: the left column is for the ijcnn1 dataset and the right column is for the MNIST dataset (digits ``1'' and ``9'').
The first row shows the decrease of objective gap versus number of samples
(where $\Phi^*$ is approximated by collecting the lowest value after running all algorithms for a much longer time). The second row shows squared norm of the (exact) gradient mapping (computed off-line using the full dataset).}
	\label{fig:nonsmooth}
\end{figure}

\begin{figure}[t]
	\centering 
	\includegraphics[width=0.45\linewidth]{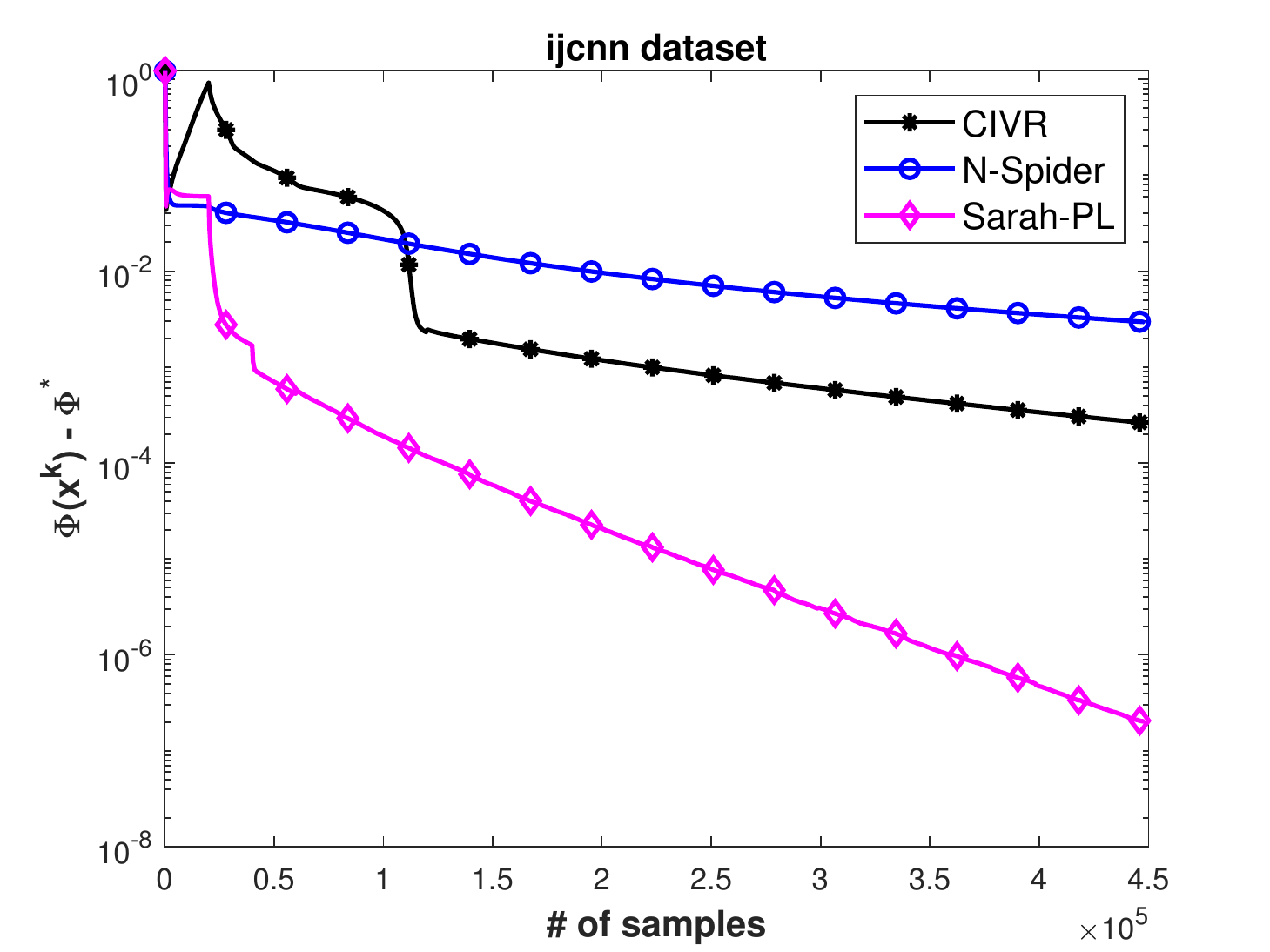} \quad 
	\includegraphics[width=0.45\linewidth]{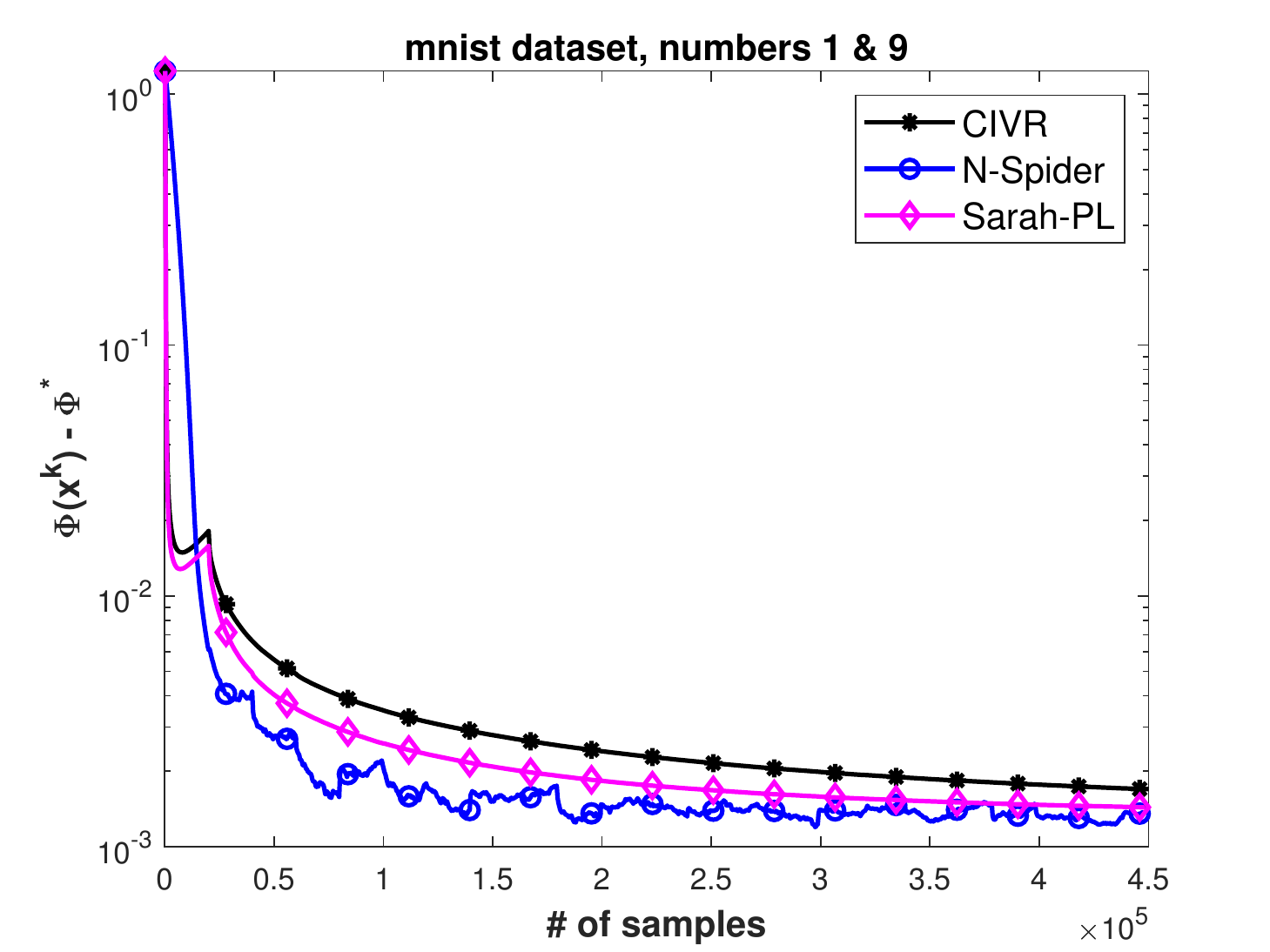} \\
	\includegraphics[width=0.45\linewidth]{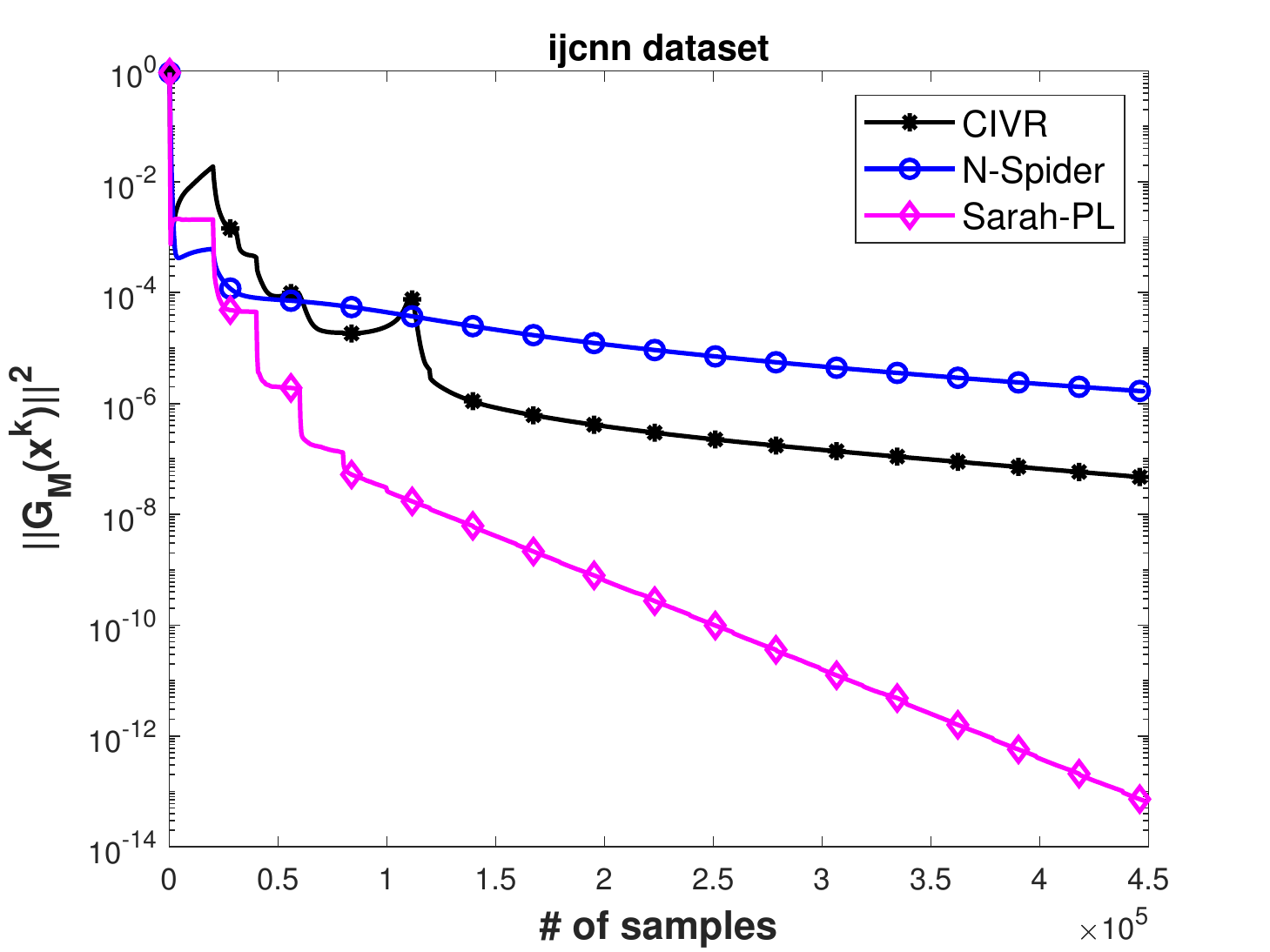} \quad 
	\includegraphics[width=0.45\linewidth]{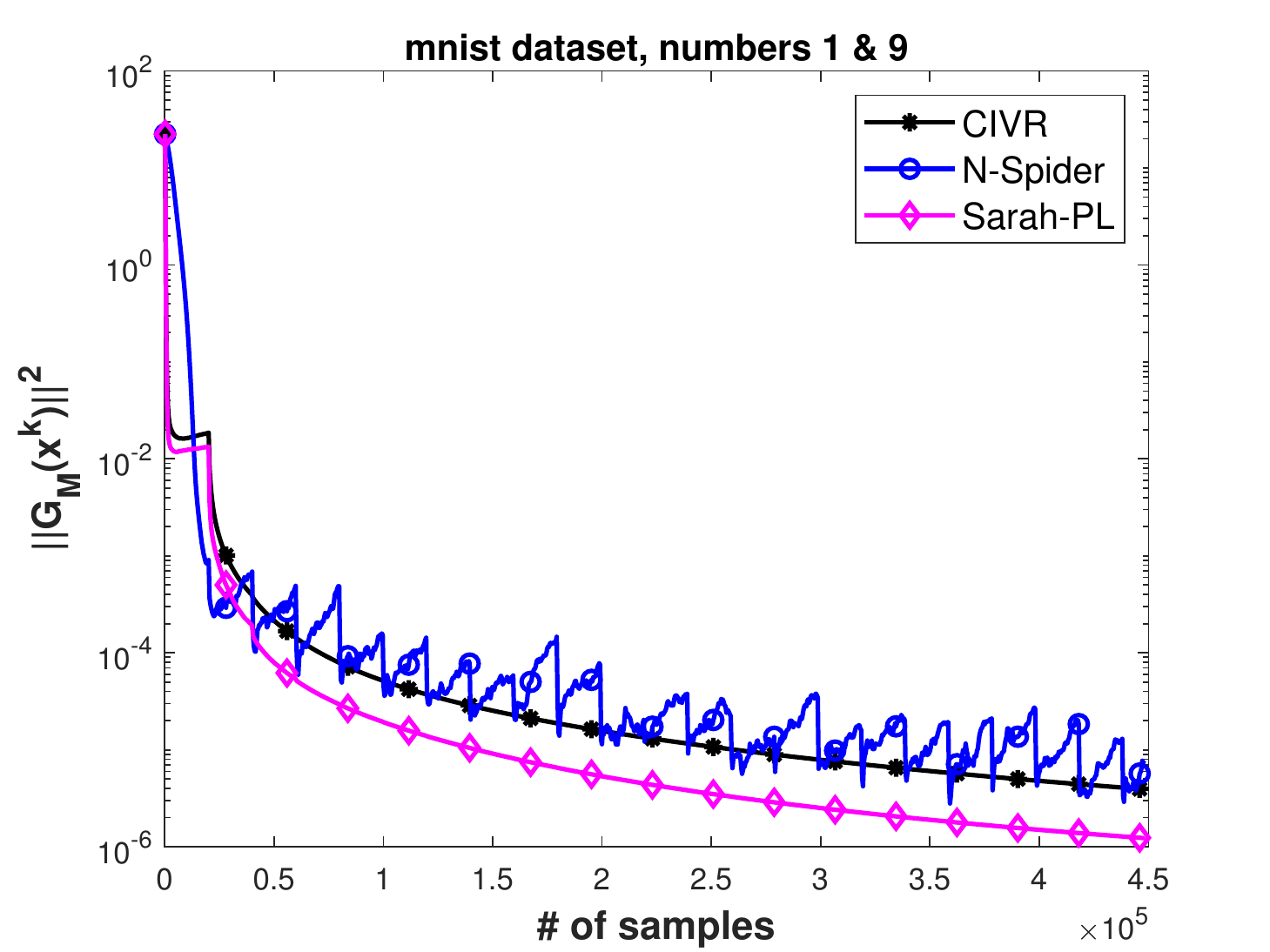}
	\caption{Comparison of stochastic variance-reduction methods for smooth composite optimization: the left column is for the ijcnn1 dataset and the right column is for the MNIST dataset (digits ``1'' and ``9'').
The first row shows objective gap versus number of samples
(where $\Phi^*$ is approximated by collecting the lowest value after running all algorithms for a much longer time). The second row shows squared norm of the (exact) gradient mapping (computed off-line using the full dataset).}
	\label{fig:smooth}
\end{figure}

\subsection{Smooth nonlinear systems}
\label{subsec:Exp-smooth}
In this experiment, we solve the following smooth problem:
$$\minimize_{x\in\R^n}\,\, \Phi(x):= \Big\|\frac{1}{N}\sum_{j=1}^{N}g_j(x)\Big\|^2,$$
where $g_j:\R^n\mapsto\R^4$ is defined by \eqref{eqn:exp-g-1}. 
We compare Sarah-PL, CIVR \cite{C-SARAH}, and the N-Spider algorithm \cite{NestedSpider}. For all three methods, the batch sizes are set to be $\bigl\lceil\sqrt{N}\bigr\rceil$. For N-Spider, we set $\epsilon^k_i = \frac{10}{1+k}$ for ijcnn1 and $\epsilon^k_i = \frac{10^{-2}}{1+k}$ for MNIST after some tuning. For both Sarah-PL and CIVR, their parameter $M$ or step size $\eta = M^{-1}$ are chosen from the set $\{0.1,0.5,1,5,10,20,40,60\}$. For ijcnn1, Sarah-PL works best with $M = 0.1$ and CIVR works best with $\eta = 0.5^{-1}$. For MNIST, Sarah-PL works best with $M = 10$ and CIVR works best with $\eta = 20^{-1}$. 

The results are shown in Figure \ref{fig:smooth}, where each curve is plotted by averaging 5 rounds of running an algorithm. 
For ijcnn1, Sarah-PL significantly outperforms CIVR and N-Spider, demonstrating the potential advantage of prox-linear algorithms over chain-rule based methods 
(both with variance reduction).
For MNIST, all three methods performs similarly.

\begin{figure}[t]
	\centering 
	\includegraphics[width=0.45\linewidth]{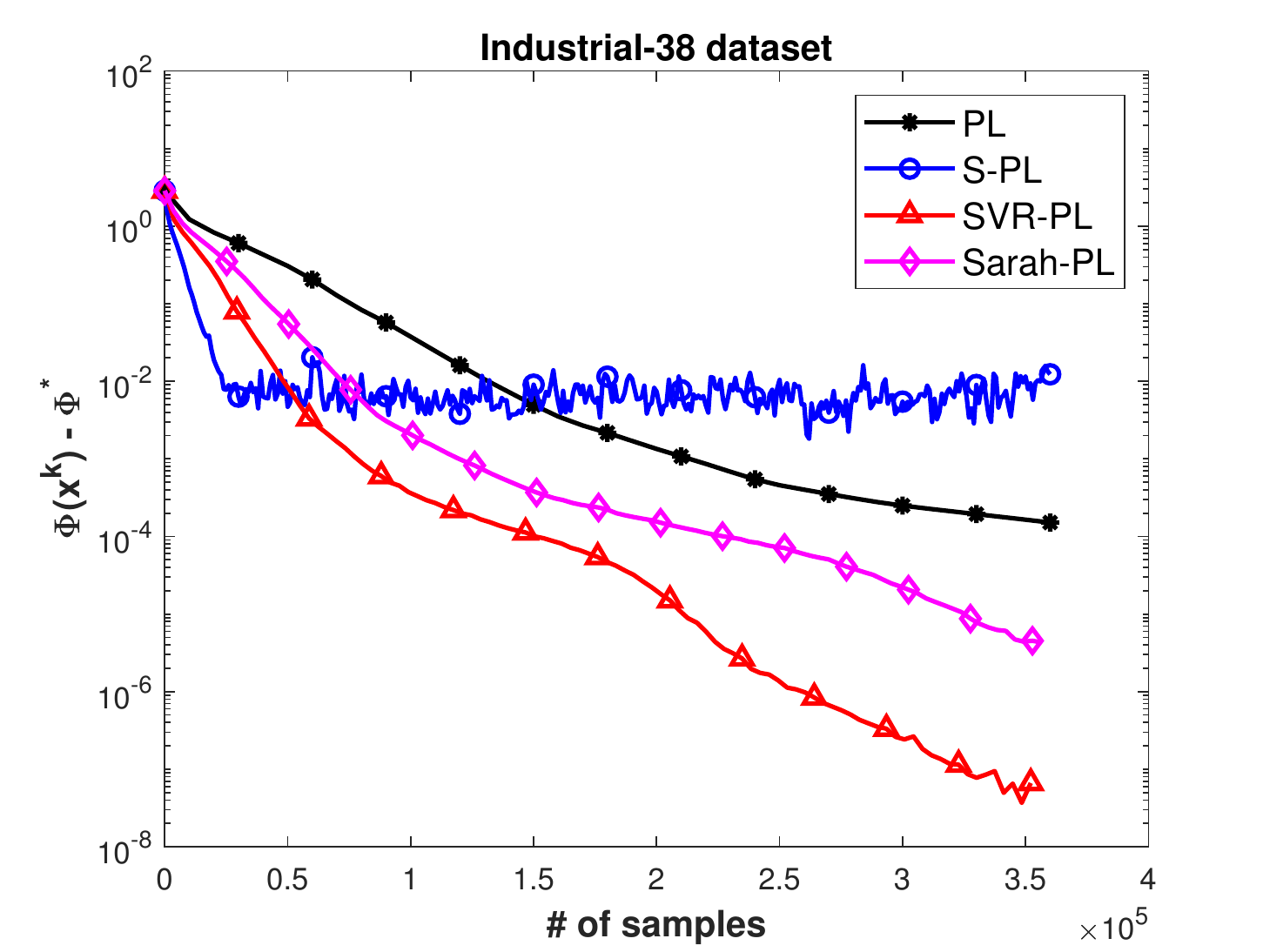} \quad
	\includegraphics[width=0.45\linewidth]{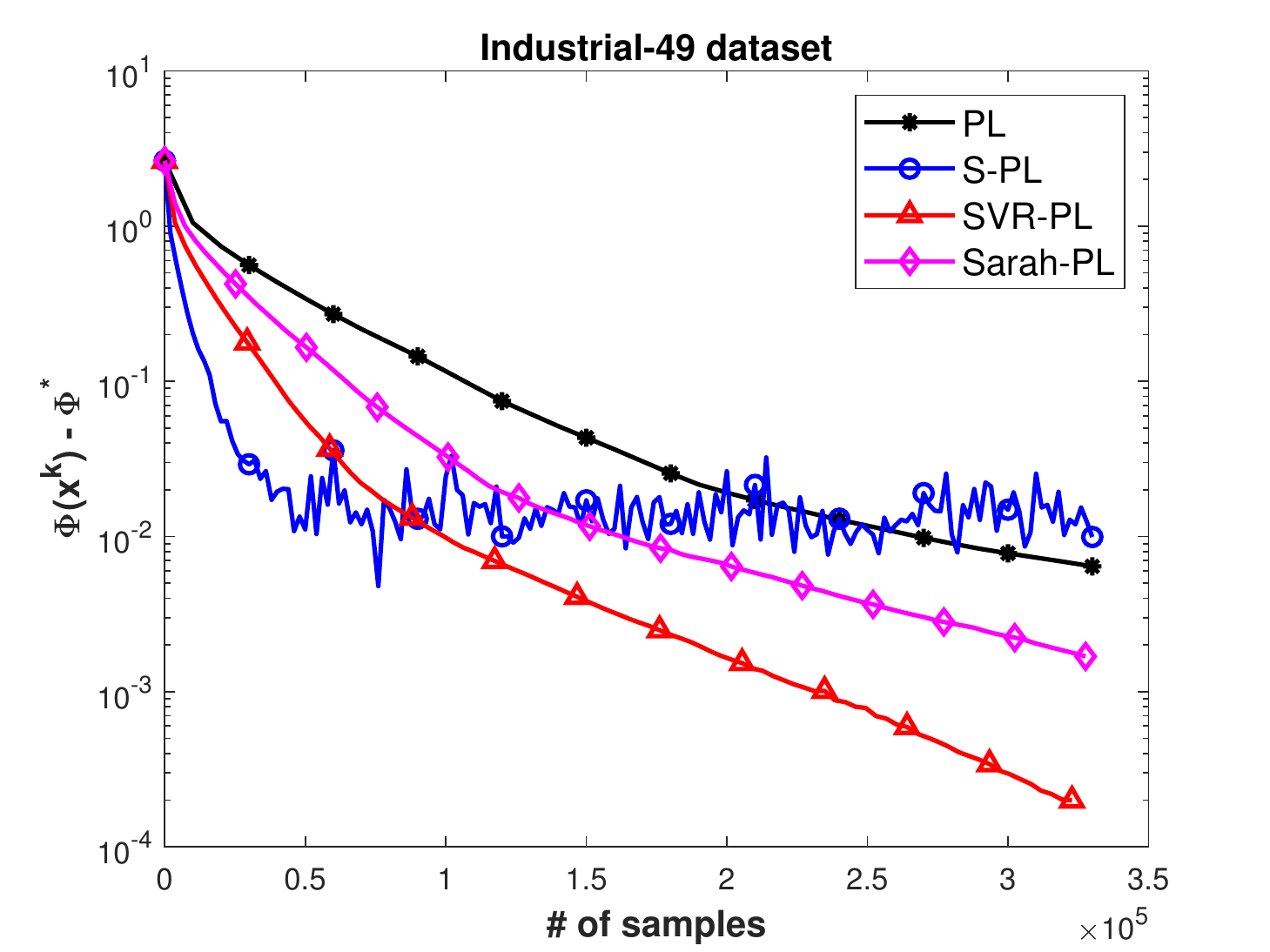} \\
	\includegraphics[width=0.45\linewidth]{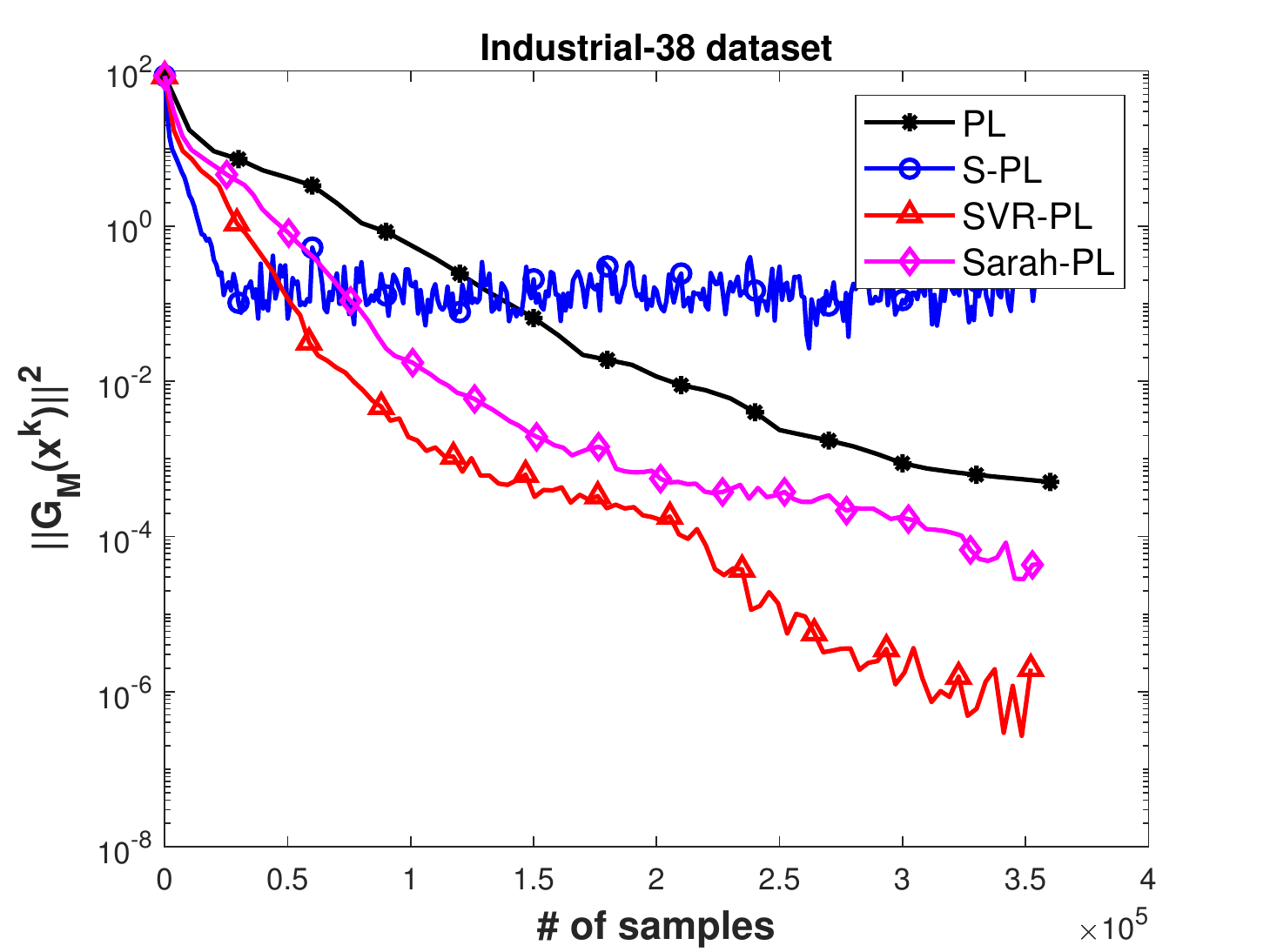} \quad 
	\includegraphics[width=0.45\linewidth]{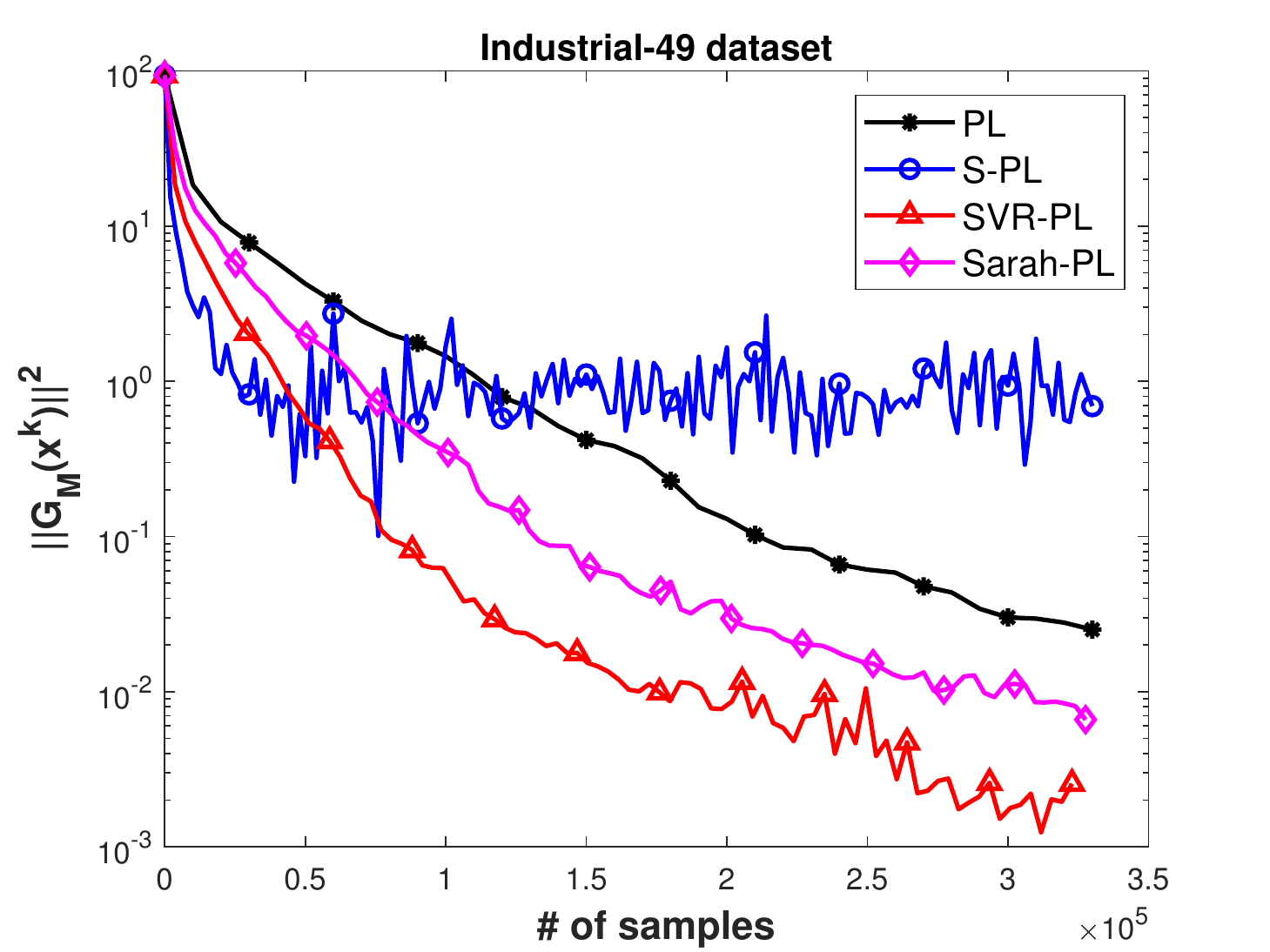} 
	\caption{Comparison of different prox-linear algorithms for constrained stochastic optimization through penalty formulation: the left column is for the Industrial-38 dataset and the right column is for the Industrial-49 dataset.
The first row shows objective gap versus number of samples
and the second row shows squared norm of the (exact) gradient mapping.}
	\label{fig:penalty}
\end{figure}

\subsection{Constrained stochastic optimization through penalty method}
\label{subsec:portfolio}
We consider a risk-sensitive portfolio optimization problem.  
Let $r_i\in\R^d$ be the vector of expected reward of~$d$ stocks at time period $i$, for $i = 1,2,...,N$. 
The problem of maximizing the expected total reward across~$N$ periods,
with a constraint on the conditional value at risk (CVaR) is formulated as
\cite{rockafellar2000optimization,lan2020algorithms}
\begin{eqnarray*}
	\label{prob:portfolio}
	\maximize_{x\in\Delta_d,\tau\in\R}~\Big(\frac{1}{N}\sum_{i=1}^{N}r_i\Big)^Tx\qquad\mathrm{s.t.} \qquad\tau + \frac{1}{\beta N}\sum_{i=1}^{N}\max\bigl\{-r_i^Tx-\tau,0\bigr\}\leq0, 
\end{eqnarray*}
where $\Delta_d:=\bigl\{x:x\geq0, \,\, \sum_{j=1}^{d}x_j = 1\bigr\}$ is the probability simplex. 
Using the exact penalty method (Section~\ref{sec:examples}), this problem can be reformulated as 
\[
\minimize_{x\in\Delta_d,\tau\in\R}~ -\Big(\frac{1}{N}\sum_{i=1}^{N}r_i\Big)^T x + \rho\cdot\max\Big\{0,\tau + \frac{1}{\beta N}\sum_{i=1}^{N}\max\{-r_i^Tx-\tau,0\}\Big\} .
\]
Following the suggestion of \cite{tran2020stochastic}, the nonsmooth term $\beta^{-1}\cdot\max\{-r_i^Tx-\tau,0\}$ is smoothed as $\frac{1}{2\beta}\left(\sqrt{(r_i^Tx+\tau)^2+\gamma^2} - r_i^Tx - \tau - \gamma\right)$.

In this experiment, we test different methods on the Industrial-38 and the Industrial-49 dataset\footnote{http://mba.tuck.dartmouth.edu/pages/faculty/ken.french/data\_library.html}. From each dataset, $N=10000$ data points are extracted for the experiment. The parameters in the problem formulation are set to be $\beta = 10^{-1}$, $\rho = 5$, and $\gamma = 10^{-3}$. For algorithmic parameters, their tuning process is the same as that described in Section \ref{subsec:nonsmooth-eq}. The following results are obtained. 
In Industrial-38 dataset, $M = 40, 60, 40, 60$ works best for PL, S-PL,SVR-PL and Sarah-PL respectively; For S-PL, the batch size is chosen to be 1000; For both SVR-PL and Sarah-PL, the batch sizes are the same as those in Section \ref{subsec:nonsmooth-eq} with $c=2$. 

Figure \ref{fig:penalty} shows the results, again averaged over 5 runs of each algorithm. 
In this experiment, SVR-PL performs the best. It is worth noting that 
although S-PL has fast convergence in the initial stage, it stagnates at a 
relatively high error floor. 
SVR-PL and Sarah-PL reach higher accuracy due to their advanced variance-reduction schemes.

\section{Discussions}
\label{sec:discussions}

In this paper, we have mostly relied on the SARAH/\textsc{Spider} estimators for variance reduction, except that for the nonsmooth and finite-average case (Section~\ref{sec:nonsmooth-finite}) we used a modified SVRG estimator with first-order correction. 
If we use the SVRG type of estimators for other cases, then the resulting sample complexities are suboptimal. More specifically, when~$f$ is smooth, we have derived sample complexity of $\cO(N+N^{2/3}\epsilon^{-1})$ and $\cO(\epsilon^{-5/3})$ for the cases of~$g$ being a finite average and a general expectation respectively. They are inferior compared to the $\cO(N+\sqrt{N}\epsilon^{-1})$ and $\cO(\epsilon^{-3/2})$ bounds using the SARAH/\textsc{Spider} estimators obtained in Sections~\ref{sec:smooth-finite} and~\ref{sec:smooth-expect}.

The sample complexities of our methods for smooth~$f$ are the same as the stochastic gradient descent type of methods that use the chain-rule to construct gradient estimators \cite{C-SARAH,NestedSpider}. However, it is often observed that algorithms based on proximal mappings can be more efficient than those based on gradients in practice (see e.g., \cite{AsiDuchi2019Truncated,AsiDuchi2019StoProxPoint,DamekDima2019ModelBased}). 
Here we shed more light from a theoretical perspective. 
Consider the least squares problem of minimizing $F(x):=\frac{1}{2}\|g(x)\|^2$, where $g(x) = \frac{1}{N}\sum_{i=1}^Ng_i(x)$. Given any SARAH/\textsc{Spider} variance reduced estimator $\tilde{g}^k_i$ and $\tilde{J}^k_i$, our proxi-linear scheme construct the update as 
$$x^k_{i+1} = x^k_i - \big(M\cdot I +  [\tilde{J}^k_i]^T\tilde{J}^k_i\big)^{-1}[\tilde{J}^k_i]^T\tilde{g}^k_i,$$
which is a \emph{damped Gauss-Newton} iteration.
Note that if $g(x^*) \approx 0$, then $\nabla^2F(x^*) \approx [g'(x^*)]^Tg'(x^*)$ (see \cite{NocedalWright2006book}). This indicates that the Gauss-Newton matrix $[\tilde{J}^k_i]^T\tilde{J}^k_i$ becomes a better approximation of the Hessian $\nabla^2F(x^k_i)$ as $x^k_i$ moves closer to $x^*$.
Therefore, prox-linear based methods (Gauss-Newton especially) can take advantage of the second-order information whenever possible, while chain-rule based gradient methods cannot.

It is worth noting that both SVRG and SARAH/\textsc{Spider} schemes need a large sample batch at the beginning of each epoch, and slightly smaller sample batches in later iterations. However, under many circumstances it is more preferable if constant small batches are taken in each iteration. Recently, a STOchastic Recursive Momentum (STORM) variance reduction scheme that takes one sample per iteration has been proposed to solve smooth stochastic programming problem \cite{cutkosky2019momentum}, and has been extended to a distributionally robust optimization (DRO) problem of form \eqref{eqn:composite-expect} with $f$ being smooth \cite{qi2020practical}. An optimal $\cO(\epsilon^{-3/2})$ sample complexity is achieved in these works. However, we were not able to extend the STORM technique to problems with nonsmooth $f$. Deriving an algorithm with (constant) small mini-batch sizes for problems \eqref{eqn:composite-finite} and \eqref{eqn:composite-expect} with nonsmooth~$f$ remains open.

\section*{Acknowledgments}
The authors thank Dmitriy Drusvyatskiy for contributing the example of truncated stochastic gradient method in Section~\ref{sec:examples}.
We are also grateful to the two anonymous referees for their helpful comments and suggestions.

\bibliographystyle{plain}
\bibliography{SVRPL}

\end{document}